\newtheorem{theorem}{Theorem}
\theoremstyle{definition}
\newtheorem{proposition}[theorem]{Proposition}
\newtheorem{corollary}[theorem]{Corollary}
\theoremstyle{definition}
\newtheorem{example}[theorem]{Example}
\newtheorem{remark}[theorem]{Remark}
\newcommand{\eps}{\varepsilon}
\newcommand{\R}{\mathbb{R}}
\newcommand{\one}{\mathbbm{1}}
\DeclareMathOperator{\conv}{conv}
\DeclareMathOperator{\vertex}{vert}
\DeclareMathOperator{\inter}{int}
\newcommand*{\colonequals}{\mathrel{\vcenter{\baselineskip0.5ex%
      \lineskiplimit0pt\hbox{\scriptsize.}\hbox{\scriptsize.}}}=}
\renewcommand{\P}{\mathcal{P}}
\newcommand{\W}{\mathcal{W}}
\newcommand{\V}{\mathcal{V}}
\newcommand{\U}{\mathcal{U}}
\newcommand{\figurecoordinates}
{
	\coordinate (SW) at(-5,-3);
	\coordinate (NE) at(3.7,5.7);
	\coordinate (NULL) at (0,0);
	\coordinate (A1) at (-5,-1);
	\coordinate (A2) at (3,-1);
	\coordinate (AA1) at (-5,-2);
	\coordinate (AA2) at (3,-2);
	\coordinate (B1) at (-2,5);
	\coordinate (B2) at (-2,-3);
	\coordinate (BB1) at (-4,5);
	\coordinate (BB2) at (-4,-3);
	\coordinate (C1) at (1/2,-3);
	\coordinate (C2) at (-5,2.5);
	\coordinate (CC1) at (-2,-3);
	\coordinate (CC2) at (-5,0);
	\coordinate (D1) at (3,-2.5);
	\coordinate (D2) at (-4.5,5);
	\coordinate (DD1) at (3,-2);
	\coordinate (DD2) at (-4,5);
	\coordinate (E1) at (-7/4,-3);
	\coordinate (E2) at (1/4,5);
	\coordinate (EE1) at (-11/4,-3);
	\coordinate (EE2) at (-3/4,5);
	\coordinate (F1) at (-7/2,-3);
	\coordinate (F2) at (3,7/2);
	\coordinate (FF1) at (-4,-3);
	\coordinate (FF2) at (3,4);
	\coordinate (U1) at (-3/2,  -1);
	\coordinate (UU1) at (-3,  -2);
	\coordinate (V1) at (-1/2, -2);
	\coordinate (U2) at (-2  ,-1/2);
	\coordinate (UU2) at (-4  ,-1);
	\coordinate (V2) at (-17/8,-1/2);
	\coordinate (U3) at (3/2,-1);
	\coordinate (UU3) at (3,-2);
	\coordinate (V3) at (9/4,-3/2);
	\coordinate (U4) at (-2,5/2);
	\coordinate (UU4) at (-4,5);
	\coordinate (V4) at (-3,15/4);
	\coordinate (U5) at (-7/10,6/5);
	\coordinate (UU5) at (-7/5,12/5);
	\coordinate (V5) at (-21/20,9/5);
	\coordinate (U6) at (-5/4,-1);
	\coordinate (UU6) at (-5/2,-2);
	\coordinate (U7) at (0,1/2);
	\coordinate (UU7) at (0,1);
	\coordinate (U7) at (0,1/2);
	\coordinate (UU7) at (0,1);
	\coordinate (V7) at (0,3/4);
	\coordinate (U8) at (-7/6,-2/3);
	\coordinate (UU8) at (-7/3,-4/3);	
	\coordinate (V8) at (-7*4/17,-4*4/17);
}
\newcommand\SingleLine[8]{%
    \path(#5)--(#6)coordinate[at start](h1)coordinate[at end](h2);
    \draw[#7]($(h1)!#1!#2:(h2)$)-- node [auto=left] {#8} ($(h2)!#3!-#4:(h1)$); 
    }
\newcommand\DoubleLine[9]{%
    \path(#5)--(#6)coordinate[at start](h1)coordinate[at end](h2);
    \draw[#7]($(h1)!#1!#2:(h2)$)-- node [auto=left] {#8} ($(h2)!#3!-#4:(h1)$); 
    \draw[#9]($(h1)!#1!-#2:(h2)$)-- node [auto=right] {#8} ($(h2)!#3!#4:(h1)$);
    }
\begin{document}
\title{Approximate Vertex Enumeration}

\author{Andreas Löhne\textsuperscript{1}}

\footnotetext[1]{Friedrich Schiller University Jena, Department of
    Mathematics, 07737 Jena, Germany,
    andreas.loehne@uni-jena.de
}
\maketitle

\begin{abstract}
The problem to compute the vertices of a polytope given by affine inequalities is called vertex enumeration. 
The inverse problem, which is equivalent by polarity, is called the convex hull problem. 
We introduce `approximate vertex enumeration' as the problem to compute the vertices of a polytope which is close to the original polytope given by affine inequalities. 
In contrast to exact vertex enumerations, both polytopes are not required to be combinatorially equivalent.
 
Two algorithms for this problem are introduced. 
The first one is an approximate variant of Motzkin's double description method. 
Only under certain strong conditions, which are not acceptable for practical reasons, we were able to prove correctness of this method for polytopes of arbitrary dimension. 
The second method, called shortcut algorithm, is based on constructing a plane graph and is restricted to polytopes of dimension $2$ and $3$. 
We prove correctness of the shortcut algorithm. 
As a consequence, we also obtain correctness of the approximate double description method, only for dimension $2$ and $3$ but without any restricting conditions as still required for higher dimensions. 
We show that for dimension $2$ and $3$ both algorithm remain correct if imprecise arithmetic is used and the computational error caused by imprecision is not too high. 
Both algorithms were implemented. 
The numerical examples motivate the approximate vertex enumeration problem by showing that the approximate problem is often easier to solve than the exact vertex enumeration problem.

It remains open whether or not the approximate double description method (without any restricting condition) is correct for polytopes of dimension $4$ and higher. 

\medskip
\noindent
{\bf Keywords:} convex hull computation, vertex enumeration, robustness, stability, computational geometry, set optimization, polytope approximation, imprecise arithmetic
\medskip

\noindent
{\bf MSC 2010 Classification: 52B11, 52B10, 68U05, 65D18, 90C29}
 \end{abstract}

\section{Problem formulation and introduction}

Let $P$ be an H-polytope (i.e.\ a bounded polyhedron represented by affine inequalities) with zero in its interior. 
Setting $\one=(1,\dots,1)^T$, $P$ can be expressed by a matrix $A \in \R^{m\times d}$ as
\begin{equation*}
	P = \{x \in \R^d \mid A x \leq \one\}.
\end{equation*}
For some  tolerance $\eps \geq 0$ we define
\begin{equation*}
	(1+\eps) P = \{ x \in \R^d\mid A x \leq (1+\eps) \one\}.
\end{equation*} 

The goal is to construct iteratively an {\em ($\eps$-)approximate V-representation}, that is, a finite set $\V = \{v_1,\dots,v_k\} \subseteq \R^d$ such that the convex hull $Q \colonequals \conv \V$ of $\V$ satisfies
\begin{equation}\label{eq_approx}
	P \subseteq Q \subseteq (1+\eps) P.
\end{equation}
For $\varepsilon=0$ we obtain a V-represention of $P$. 
Of course, any V-representation of $P$ or $(1+\eps)P$ is also an approximate V-representation of $P$. 
Allowing more approximate V-representations by enlarging the tolerance $\eps \geq 0$ can reduce the computational time and the complexity of the result (less vertices in $\V$) as will be demonstrated by numerical examples. 
In the present approach, the combinatorial structure (more precisely, the face lattice) of the V-polytope $Q$ can be (and usually is) different from the combinatorial structure of the given H-polytope $P$. 
Thus our problem setting does not aim to get any combinatorial information (like a facet-vertex incidence list) of $P$. 
Instead we only obtain a V-polytope $Q$ that approximates the H-polytope $P$ by a prescribed tolerance. 
Potential applications of this problem setting and the presented methods can be seen in the field of approximation of convex sets, see e.g.\ \cite{Bronstein08}, which includes solution methods for vector and set optimization problems, see e.g.\ \cite{5pp}. 

Vertex enumeration algorithms are usually not ``stable'' with respect to imprecise computations, for instance, caused by floating point arithmetic. 
To get a flavor of possible computational problems in geometry caused by inexact arithmetic, even in the plane, the reader is referred to \cite{Kettner04}.
Also in Figure \ref{fig_fail} below, we see that imprecision can lead to results which are not even approximations of the correct ones.
We aim to make aware with this article that floating point implementations of the vertex enumeration or convex hull problem do not evidently compute correct results. Even more important is to keep in mind that for such methods, which are frequently used in practice, e.g. \cite{qhull_software, bt, bt-paper}, there is not even any evidence that they compute approximations of the correct results (except for $2$-dimensional problems, see Section \ref{sec_lit}).    

For the latter issue there is a significant difference between polytopes of dimension up to $3$ and those of higher dimension. While we are able to provide (practically relevant) correct methods for the approximate vertex enumeration problem up to dimension $3$, this question remains open for higher dimensions. Since our proof technique is based on the planarity of the vertex-edge graph of a polytope, it cannot be applied to polytopes of dimension $d \geq 4$. The difference between 3- and 4-polytopes (i.e.\ polytopes of dimension $3$ and $4$) can also be observed from a theoretical point of view.
For dimension larger than 3, i.e.\ beyond the sphere of validity of Steinitz' theorem, arbitrarily small local changes of the data can cause global changes of the combinatorics, see e.g.\ the examples in \cite[Section 4]{Ziegler95}. 
The double description method, see e.g. \cite{ddm}, constructs iteratively a sequence of polytopes by adding inequalities. 
In each iteration the facet-vertex incidence information is used. 
The incidence list is updated only locally in each step after adding an inequality. 
Thus global changes in the combinatorics can result in invalid incidence information. 
Moreover, the universality theorem of Richter-Gebert \cite{Richter-Gebert96} states that the realization space (i.e.\ the space of all polytopes being combinatorially equivalent) of a 4-polytope can be ``arbitrarily bad'' \cite{RicZie95}. 
This has several consequences, in particular, all algebraic numbers are needed to realize 4-polytopes \cite{RicZie95}. 
In contrast to this, due to Steinitz' theorem, see e.g. \cite{Ziegler95}, 3-polytopes are realizable by integral coordinates. These fundamental differences between 3- and 4-polytopes, the mentioned lack of evidence, and the fact that there are only limited options of testing a result for plausibility show that floating point implementations of geometric algorithms for $d\geq 4$ should be treated with caution.

This article is organized as follows. 
In Section \ref{basic_cut} we discuss an extension of the basic cutting scheme of the exact double description method. 
The main idea is that the classical cutting hyperplane is replaced by the space between two parallel hyperplanes. 
We show that certain ingeniuous extensions of the double description method fail. 
Only under strong and impracticable assumptions, these ideas lead to a correct algorithm. 
We show by examples that the assumptions cannot be omitted in the present approach. 
We close this section with the guess that a straightforward extension of the correctness results for exact vertex enumeration methods to approximate methods is not possible.
In Section \ref{sec_addm} we formulate the approximate double description method. 
Section \ref{sec_23} is devoted to polytopes of dimension $2$ and $3$.
We introduce the shortcut algorithm and prove its correctness. 
As a consequence we obtain also correctness of the approximate double description method, without any further assumptions but for dimension $2$ and $3$ only.
In Section \ref{sec_imprec} we show that the use of imprecise arithmetic maintains the correctness results from the previous section if the imprecision is not too high. Some numerical results are presented in Section \ref{sec_num}.
They are used to compare the two methods and to demonstrate some benefits of the approximate vertex enumeration. We close with some conclusions and open questions.

%
%

\section{Discussion of related work} \label{sec_lit}

Concepts like {\em robustness} and {\em stability}, see e.g. \cite{Fortune89,HofHopKar88,Sugihara94, Sugihara00} were introduced and studied in order to ``verify'' (geometrical) algorithms performed with imprecise arithmetic. These concepts can be seen as generalizations of the classical notion of correctness of an algorithm. If a geometric algorithm is performed with imprecise arithmetic, correct results (in the sense of exact results for the original problem) cannot be expected. This is one reason why the notion of correctness needs to be generalized. The approach in this paper is a different one. We do not need a generalized concept of correctness. Instead we adapt the problem setting:
	\begin{itemize}
		\item First we define an approximate problem, where the goal is to solve the problem only approximately.
		\item Secondly, we look for correct algorithms (in the classical sense) solving this (weaker) problem. The arithmetic used here (such as exact or floating point) is seen as a convention which is part of the algorithm. Then the only goal is to show correctness of the method, that is: the algorithm is well-defined for any valid input and its output is an exact solution of the approximate problem. 
	\end{itemize}
Despite of this different approach, the goal to ``verify'' algorithms performed with imprecise arithmetic is the same. Moreover, our approach is related to {\em robustness} and {\em stability} in the following sense: For geometric algorithms in $\eps$-arithmetic Fortune \cite{Fortune89} generalizes the concept of {\em correctness} as follows: First, such an algorithm is required to be correct (in the classical sense) if implemented in exact arithmetic. Secondly, it is called {\em robust} if it always produces a result that is the correct one for some perturbation of its input; it is called {\em stable} if, in addition, the perturbation is small. Assume that an H-polytope $P$ is given and consider $\V=\{v_1,\dots,v_k\} \subseteq \R^d$ satisfying $P=\conv \V$ as the ``correct'' result in Fortune's definition. Solving the approximate vertex enumeration problem for $P$ and some prescribed tolerance $\eps>0$ using floating point arithmetic under the conditions discussed in Section \ref{sec_imprec} we obtain a set $\W=\{w_1,\dots,w_\ell\}$ such that $P \subseteq \conv \W \subseteq (1+\eps)P$. An H-representation $Q$ of $\conv \W$ exists and can be considered as a perturbed input in Fortune's definition. Then $\W$ is the ``correct'' result for this perturbed input and thus our method is robust. The perturbation of the input is small in the sense that $P \subseteq Q \subseteq (1+\eps)P$, which means that our method is also stable.  

There is a vast amount of literature related to issues with imprecision of geometric computations, see e.g. \cite[Section 4]{Schirra00} for an overview.
Most of the literature is formulated in terms of {\em convex hull computation}, a problem being equivalent to vertex enumeration by polarity. 
The major part of the work on convex hull algorithms suitable for floating point arithmetic is done for two dimensions, for a selection of literature see e.g. \cite[Section 4.6]{Schirra00}. 
Subsequently we focus on literature on problems with three and more dimensions. To the best of our knowledge the methods introduced in this article differ from those in the literature.

Sugihara \cite{Sugihara94} presents a version of the gift-wrapping algorithm for 3-polytopes, which is robust and topologically consistent but not stable (where the terms {\em robust} and {\em stable} are used in a slightly different manner).
In \cite{HofHopKar88} ``robust'' computations of intersections of polygons are studied. The case of three dimensions (which is reduced to a sequence of polygon intersections) is also discussed, but there is no proof of correctness of the algorithm. Later, in \cite{HopKah92}, {\em correctness for geometric algorithms} has been defined, a concept that takes into account (imprecise) representations of geometric objects. The authors show correctness (in the sense of their definition) for intersecting a 3-polytope with a half-space. 
But iteration, as required for convex hull computations, does not necessarily lead to a correct algorithm as discussed in \cite[Section 4.5]{HopKah92}. 
Moreover, the results in \cite{HopKah92} require some sufficiently small approximation error which is not known a priori.

Even though floating point implementations of vertex enumeration (or convex hull) algorithms are frequently used in practice to ``solve''  problems in arbitrary dimensions (see e.g. \cite{BarDobHuh96, qhull_software, LoeWei16, bensolve, AviBreSei97}), it seems there is no stability result for dimension $d\geq 3$ and no robustness result for dimension $d \geq 4$ so far.


\section{A basic cutting scheme}\label{basic_cut}

The (exact) double description method is an iterative scheme where the inequalities of the H-representation are added one by one. 
After an inequality has been added, the V-representation of the intermediate result is updated. 
In this section we discuss an extension to an approximate variant of the method. To this end we describe a typical step of this method. Let $P$ be defined by the inequalities that have been added so far by the algorithm. Assume that an $\eps$-approximate V-representation $\V$ of $P$ already has been computed. Then we add in such a step a new inequality $h^T x \leq 1$. Denoting the corresponding half-space by $H_\leq \colonequals\{x \mid h^T x \leq 1\}$, we aim to compute an $\eps$-approximate V-representation $\V'$ of $P'\colonequals P\cap H_\leq$.  

For $h\in \R^d\setminus\{0\}$ we set 
\begin{equation*}
		H_+ \colonequals \{x \mid h^T x > 1 + \eps\},\quad	H_- \colonequals \{x \mid h^T x < 1\},\quad
		H_0\; \colonequals \{x \mid 1 \leq h^T x \leq 1+\eps\}.
\end{equation*} 
For $I \subseteq [m]$ we denote by $A_I$ the submatrix of $A$ which consists of the rows of $A$ with indices $I$. If $I=\{i\}$, we also write $A_i$ instead of $A_{\{i\}}$. 
For $\prec \in \{>, \geq, =,\neq, <, \leq\}$ and $u \in \R^d$, we write
\begin{equation*}
	J_\prec(u) = \{i \in [m] \mid A_i u \prec 1\}.
\end{equation*}
In Algorithm~\ref{alg_basic_cut} we give the pseudocode for our cutting scheme. 

For $\eps=0$, Algorithm~\ref{alg_basic_cut} is similar to a typical iteration step of the (exact) double description method:
If $v_1,v_2$ are vertices of $P$ that are endpoints of an edge of $P$ then $v_1$ and $v_2$ have at least $d-1$ common incident inequalities, i.e.\ $|J_=(v_1) \cap J_=(v_2)| \geq d-1$. The condition in line \ref{line_cond_edge} of Algorithm~\ref{alg_basic_cut} generalizes this necessary condition.

\begin{algorithm}[hpt]
\DontPrintSemicolon
\SetKwFor{loop}{loop}{}{end}
\KwIn{polytope $P$, $\eps>0$,\; half-space $H_\leq \colonequals \{x \mid h^T x \leq 1\}$,\; $\eps$-approximate V-representation $\V$ of $P$}
\KwOut{$\eps$-approximate V-representation $\V'$ of $P' \colonequals P \cap H_\leq$}
\Begin{
$\V_+ \leftarrow \V \cap H_+$\;
$\V_- \leftarrow \V \cap H_-$\;
$\V_0 \leftarrow \V \cap H_0$\;
\ForEach{$(v_1,v_2) \in \V_- \times \V_+$}
{ 
	\If{$|J_\geq(v_1) \cap J_\geq(v_2)| \geq d-1$\label{line_cond_edge}}
	{
		choose some $v$ in the line segment $v_1 v_2$ that belongs to $H_0$\label{alg1_line7}\;
		$\V \leftarrow \V \cup \{v\}$\label{alg1_line8}\;
	}
}
$\V' \leftarrow \V\setminus \V_+$\; \label{cutoff}
}
\caption{\label{alg_basic_cut}%
  Basic Cut}
\end{algorithm}

Figure \ref{fig_fail} shows that Algorithm~\ref{alg_basic_cut} is not correct, not even in the plane, as the output is not an $\eps$-approximate V-representation of $P'$. Therefore we try for a modification, which requires some concepts and preliminary results. 

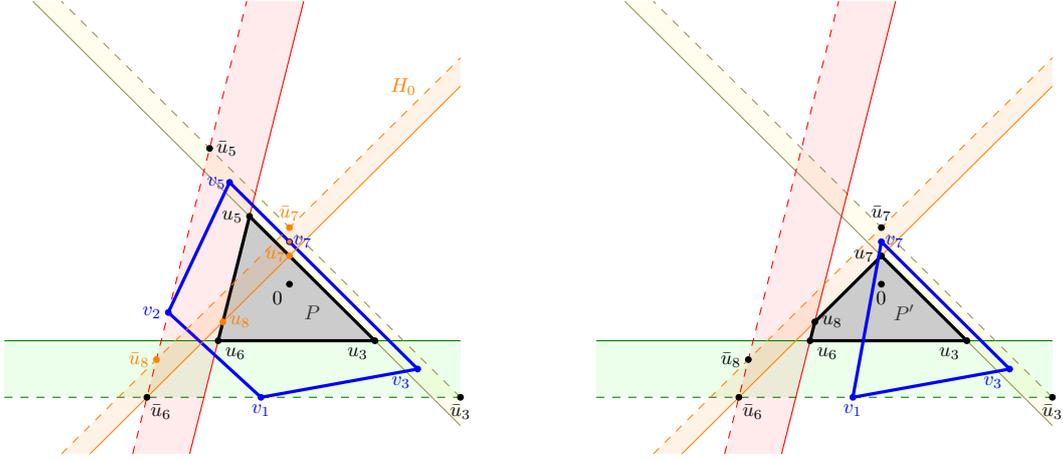
\begin{figure}[hpt]
	\def\scalefactor{.75}	
	\def\spacebetweenpictures{1cm}
\begin{center}
\begin{tikzpicture}[scale=\scalefactor,every node/.style={scale=\scalefactor}]
\figurecoordinates
\draw[white,fill=white] (NE) circle (0.01);
\draw[white,fill=white] (SW) circle (0.01);

\draw [opacity=0, fill opacity=0.1,fill=green!80!white] (A1) -- (A2) -- (AA2) -- (AA1) -- cycle;
\draw [opacity=0, fill opacity=0.1,fill=yellow!80!white] (D1) -- (D2) -- (DD2) -- (DD1) -- cycle;
\draw [opacity=0, fill opacity=0.1,fill=red!80!white] (E1) -- (E2) -- (EE2) -- (EE1) -- cycle;

\draw[green!50!black] (A1) -- (A2);
\draw[green!50!black,dashed] (AA1) -- (AA2);
\draw[yellow!50!black] (D1) -- (D2);
\draw[yellow!50!black,dashed] (DD1) -- (DD2);
\draw[red] (E1) -- (E2);
\draw[red,dashed] (EE1) -- (EE2);

\draw[blue,very thick] (V1) -- (V3) -- (V5) -- (V2) --  cycle;
\draw[fill=gray!40!white,very thick] (U5) -- (U6) -- (U3) --  cycle;

\draw[red!50!yellow] (F1) -- (F2);
\draw[red!50!yellow,dashed] (FF1) -- (FF2);
\draw [opacity=0, fill opacity=0.1,fill=red!50!yellow] (F1) -- (F2) -- (FF2) -- (FF1) -- cycle;

\draw[black!80!white] (.4,-.5) node {$P$};
\draw[red!50!yellow] (2,7/2) node {$H_0$};

\draw[fill=black] (NULL) circle (0.05) node[below left] {$0$};

\draw[fill=black] (U3) circle (0.05) node[below left] {$u_3$};
\draw[fill=black] (U5) circle (0.05) node[left] {$u_5$};
\draw[fill=black] (U6) circle (0.05) node[below right] {$u_6$};
\draw[red!50!yellow,fill=red!50!yellow] (U7) circle (0.05) node[left] {$u_7\!$};
\draw[red!50!yellow,fill=red!50!yellow] (U8) circle (0.05) node[right] {$u_8\!$};

\draw[blue, fill=blue] (V1) circle (0.05) node[below] {$v_1$};
\draw[blue, fill=blue] (V2) circle (0.05) node[left] {$v_2$};
\draw[blue, fill=blue] (V3) circle (0.05) node[below left] {$v_3$};
\draw[blue, fill=blue] (V5) circle (0.05) node[left] {$v_5\!$};
\draw[blue,fill=red!50!yellow] (V7) circle (0.05) node[right] {$\!v_7$};

\draw[fill=black] (UU3) circle (0.05) node[below] {$\bar u_3$};
\draw[fill=black] (UU5) circle (0.05) node[right] {$\bar u_5$};
\draw[fill=black] (UU6) circle (0.05) node[below right] {$\!\bar u_6$};
\draw[red!50!yellow,fill=red!50!yellow] (UU7) circle (0.05) node[above] {$\bar u_7$};
\draw[red!50!yellow,fill=red!50!yellow] (UU8) circle (0.05) node[left] {$\bar u_8$};
\end{tikzpicture}
\hspace{\spacebetweenpictures}
\begin{tikzpicture}[scale=\scalefactor,every node/.style={scale=\scalefactor}]

\draw[white,fill=white] (NE) circle (0.01);
\draw[white,fill=white] (SW) circle (0.01);

\draw [opacity=0, fill opacity=0.1,fill=green!80!white] (A1) -- (A2) -- (AA2) -- (AA1) -- cycle;
\draw [opacity=0, fill opacity=0.1,fill=yellow!80!white] (D1) -- (D2) -- (DD2) -- (DD1) -- cycle;
\draw [opacity=0, fill opacity=0.1,fill=red!80!white] (E1) -- (E2) -- (EE2) -- (EE1) -- cycle;
\draw [opacity=0, fill opacity=0.1,fill=red!50!yellow] (F1) -- (F2) -- (FF2) -- (FF1) -- cycle;

\draw[green!50!black] (A1) -- (A2);
\draw[green!50!black,dashed] (AA1) -- (AA2);
\draw[yellow!50!black] (D1) -- (D2);
\draw[yellow!50!black,dashed] (DD1) -- (DD2);
\draw[red] (E1) -- (E2);
\draw[red,dashed] (EE1) -- (EE2);
\draw[red!50!yellow] (F1) -- (F2);
\draw[red!50!yellow,dashed] (FF1) -- (FF2);

\draw[fill=gray!40!white,very thick] (U6) -- (U3) -- (U7) -- (U8) --  cycle;

\draw[black!80!white] (.4,-.5) node {$P'$};

\draw[fill=black] (NULL) circle (0.05) node[below] {$0$};

\draw[fill=black] (U3) circle (0.05) node[below left] {$u_3$};
\draw[fill=black] (U6) circle (0.05) node[below right] {$u_6$};
\draw[fill=black] (U7) circle (0.05) node[left] {$u_7$};
\draw[fill=black] (U8) circle (0.05) node[right] {$u_8\!$};

\draw[blue,very thick] (V1) -- (V3) -- (V7) --  cycle;
\draw[blue, fill=blue] (V1) circle (0.05) node[below] {$v_1$};
\draw[blue, fill=blue] (V3) circle (0.05) node[below left] {$v_3$};
\draw[blue, fill=blue] (V7) circle (0.05) node[right] {$\!v_7$};

\draw[fill=black] (UU3) circle (0.05) node[below] {$\bar u_3$};
\draw[fill=black] (UU6) circle (0.05) node[below right] {$\!\bar u_6$};
\draw[fill=black] (UU7) circle (0.05) node[above] {$\bar u_7$};
\draw[fill=black] (UU8) circle (0.05) node[left] {$\bar u_8$};
\end{tikzpicture}
\end{center}
\caption{Algorithm~\ref{alg_basic_cut} can fail, even in the plane.
	Left: $P= \conv\{u_3,u_5,u_6\}$, $(1+\eps)P= \conv\{\bar u_3,\bar u_5,\bar u_6\}$ ($\eps = 1$) and a new inequality, illustrated by $H_0$. 
	The set $\V=\{v_1,v_3,v_5,v_2\}$ provides an $\eps$-approximate V-representation of $P$. 
	Right: Result of Algorithm~\ref{alg_basic_cut}. It computes $\V'=\{v_1,v_3,v_7\}$, which is not an $\eps$-approximate V-representation of $P'$ since $P' = \conv\{u_3,u_7,u_8,u_6\} \not\subseteq \conv \V'$.}\label{fig_fail}
\end{figure}

A vertex $u$ of $P$ is said to be {\em covered} by a point $v \in \R^d$ if
\begin{equation*}
	u \in \conv \left(\vertex P \setminus \{u\} \cup \{v\}\right).
\end{equation*}
This means that, if we replace a vertex $u$ of $P$ in the set $\vertex P$ of vertices of $P$ by a point $v$, then this new set V-represents a superset of $P$. 

\begin{proposition}  \label{prop_cover2} 
For $u \in \vertex P$ and $v \in \R^d$, the following statements are equivalent:
	\begin{enumerate}[(i)]
		\item $v$ covers $u$,
		\item $J_=(u) \subseteq J_\geq(v)$.
	\end{enumerate}
\end{proposition}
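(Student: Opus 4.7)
The plan is to handle the two implications separately, reducing everything to elementary linear algebra. The key observations are that a vertex is an extreme point of $P$, and that optimality of a vertex for a linear functional is characterized by the functional lying in the normal cone spanned by the active rows $A_i$.

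For the direction (i) $\Rightarrow$ (ii), I would start by writing the covering relation explicitly as a convex combination $u = \mu v + \sum_{w \in \vertex P \setminus \{u\}} \mu_w w$ with $\mu, \mu_w \geq 0$ summing to $1$. A first small step is to note that $\mu > 0$: otherwise $u$ would be expressed as a convex combination of other points of $P$, contradicting the fact that the vertex $u$ is an extreme point. Then for any $i \in J_=(u)$, applying $A_i$ to the convex combination and using $A_i u = 1$ together with $A_i w \leq 1$ for all $w \in \vertex P \subseteq P$ yields $1 \leq (1-\mu) + \mu A_i v$, which, dividing by $\mu > 0$, gives $A_i v \geq 1$, i.e., $i \in J_\geq(v)$.

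For the direction (ii) $\Rightarrow$ (i), I would argue by contradiction. Assume $u$ does not lie in $\conv((\vertex P \setminus \{u\}) \cup \{v\})$. Separation of $u$ from this finite set produces a linear functional $c \in \R^d$ with $c^T u > c^T w$ for every $w$ in the set; in particular $c^T u > c^T w$ for every other vertex of $P$, so $u$ is the unique maximizer of $c^T x$ over $P$. By LP duality (equivalently, the normal-cone characterization of optimality for $\max c^T x$ subject to $Ax \leq \one$), there exist $\lambda_i \geq 0$, $i \in J_=(u)$, with $c = \sum_{i \in J_=(u)} \lambda_i A_i^{\transpose}$. Then, using hypothesis (ii) to replace each $A_i v$ by a lower bound $1 = A_i u$, one computes
\begin{equation*}
c^T v \;=\; \sum_{i \in J_=(u)} \lambda_i A_i v \;\geq\; \sum_{i \in J_=(u)} \lambda_i \;=\; \sum_{i \in J_=(u)} \lambda_i A_i u \;=\; c^T u,
\end{equation*}
contradicting $c^T v < c^T u$.

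The only non-routine ingredient is the normal-cone/LP-duality step in the second direction; everything else is a direct manipulation of the defining convex combination and the inequalities $A_i x \leq 1$. I do not foresee any real obstacle, but care is needed to justify $\mu > 0$ in the first direction and the Farkas-type representation of $c$ in the second.
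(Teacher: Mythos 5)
Your direction (i)~$\Rightarrow$~(ii) is essentially the paper's argument: express the covering as a convex combination $u = \mu v + (1-\mu)z$, observe $\mu>0$ because $u$ is an extreme point, apply $A_i$ and use $A_iz\le 1$ to conclude $A_iv\ge 1$. The other direction, however, takes a genuinely different route. The paper proves (ii)~$\Rightarrow$~(i) constructively: it pushes the ray $u_\gamma = u+\gamma(u-v)$ away from $v$, uses the hypothesis $J_=(u)\subseteq J_\geq(v)$ to show $u_\gamma\in P$ for small $\gamma>0$, takes the last point $u_\mu$ of the ray in $P$, and shows by a maximality argument that $u_\mu\in\conv(\vertex P\setminus\{u\})$, whence $u\in\conv\{v,u_\mu\}$ exhibits the desired representation. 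You argue instead by contradiction via strict separation of $u$ from $\conv\bigl((\vertex P\setminus\{u\})\cup\{v\}\bigr)$, obtaining $c$ with $c^\transpose u > c^\transpose w$ on this set, so $u$ is the unique maximizer of $c^\transpose x$ over $P$; then LP optimality (the normal-cone representation $c=\sum_{i\in J_=(u)}\lambda_iA_i^\transpose$, $\lambda_i\ge 0$) together with hypothesis (ii) forces $c^\transpose v\ge c^\transpose u$, a contradiction. Both arguments are correct. Your version is shorter and makes the normal-cone structure behind the covering condition explicit, at the cost of invoking LP duality/complementary slackness; the paper's argument is more elementary and self-contained, using only compactness of $P$ and the definition of a vertex, and as a bonus it exhibits an explicit witness $u_\mu$ for the convex combination rather than arguing indirectly.
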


\begin{proof} (i) $\Rightarrow$ (ii). Assume $v$ covers $u$ but there is $j \in J_=(u)$ with $A_j v < 1$. We have $u=\lambda v + (1-\lambda) z$ for some $z \in \conv(\vertex P \setminus\{u\})$ and $0 < \lambda \leq 1$, where $\lambda \neq 0$ holds as $u$ is a vertex of $P$. Thus we obtain the contradiction $1 = A_{i} u = \lambda A_{i} v + (1-\lambda) A_{i} z < 1$.
	
	(ii) $\Rightarrow$ (i). Let $u \in \vertex P$. For $u=v$ the statement is obvious, thus assume $u\neq v$. Set $u_\gamma = u + \gamma (u-v)$. For $\gamma > 0$ we have $J_=(u) \subseteq J_\leq(u_\gamma)$. If $\gamma > 0$ is sufficiently small then 
$J_<(u) \subseteq J_\leq (u_\gamma)$. Thus $[m] = J_=(u) \cup J_<(u) \subseteq J_\leq(u_\gamma)$ and hence $u_\gamma \in P$ for sufficiently small $\gamma > 0$. Let $\mu = \sup \{\gamma \geq 0 \mid u_\gamma \in P\}$. Since $P$ is compact and $u \neq v$, $\mu>0$ is finite and $u_\mu \in P$. Since $u$ is a convex combination of $v$ and $u_\mu$, it remains to show that $u_\mu \in  \conv(\vertex P \setminus \{u\})$. Assuming the contrary we obtain $u_\mu = \lambda u + (1-\lambda) w$ for some $\lambda \in (0, 1)$ and some $w \in P$. Then $u_{\frac{\mu}{1-\lambda}} = w \in P$ contradicts the maximality of $\mu$. 
\end{proof}

The idea is now to compute a set of points such that all vertices of $P$ are covered. If so, edges of $P$ can be related to the condition in line \ref{line_cond_edge} of Algorithm~\ref{alg_basic_cut}: Let $u_1,u_2$ be vertices of $P$ that are endpoints of an edge of $P$. Then we have $|J_=(u_1) \cap J_=(u_2)| \geq d-1$. If $v_1$ covers $u_1$ and $v_2$ covers $u_2$, Proposition \ref{prop_cover2} yields that the condition in line \ref{line_cond_edge} of Algorithm~\ref{alg_basic_cut} is necessary 
for $u_1,u_2$ being endpoints of an edge of $P$.

Let $\eps \geq 0$ be a given tolerance. A finite set $\V \subseteq (1+\eps)P$ is called a {\em strong $\eps$-approximate V-representation} of $P$ if every vertex of $P$ is covered by some element of $\V$. The following proposition tells us that every strong $\eps$-approximate V-representation of $P$ is also an $\eps$-approximate V-representation of $P$ (replace $\conv \V$ by $\V$ in the second condition).

\begin{proposition}\label{prop_cover}
The following statements are equivalent:
		\begin{enumerate}[(i)]
			\item $\V$ is an $\eps$-approximate V-representation of $P$,
			\item $\V \subseteq (1+\eps)P$ and every vertex of $P$ is covered by some element of $\conv \V$.
		\end{enumerate}
\end{proposition}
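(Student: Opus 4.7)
The plan is to prove the two implications separately, using Proposition \ref{prop_cover2} throughout to translate the geometric covering condition into the algebraic form $J_=(u) \subseteq J_\geq(v)$. One containment in each direction, namely $\V \subseteq (1+\eps)P$ versus $\conv \V \subseteq (1+\eps)P$, is essentially bookkeeping since $(1+\eps)P$ is convex and contains $\V$ in one hypothesis and is a superset of $\conv \V$ in the other. The real content of the proposition lies in the equivalence between $P \subseteq \conv \V$ and the covering of every vertex of $P$.

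For (i) $\Rightarrow$ (ii), given a vertex $u$ of $P$, I would exploit that $0 \in \inter P$: along the ray $\{\lambda u : \lambda \geq 0\}$, the compactness of $\conv \V \supseteq P \ni u$ allows me to set $\lambda^* := \max\{\lambda \geq 0 : \lambda u \in \conv \V\} \geq 1$ and take $v := \lambda^* u \in \conv \V$. For every $i \in J_=(u)$, one has $A_i v = \lambda^* A_i u = \lambda^* \geq 1$, so $J_=(u) \subseteq J_\geq(v)$ and Proposition \ref{prop_cover2} gives that $v$ covers $u$.

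For (ii) $\Rightarrow$ (i), I would show $P \subseteq \conv \V$ by contradiction via hyperplane separation. If $P \not\subseteq \conv \V$, there exist $a \in \R^d$ and $b \in \R$ with $a^T s \leq b$ for all $s \in \conv \V$ but $a^T u^* > b$ for some vertex $u^*$ of $P$ (obtained by expanding a violating point as a convex combination of vertices, then picking one that maximizes $a^T$ over $\vertex P$). By hypothesis $u^*$ is covered by some $v \in \conv \V$, so $u^* = \lambda v + (1-\lambda) z$ with $z \in \conv(\vertex P \setminus \{u^*\})$ and $\lambda \in (0,1]$ (where $\lambda > 0$ holds because $u^*$ is a vertex). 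Applying $a^T$ and using $a^T v \leq b$ together with $a^T z \leq a^T u^*$ yields $a^T u^* \leq \lambda b + (1-\lambda) a^T u^*$, hence $a^T u^* \leq b$, a contradiction.

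The main obstacle I anticipate is this second direction, since the hypothesis only controls each vertex of $P$ individually through a single covering point in $\conv \V$, while the ``other vertex'' component $z$ in the covering relation may involve vertices of $P$ that themselves lie outside $\conv \V$. Selecting $u^*$ as an extremal vertex with respect to the separating functional is the trick that avoids having to argue simultaneously about all vertices; the key inequality $a^T z \leq a^T u^*$ then holds automatically because $z$ is a convex combination of vertices other than $u^*$.
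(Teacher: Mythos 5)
Your proof is correct, but it takes a genuinely different route from the paper in the substantive direction. For (i) $\Rightarrow$ (ii) the paper simply notes that every vertex $u$ of $P$ lies in $\conv\V$ (because $P \subseteq \conv\V$) and that $u$ trivially covers itself; your ray-scaling construction of $v = \lambda^* u$ is valid but does more work than is needed. The real difference is in (ii) $\Rightarrow$ (i). The paper proceeds by induction on the size of an index set $I \subseteq [k]$, showing $\U_I \subseteq \conv(\V_I \cup \U_{[k]\setminus I})$ by iteratively substituting one covering relation into another and solving for the vertex in question; this is an explicit, somewhat bookkeeping-heavy replacement argument. You instead argue by hyperplane separation: assuming $P \not\subseteq \conv\V$, you separate, choose $u^*$ to be a vertex of $P$ maximizing the separating functional over $\vertex P$, and then the covering relation $u^* = \lambda v + (1-\lambda) z$ with $\lambda > 0$ collapses to a contradiction because $a^T v \leq b$ and $a^T z \leq a^T u^*$. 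The extremal choice of $u^*$ is exactly what makes the ``other component'' $z$ harmless, sidestepping the need to argue about all vertices at once. Your argument is shorter and more conceptual; the paper's induction has the advantage of being constructive, exhibiting the convex-combination coefficients rather than deriving a contradiction. Both are complete and correct.
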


\begin{proof} (i) $\implies$ (ii). This is obvious since every vertex of $P \subseteq Q$ covers itself.
	
(ii) $\implies$ (i). We need to show that $P \subseteq \conv \V \subseteq (1+\varepsilon)\P$. The second inclusion is a direct consequence of $\V \subseteq (1+\eps)P$. To show the first inclusion we denote by $\{u_i \mid i \in [k]\}$ the set of vertices of $P$. Let $v_i \in \conv\V$ be a point that covers $u_i$ (this allows $v_i=v_j$ for $u_i\neq u_j$). For an index set $I \subseteq [k]$ we write $\U_I=\{u_i \mid i \in I\}$ and $\V_I=\{v_i \mid i \in I\}$.
We show by induction that, for all $\ell \in [k]$,
\begin{equation}\label{eq_ind}
	\forall I \subseteq [k] \text{ with } |I|=\ell:\quad \U_I \subseteq \conv (\V_I \cup \U_{[k]\setminus I}). 
\end{equation}
For $\ell=k$ this leads to $\U_{[k]} \subseteq \conv \V_{[k]}$, which proves our claim.

For $\ell=1$, \eqref{eq_ind} holds because, for all $i \in [k]$, $u_i$ is covered by $v_i$. Assume that \eqref{eq_ind} holds for some $\ell = n < k$. Let $I \subseteq [k]$ with $|I|=n+1$. Without loss of generality let $I=[n+1]$. We show that $u_1 \in \conv (\V_I \cup \U_{[k]\setminus I})$. Let $I_2=I\setminus \{2\}$ and $I_1=I\setminus \{1\}$. Since $|I_1| = |I_2| = n$, we have
$u_1 \in \conv (\V_{I_2} \cup \U_{[k]\setminus {I_2}})$ and $u_2 \in \conv (\V_{I_1} \cup \U_{[k]\setminus {I_1}})$. Thus there are $\lambda_i \geq 0$ ($i\in [k]$) with $\sum_{i=1}^k \lambda_i = 1$ and $\mu_i \geq 0$ ($i\in [k]$) with $\sum_{i=1}^k \mu_i = 1$ such that
\begin{equation}\label{eq_ind1}
	u_1 = \lambda_1 v_1 + \lambda_2 u_2 + \lambda_3 v_3 + \dots + \lambda_{n+1} v_{n+1} + \lambda_{n+2} u_{n+2} + \dots + \lambda_k u_k,
\end{equation} 
\begin{equation}\label{eq_ind2}
u_2 = \mu_1 u_1 + \mu_2 v_2 + \mu_3 v_3 + \dots + \mu_{n+1} v_{n+1} + \mu_{n+2} u_{n+2} + \dots + \mu_k u_k.\end{equation} 
We substitute $u_2$ in \eqref{eq_ind1} by the right hand side of \eqref{eq_ind2} and resolve by $u_1$ (since $u_1 \neq u_2$ we have $1-\lambda_2 \mu_1 \neq 0$). We obtain that $u_1$ is a convex combination of $\{v_1,\dots,v_{n+1},u_{n+2},\dots,u_k\}$, i.e.\ $u_1 \in  \conv (\V_I \cup \U_{[k]\setminus I})$. Likewise we get $u_i \in  \conv (\V_I \cup \U_{[k]\setminus I})$ for all $i \in I$. Hence \eqref{eq_ind} holds for $\ell = n+1$, which completes the proof.
\end{proof}

A point $v \in \R^d$ is said to be {\em $\eps$-incident} to (an inequality indexed by) $i \in [m]$ iff $v \in (1+\eps)P$ and $i \in J_\geq(v)$. By Proposition~\ref{prop_cover2}, every inequality $i \in [m]$ that is incident with a vertex $u$ of $P$ (i.e.\ $i \in J_=(u)$) is $\eps$-incident with a point $v \in (1+\eps)P$ that covers $u$. The converse is not true. For instance, in $\R^2$ one can easily construct an example where $v \in (1+\eps)P$ covers two vertices $u_1 \neq u_2$ such that $J_\geq(v) = \{1,2,3\}$ and $J_=(u_1) = \{1,2\}$, $J_=(u_2)= \{2,3\}$. Inequality $3$ is $\varepsilon$-incident to $v$, which covers $u_1$, but not incident to $u_1$.

If the input of Algorithm~\ref{alg_basic_cut} is required to be a strong $\eps$-approximate V-representation, the result $\V'$ can be an $\eps$-approximate V-representation, but not necessarily a strong one, see Figure~\ref{fig1a}. Therefore, repeated application of the algorithm could also fail in this case. But, as we see in Figure~\ref{fig1b}, this is not the case in our example. We will show in Section \ref{sec_23} (based on substantially other arguments) that for dimension $d\leq 3$ repeated application of the algorithm always works.

\smallskip
\begin{figure}[hpt]
	\def\scalefactor{.75}	
	\def\spacebetweenpictures{1cm}
\begin{center}
\begin{tikzpicture}[scale=\scalefactor,every node/.style={scale=\scalefactor}]
\figurecoordinates

\draw[white,fill=white] (NE) circle (0.01);
\draw[white,fill=white] (SW) circle (0.01);

\draw [opacity=0, fill opacity=0.1,fill=green!80!white] (A1) -- (A2) -- (AA2) -- (AA1) -- cycle;
\draw [opacity=0, fill opacity=0.1,fill=blue!80!white] (B1) -- (B2) -- (BB2) -- (BB1) -- cycle;
\draw [opacity=0, fill opacity=0.1,fill=brown!80!white] (C1) -- (C2) -- (CC2) -- (CC1) -- cycle;
\draw [opacity=0, fill opacity=0.1,fill=yellow!80!white] (D1) -- (D2) -- (DD2) -- (DD1) -- cycle;

\draw[green!50!black] (A1) -- (A2);
\draw[green!50!black,dashed] (AA1) -- (AA2);
\draw[blue] (B1) -- (B2);
\draw[blue,dashed] (BB1) -- (BB2);
\draw[brown] (C1) -- (C2);
\draw[brown,dashed] (CC1) -- (CC2);
\draw[yellow!50!black] (D1) -- (D2);
\draw[yellow!50!black,dashed] (DD1) -- (DD2);

\draw[fill=gray!40!white,very thick] (U1) -- (U2) -- (U4) -- (U3) -- cycle;
\draw[blue,very thick] (V1) -- (V3) -- (V4) -- (V2) --  cycle;

\draw [opacity=0, fill opacity=0.1,fill=red!80!white] (E1) -- (E2) -- (EE2) -- (EE1) -- cycle;
\draw[red] (E1) -- (E2);
\draw[red,dashed] (EE1) -- (EE2);

\draw[black!80!white] (.4,-.5) node {$P$};
\draw[red] (-.6,3.5) node {$H_0$};

\draw[fill=black] (NULL) circle (0.05) node[below left] {$0$};

\draw[fill=black] (U1) circle (0.05) node[below] {$u_1\;$};
\draw[fill=black] (U2) circle (0.05) node[right] {$u_2\;$};
\draw[fill=black] (U3) circle (0.05) node[below left] {$u_3$};
\draw[fill=black] (U4) circle (0.05) node[left] {$u_4$};
\draw[red,fill=red] (U5) circle (0.05) node[left] {$u_5$};
\draw[red,fill=red] (U6) circle (0.05) node[below right] {$u_6$};

\draw[blue, fill=blue] (V1) circle (0.05) node[below] {$v_1$};
\draw[blue, fill=blue] (V2) circle (0.05) node[left] {$v_2$};
\draw[blue, fill=blue] (V3) circle (0.05) node[below left] {$v_3$};
\draw[blue, fill=blue] (V4) circle (0.05) node[below left] {$v_4$};
\draw[blue, fill=red] (V5) circle (0.05) node[right] {$v_5\!$};

\draw[fill=black] (UU1) circle (0.05) node[below left] {$\bar u_1$};
\draw[fill=black] (UU2) circle (0.05) node[below left] {$\bar u_2$};
\draw[fill=black] (UU3) circle (0.05) node[below] {$\bar u_3$};
\draw[fill=black] (UU4) circle (0.05) node[below right] {$\bar u_4$};
\draw[red,fill=red] (UU5) circle (0.05) node[right] {$\bar u_5$};
\draw[red,fill=red] (UU6) circle (0.05) node[below right] {$\!\bar u_6$};
\end{tikzpicture}
\begin{tikzpicture}[scale=\scalefactor,every node/.style={scale=\scalefactor}]
\draw[white,fill=white] (NE) circle (0.01);
\draw[white,fill=white] (SW) circle (0.01);

\draw [opacity=0, fill opacity=0.1,fill=green!80!white] (A1) -- (A2) -- (AA2) -- (AA1) -- cycle;
\draw [opacity=0, fill opacity=0.1,fill=blue!80!white] (B1) -- (B2) -- (BB2) -- (BB1) -- cycle;
\draw [opacity=0, fill opacity=0.1,fill=brown!80!white] (C1) -- (C2) -- (CC2) -- (CC1) -- cycle;
\draw [opacity=0, fill opacity=0.1,fill=yellow!80!white] (D1) -- (D2) -- (DD2) -- (DD1) -- cycle;
\draw [opacity=0, fill opacity=0.1,fill=red!80!white] (E1) -- (E2) -- (EE2) -- (EE1) -- cycle;

\draw[green!50!black] (A1) -- (A2);
\draw[green!50!black,dashed] (AA1) -- (AA2);
\draw[blue] (B1) -- (B2);
\draw[blue,dashed] (BB1) -- (BB2);
\draw[brown] (C1) -- (C2);
\draw[brown,dashed] (CC1) -- (CC2);
\draw[yellow!50!black] (D1) -- (D2);
\draw[yellow!50!black,dashed] (DD1) -- (DD2);
\draw[red] (E1) -- (E2);
\draw[red,dashed] (EE1) -- (EE2);

\draw[blue, very thick] (V1) -- (V3) -- (V5) -- (V2) --  cycle;
\draw[fill=gray!40!white, very thick] (U5) -- (U6) -- (U3) --  cycle;

\draw[black!80!white] (.4,-.5) node {$P'$};

\draw[fill=black] (NULL) circle (0.05) node[below left] {$0$};

\draw[fill=black] (U3) circle (0.05) node[below left] {$u_3$};
\draw[fill=black] (U5) circle (0.05) node[left] {$u_5$};
\draw[fill=black] (U6) circle (0.05) node[below right] {$u_6$};

\draw[blue, fill=blue] (V1) circle (0.05) node[below] {$v_1$};
\draw[blue, fill=blue] (V2) circle (0.05) node[left] {$v_2$};
\draw[blue, fill=blue] (V3) circle (0.05) node[below left] {$v_3$};
\draw[blue, fill=blue] (V5) circle (0.05) node[right] {$v_5\!$};

\draw[fill=black] (UU3) circle (0.05) node[below] {$\bar u_3$};
\draw[fill=black] (UU5) circle (0.05) node[right] {$\bar u_5$};
\draw[fill=black] (UU6) circle (0.05) node[below right] {$\!\bar u_6$};
\end{tikzpicture}
\end{center}
\caption{$P= \conv\{u_1,u_3,u_4,u_2\}$, $(1+\eps)P= \conv\{\bar u_1,\bar u_3,\bar u_4,\bar u_2\}$ and a ``cut'' $H_0$ are shown on the left. $\V=\{v_1,v_3,v_4,v_2\}$ is a strong $\eps$-approximate V-representation of $P$. Algorithm~\ref{alg_basic_cut} computes $\V'=\{v_1,v_3,v_5,v_2\}$, displayed on the right, which is an $\eps$-approximate V-representation of $P$, but not a strong one. Although $P'$ coincides with $P$ in Figure~\ref{fig_fail}, the algorithm does not fail in the next iteration, see Figure 3.}\label{fig1a}
\end{figure}

\begin{figure}[hpt]
	\def\scalefactor{.75}	
	\def\spacebetweenpictures{1cm}
\begin{center}
\begin{tikzpicture}[scale=\scalefactor,every node/.style={scale=\scalefactor}]
	\figurecoordinates

\draw[white,fill=white] (NE) circle (0.01);
\draw[white,fill=white] (SW) circle (0.01);

\draw [opacity=0.1, green!80!white, fill opacity=0.1,fill=green!80!white] (A1) -- (A2) -- (AA2) -- (AA1) -- cycle;
\draw [opacity=0.1, brown!80!white, fill opacity=0.1,fill=brown!80!white] (C1) -- (C2) -- (CC2) -- (CC1) -- cycle;
\draw [opacity=0.1, yellow!80!white, fill opacity=0.1,fill=yellow!80!white] (D1) -- (D2) -- (DD2) -- (DD1) -- cycle;
\draw [opacity=0.1, red!80!white, fill opacity=0.1,fill=red!80!white] (E1) -- (E2) -- (EE2) -- (EE1) -- cycle;

\draw[green!50!black] (A1) -- (A2);
\draw[green!50!black,dashed] (AA1) -- (AA2);
\draw[brown] (C1) -- (C2);
\draw[brown,dashed] (CC1) -- (CC2);
\draw[yellow!50!black] (D1) -- (D2);
\draw[yellow!50!black,dashed] (DD1) -- (DD2);
\draw[red] (E1) -- (E2);
\draw[red,dashed] (EE1) -- (EE2);

\draw[fill=gray!40!white,very thick] (U5) -- (U6) -- (U3) --  cycle;
\draw[blue,very thick] (V1) -- (V3) -- (V5) -- (V2) --  cycle;

\draw [opacity=0.1, red!50!yellow, fill opacity=0.1,fill=red!50!yellow] (F1) -- (F2) -- (FF2) -- (FF1) -- cycle;
\draw[red!50!yellow] (F1) -- (F2);
\draw[red!50!yellow,dashed] (FF1) -- (FF2);

\draw[black!80!white] (.4,-.5) node {$P$};
\draw[red!50!yellow] (2,7/2) node {$H_0$};

\draw[fill=black] (NULL) circle (0.05) node[below left] {$0$};

\draw[fill=black] (U3) circle (0.05) node[below left] {$u_3$};
\draw[fill=black] (U5) circle (0.05) node[left] {$u_5$};
\draw[fill=black] (U6) circle (0.05) node[below right] {$u_6$};
\draw[red!50!yellow,fill=red!50!yellow] (U7) circle (0.05) node[below] {$u_7$};
\draw[red!50!yellow,fill=red!50!yellow] (U8) circle (0.05) node[right] {$u_8\!$};

\draw[blue, fill=blue] (V1) circle (0.05) node[below] {$v_1$};
\draw[blue, fill=blue] (V2) circle (0.05) node[left] {$v_2$};
\draw[blue, fill=blue] (V3) circle (0.05) node[below left] {$v_3$};
\draw[blue, fill=blue] (V5) circle (0.05) node[left] {$v_5\!$};
\draw[blue,fill=red!50!yellow] (V7) circle (0.05) node[right] {$\!v_7$};
\draw[blue,fill=red!50!yellow] (V8) circle (0.05) node[left] {$v_8\!$};

\draw[fill=black] (UU3) circle (0.05) node[below] {$\bar u_3$};
\draw[fill=black] (UU5) circle (0.05) node[right] {$\bar u_5$};
\draw[fill=black] (UU6) circle (0.05) node[below right] {$\!\bar u_6$};
\draw[red!50!yellow,fill=red!50!yellow] (UU7) circle (0.05) node[above] {$\bar u_7$};
\draw[red!50!yellow,fill=red!50!yellow] (UU8) circle (0.05) node[above left] {$\bar u_8$};
\end{tikzpicture}
\begin{tikzpicture}[scale=\scalefactor,every node/.style={scale=\scalefactor}]
\figurecoordinates

\draw[white,fill=white] (NE) circle (0.01);
\draw[white,fill=white] (SW) circle (0.01);

\draw [opacity=0, fill opacity=0.1,fill=green!80!white] (A1) -- (A2) -- (AA2) -- (AA1) -- cycle;
\draw [opacity=0, fill opacity=0.1,fill=brown!80!white] (C1) -- (C2) -- (CC2) -- (CC1) -- cycle;
\draw [opacity=0, fill opacity=0.1,fill=yellow!80!white] (D1) -- (D2) -- (DD2) -- (DD1) -- cycle;
\draw [opacity=0, fill opacity=0.1,fill=red!80!white] (E1) -- (E2) -- (EE2) -- (EE1) -- cycle;
\draw [opacity=0, fill opacity=0.1,fill=red!50!yellow] (F1) -- (F2) -- (FF2) -- (FF1) -- cycle;

\draw[green!50!black] (A1) -- (A2);
\draw[green!50!black,dashed] (AA1) -- (AA2);
\draw[brown] (C1) -- (C2);
\draw[brown,dashed] (CC1) -- (CC2);
\draw[yellow!50!black] (D1) -- (D2);
\draw[yellow!50!black,dashed] (DD1) -- (DD2);
\draw[red] (E1) -- (E2);
\draw[red,dashed] (EE1) -- (EE2);
\draw[red!50!yellow] (F1) -- (F2);
\draw[red!50!yellow,dashed] (FF1) -- (FF2);

\draw[fill=gray!40!white,very thick] (U6) -- (U3) -- (U7) -- (U8) --  cycle;
\draw[blue, very thick] (V1) -- (V3) -- (V7) -- (V8) --  cycle;

\draw[black!80!white] (.4,-.5) node {$P'$};

\draw[fill=black] (NULL) circle (0.05) node[below left] {$0$};

\draw[fill=black] (U3) circle (0.05) node[below left] {$u_3$};
\draw[fill=black] (U6) circle (0.05) node[below right] {$u_6$};
\draw[fill=black] (U7) circle (0.05) node[below] {$u_7$};
\draw[fill=black] (U8) circle (0.05) node[right] {$u_8\!$};

\draw[blue, fill=blue] (V1) circle (0.05) node[below] {$v_1$};
\draw[blue, fill=blue] (V3) circle (0.05) node[below left] {$v_3$};
\draw[blue, fill=blue] (V7) circle (0.05) node[right] {$v_7\!$};
\draw[blue, fill=blue] (V8) circle (0.05) node[left] {$v_8\!$};

\draw[fill=black] (UU3) circle (0.05) node[below] {$\bar u_3$};
\draw[fill=black] (UU6) circle (0.05) node[below right] {$\!\bar u_6$};
\draw[fill=black] (UU7) circle (0.05) node[above] {$\bar u_7$};
\draw[fill=black] (UU8) circle (0.05) node[above left] {$\bar u_8$};
\end{tikzpicture}
\end{center}
\caption{We start a new iteration with the result of Figure~\ref{fig1a}. $P$ is the same as $P$ in Figure~\ref{fig_fail}. Because of the redundant inequality (brown), which is missing in Figure~\ref{fig_fail}, Algorithm~\ref{alg_basic_cut} does not fail here. The redundant inequality makes the line segment between $v_1$ and $v_2$ ``observable'' for the algorithm. Thus, in contrast to Figure~\ref{fig_fail}, the point $v_8$ is added to $\V'$.}\label{fig1b}
\end{figure}
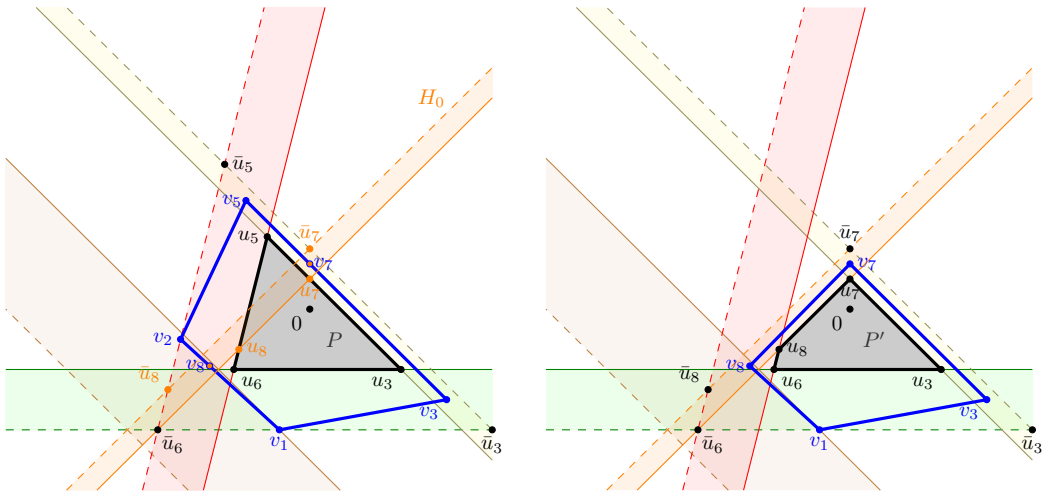

\begin{remark}\label{rem_trivial}
	Note that trivial solution methods exist for the approximate vertex enumeration problem. If the condition in line \ref{line_cond_edge} of Algorithm \ref{alg_basic_cut} is omitted (always true), we obtain a correct algorithm. Such a variant, however, is not ``practically relevant'' as it computes way too many vertices.  
\end{remark}

The following two conditions can be used to prove a valid variant of Algorithm~\ref{alg_basic_cut}. As they involve the vertices of $P$, the computation of which requires to solve the exact vertex enumeration problem, they are of theoretical interest only. Under these conditions Algorithm~\ref{alg_basic_cut} maintains the property of $\V$ being a strong $\eps$-approximate V-representation of $P$. The aim of these investigations is to get an idea under which conditions this invariance property can fail.

Given a strong $\eps$-approximate V-representation $\V$ of $P$ and a half-space $H_\leq \colonequals\{x \mid h^T x \leq 1\}$, $\V$ is said to be {\em $h$-correct} if for every vertex $u \in P$ there exists $v \in \V$ such that $u$ is covered by $v$ and
\begin{enumerate}[({A}1)]
	\item\label{a1} $u \in H_+ \cup H_0 \implies v \in H_+ \cup H_0$,
	\item\label{a2} $u \in H_- \implies v \in H_- \cup H_0$. 
\end{enumerate}

For $\prec \in \{>, \geq, =,\neq, <, \leq\}$ and $u \in \R^d$, we define $J'_\prec(u) \colonequals J_\prec(u) \cup \{m+1\}$ if $h^T u \prec 1$ and $J'_\prec(u) \colonequals J_\prec(u)$ otherwise. This corresponds to adding $h^T$ as the $(m+1)$-th row to the matrix $A \in \R^{m \times d}$ in the definition of $P$. Thus Proposition~\ref{prop_cover2} states that a vertex $u$ of $P'$ is covered by $v \in \R^d$ if and only of $J'_=(u) \subseteq J'_\geq(v)$. 

\begin{theorem} \label{th_basic_cut} If the input $\V$ is an $h$-correct strong $\eps$-approximate V-representation of $P$, then Algorithm~\ref{alg_basic_cut} computes a strong $\eps$-approximate V-representation $\V'$ of $P'$. 
\end{theorem}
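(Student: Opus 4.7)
The proof splits into two parts: first, showing $\V'\subseteq(1+\eps)P'$, and second, showing every vertex of $P'$ is covered by an element of $\V'$. The first part is routine. Using $(1+\eps)P'=(1+\eps)P\cap\{x:h^Tx\leq 1+\eps\}$, each $v\in\V\setminus\V_+$ inherits membership in $(1+\eps)P$ from the strong hypothesis on $\V$ and satisfies $h^Tv\leq 1+\eps$ because $v\notin H_+$, while each point produced at line~\ref{alg1_line7} lies on a segment between two members of $(1+\eps)P$ (so lies in $(1+\eps)P$ by convexity) and explicitly inside $H_0$.

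For the covering part I use the $J'$-notation defined before the theorem, so that by the extension of Proposition~\ref{prop_cover2} a vertex $w$ of $P'$ is covered by $v$ iff $J'_=(w)\subseteq J'_\geq(v)$. Vertices of $P'$ fall into two families: (a) old vertices $w\in\vertex P$ with $h^Tw\leq 1$, and (b) new intersection vertices $w=\lambda u_1+(1-\lambda)u_2$ arising from an edge $u_1u_2$ of $P$ with $h^Tu_1<1<h^Tu_2$ and $h^Tw=1$. For (a) with $h^Tw<1$, the (A2)-covering $v\in\V$ of $w$ lies in $H_-\cup H_0\subseteq\V'$ and covers $w$ because $m+1\notin J'_=(w)$ and $J_=(w)\subseteq J_\geq(v)\subseteq J'_\geq(v)$. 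For (a) with $h^Tw=1$ and an (A1)-covering $v\in H_0$, the same argument works, with the extra observation that $h^Tv\geq 1$ forces $m+1\in J'_\geq(v)$. For (b), if either the (A2)-covering $v_1$ of $u_1$ or the (A1)-covering $v_2$ of $u_2$ lies in $H_0$, it already covers $w$ via $J_=(w)=J_=(u_1)\cap J_=(u_2)\subseteq J_\geq(v_i)$ together with $h^Tv_i\geq 1$; otherwise $(v_1,v_2)\in\V_-\times\V_+$, the edge condition at line~\ref{line_cond_edge} is inherited from $|J_=(u_1)\cap J_=(u_2)|\geq d-1$, and the $v^*\in H_0$ added by the algorithm covers $w$ by convex-combination arguments on $J_=(u_1)\cap J_=(u_2)$ together with $h^Tv^*\geq 1$.

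The remaining and most delicate sub-case is (a) with $h^Tw=1$ and (A1)-covering $v_w\in H_+$, so that $v_w$ is deleted by line~\ref{cutoff} and must be replaced. I treat $w$ as a degenerate intersection point: picking an adjacent vertex $u'$ of $w$ in $P$ with $h^Tu'<1$ (which exists because $0\in\inter P$ forces $h^T$ to drop below $1$ on some vertex reachable from $w$ via the edge graph of $P$), I extract a covering $v_1\in H_-\cup H_0$ of $u'$ from (A2) and reproduce the case~(b) argument on the pair $(v_1,v_w)$: the edge condition at line~\ref{line_cond_edge} follows from $|J_=(w)\cap J_=(u')|\geq d-1$, so the algorithm inserts some $v^*\in H_0$ on the segment, and the $d-1$ shared inequalities of $J_=(w)\cap J_=(u')$ are handled as in (b). The main obstacle is the one extra inequality $i_0\in J_=(w)\setminus J_=(u')$ that $w$ carries beyond a generic edge point: I expect to exploit the freedom in choosing $v^*$ at line~\ref{alg1_line7}, picking $h^Tv^*$ close to $1+\eps$, and to combine $A_{i_0}v_w\geq 1$ with the linearity of $A_{i_0}$ along the segment to show that the hyperplane $\{A_{i_0}x=1\}$ is crossed at $h^T$-value at most $1$, so that $A_{i_0}v^*\geq 1$ throughout the $H_0$-portion. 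This geometric step is precisely where the asymmetric $h$-correctness hypothesis carries its weight and is what I expect to be the most delicate part of the proof.
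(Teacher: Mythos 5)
Your proposal follows the paper's route and correctly handles the same cases the paper treats cleanly. The paper parametrizes a vertex $u$ of $P'$ in $H_0$ as $u=\lambda u_1+(1-\lambda)u_2$ with $\lambda\in[0,1)$ on an edge of $P$, finds $h$-correct covers $v_1\in H_-\cup H_0$ of $u_1$ and $v_2\in H_0\cup H_+$ of $u_2$, and then argues that one of them or the new segment point covers $u$ via $J_=(u)\subseteq J_=(u_1)\cap J_=(u_2)\subseteq J_\geq(v_1)\cap J_\geq(v_2)\subseteq J_\geq(v)$; your cases (a)/(b) decompose this in essentially the same way, with $J'$ bookkeeping handled the same as the paper.

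The sub-case you single out, $w$ a vertex of $P$ with $h^Tw=1$ and (A1)-cover $v_w\in H_+$, is indeed where both arguments get thin (the paper's first inclusion $J_=(u)\subseteq J_=(u_1)$ fails for $\lambda=0$), but your proposed patch is not sound. You cannot ``pick $h^Tv^*$ close to $1+\eps$'': the theorem must hold for every admissible choice in line~\ref{alg1_line7}, so the proof may not select $v^*$ strategically. The claim that $\{A_{i_0}x=1\}$ is crossed at $h^T$-value at most $1$ does not follow from $A_{i_0}v_w\geq1$ and linearity alone, since $A_{i_0}v_1$ is uncontrolled ($i_0\notin J_=(u')$, and $J_=(u')\subseteq J_\geq(v_1)$ gives no lower bound on $A_{i_0}v_1$). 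You also do not treat $v_1\in H_0$, where $(v_1,v_w)\notin\V_-\times\V_+$, no new point is created, and one must show $v_1$ itself covers $w$. The ingredient you are missing is structural: $v_w\in H_+$ forces $h$ to lie \emph{outside} the cone generated by $\{A_j^T : j\in J_=(w)\}$ (otherwise $h=\sum\alpha_jA_j^T$ with $\alpha_j\geq0$, $\sum\alpha_j=h^Tw=1$, whence $h^Tv_w\leq1+\eps$). From this, for a suitably chosen adjacent vertex $u'\in H_-$, one gets a conic decomposition $A_{i_0}^T=\alpha h+\sum_{j\in J_=(w)\cap J_=(u')}\beta_jA_j^T$ with $\alpha,\beta_j\geq0$ and $\alpha+\sum\beta_j=1$; then $A_{i_0}v\geq1$ holds for every $v$ with $h^Tv\geq1$ and $A_jv\geq1$ for $j\in J_=(w)\cap J_=(u')$, which applies to every $v^*\in[v_1,v_w]\cap H_0$ regardless of the algorithm's choice, and also to $v_1$ itself when $v_1\in H_0$.
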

\begin{proof}
Let $u$ be a vertex of $P'$. We need to show that the output $\V'$ contains some $v \in (1+\eps)P'$ such that $J'_=(u) \subseteq J'_\geq(v)$.

If $u \in H_-$, $u$ is also a vertex of $P$. There is $v \in \V$ such that $v$ covers $u$, that is, $J_=(u) \subseteq J_\geq(v)$. Since $\V$ is assumed to be $h$-correct, we have $v \in H_- \cup H_0$. Hence $v$ is not cut off in line \ref{cutoff} of the algorithm. We have $v \in (1+\eps)P'$ and $J'_=(u) = J_=(u) \subseteq J_\geq(v) \subseteq J'_\geq(v)$.

Let $u \in H_0$. Then $u$ belongs to an edge of $P$ with endpoints $u_1 \in H_-$, $u_2 \in H_0 \cup H_+$, i.e.\ $u_1$ and $u_2$ are vertices of $P$ and there is $\lambda \in [0,1)$ such that $u=\lambda u_1 + (1-\lambda) u_2$.
The assumption of $\V$ being $h$-correct implies that there are cover points $v_1 \in \V$ of $u_1$ with $v_1 \in H_- \cup H_0$ and $v_2 \in \V$ of $u_2$ with $v_2 \in H_0 \cup H_+$. If $v_1 \in H_0$ or $v_2 \in H_0$, $u$ is covered by $v_1$ or $v_2$ since $J_=(u) \subseteq J_=(u_i)$, $(i=1,2)$. In the remaining case we have $v_1 \in H_-$ and $v_2 \in H_+$. As shown above, the condition in line \ref{line_cond_edge} is satisfied, which implies that the algorithm adds a point $v \in \conv\{v_1,v_2\} \cap H_0$ to $\V$. We have
\begin{equation}\label{eq_ha}
	J_=(u) \subseteq J_=(u_1) \cap J_=(u_2) \subseteq J_\geq(v_1) \cap J_\geq(v_2) \subseteq J_\geq(v),
\end{equation}
where the first inclusion holds as $u$ is on an edge with endpoints $u_1,u_2$, the second inclusion holds as $v_1$ covers $u_1$ and $v_2$ covers $u_2$, and the third inclusion holds as $v \in \conv\{v_1,v_2\}$. Moreover, $v \in H_0 \cap (1+\eps)P$ implies $v \in (1+\eps) P'$. From $v \in H_0$ we deduce $m+1 \in J'_\geq(v)$. Since $u \in H_\leq \cap H_0$ we get $m+1 \in J'_=(u)$. 
By \eqref{eq_ha}, we have $J'_=(u) = J_=(u) \cup \{m+1\} \subseteq J_\geq(v) \cup \{m+1\} = J'_\geq(v)$. 
\end{proof}

In Figure~\ref{fig1a} we see that condition (A1) cannot be omitted in Theorem~\ref{th_basic_cut}. Condition (A1) is violated since $u_1$ is covered by $v_1$ only, but $u_1 \in H_0$ and $v_1\in H_-$. We close this section with an example showing that condition (A2) cannot be omitted in Theorem~\ref{th_basic_cut}.

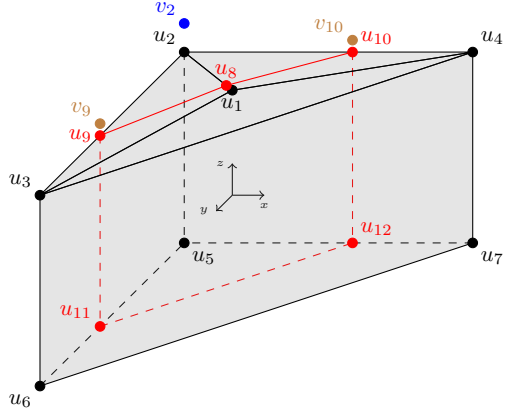
\begin{figure}[ht]
	\begin{center}
\begin{tikzpicture}[line join=bevel, x=40,z=-20,y=40, scale=0.9, every node/.style={scale=0.85}] 
\coordinate (NULL) at (0,0,0);
\coordinate (X) at (1/3,0,0);
\coordinate (Y) at (0,0,1/3);
\coordinate (Z) at (0,1/3,0);
\coordinate (U1) at (0,1.1,0);
\coordinate (U2) at (-1,1,-1);
\coordinate (U3) at (-1,1,2);
\coordinate (U4) at (2,1,-1);
\coordinate (U5) at (-1,-1,-1);
\coordinate (U6) at (-1,-1,2);
\coordinate (U7) at (2,-1,-1);
\coordinate (V2) at (-1,1.3,-1);
\coordinate (V9) at (-1,1+5/40,3/4);
\coordinate (V10) at (3/4,1+5/40,-1);

\coordinate (U8) at (-1/8,1+7/80,-1/8);
\coordinate (U9) at (-1,1,3/4);
\coordinate (U10) at (3/4, 1,-1);
\coordinate (U11) at (-1,-1,3/4);
\coordinate (U12) at (3/4,-1,-1);

\draw[->] (NULL) -- (X);
\draw[->] (NULL) -- (Y);
\draw[->] (NULL) -- (Z);
\draw[fill=black] (X) node[below] {{\tiny$x$}};
\draw[fill=black] (Y) node[left] {{\tiny$y$}};
\draw[fill=black] (Z) node[left] {{\tiny$z$}};
\draw[dashed] (U7) -- (U5);
\draw[dashed] (U2) -- (U5);
\draw[dashed] (U6) -- (U5);
\draw[red,dashed] (U9) -- (U11);
\draw[red,dashed] (U11) -- (U12);
\draw[red,dashed] (U10) -- (U12);
\draw [fill opacity=0.2,fill=black!50!white] (U1) -- (U2) -- (U3) -- cycle;
\draw [fill opacity=0.2,fill=black!50!white](U1) -- (U3) -- (U4) -- cycle;
\draw [fill opacity=0.2,fill=black!50!white](U1) -- (U2) -- (U4) -- cycle;
\draw [fill opacity=0.2,fill=black!50!white](U3) -- (U4) -- (U7) -- (U6) -- cycle;

\draw[red] (U10) -- (U8);
\draw[red] (U8) -- (U9);
\draw[fill=black] (U1) circle (0.05) node[below] {$u_1$};
\draw[fill=black] (U2) circle (0.05) node[above left] {$u_2$};
\draw[fill=black] (U3) circle (0.05) node[above left] {$u_3$};
\draw[fill=black] (U4) circle (0.05) node[above right] {$u_4$};
\draw[fill=black] (U5) circle (0.05) node[below right] {$u_5$};
\draw[fill=black] (U6) circle (0.05) node[below left] {$u_6$};
\draw[fill=black] (U7) circle (0.05) node[below right] {$u_7$};
\draw[blue, fill=blue] (V2) circle (0.05) node[above left] {$v_2$};
\draw[brown, fill=brown] (V9) circle (0.05) node[above left] {$v_9$};
\draw[brown, fill=brown] (V10) circle (0.05) node[above left] {$v_{10}$};

\draw[red, fill=red] (U8) circle (0.05) node[above] {$u_8$};
\draw[red, fill=red] (U9) circle (0.05) node[left] {$u_9$};
\draw[red, fill=red] (U10) circle (0.05) node[above right] {$u_{10}$};
\draw[red, fill=red] (U11) circle (0.05) node[above left] {$u_{11}$};
\draw[red, fill=red] (U12) circle (0.05) node[above right] {$u_{12}$};
\end{tikzpicture}
\end{center}
\caption{The polytope $P$ from Example~\ref{ex_A2} for $\delta = \frac{1}{10}$. For $\eps=3\delta$, the set $\V=\{v_2,u_3,\dots,u_7\}$ provides a strong $\eps$-approximate V-representation of $P$, where $v_2$ covers both $u_1$ and $u_2$. The cut with the half-space $H_\leq$ and the resulting new vertices are shown in red. The set $\V'=\{u_3,u_4,u_6,u_7,v_9,v_{10},u_{11},u_{12}\}$ computed by Algorithm~\ref{alg_basic_cut} is not a strong $\eps$-approximate V-representation of $P'=P\cap H$ since the vertices $u_1$ and $u_8$ of $P'$ are not covered.}\label{3dex}
\end{figure}
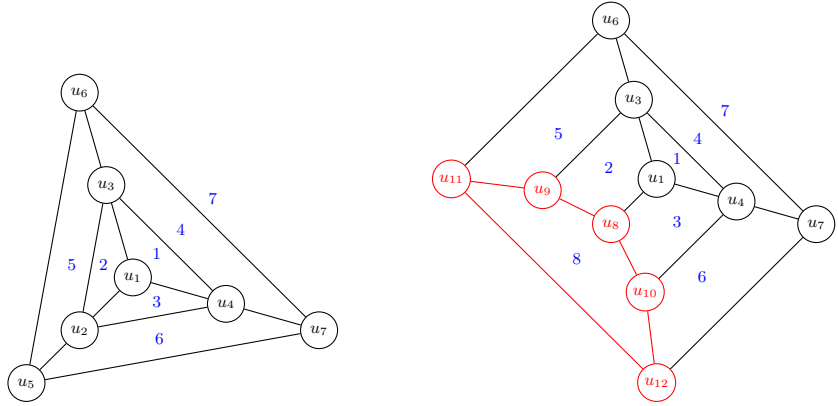
\begin{figure}[ht]
	\begin{center}
\begin{tikzpicture}[scale=.35,every node/.style={scale=0.7}]
	\coordinate (U1) at (0,0);
	\coordinate (U2) at (-2,-2);
	\coordinate (U3) at (-1,3.5);
	\coordinate (U4) at (3.5,-1);
	\coordinate (U5) at (-4,-4);
	\coordinate (U6) at (-2,7);
	\coordinate (U7) at (7,-2);
	\coordinate (H1) at (.9,.9);
	\coordinate (H2) at (-1.1,.5);
	\coordinate (H3) at (.9,-.9);
	\coordinate (H4) at (1.8,1.8);
	\coordinate (H5) at (-2.3,.5);
	\coordinate (H6) at (1,-2.3);
	\coordinate (H7) at (3,3);
	
	\node[shape=circle,draw=black] (N1) at (U1) {\small $u_1$};
	\node[shape=circle,draw=black] (N2) at (U2) {\small $u_2$};
	\node[shape=circle,draw=black] (N3) at (U3) {\small $u_3$};
	\node[shape=circle,draw=black] (N4) at (U4) {\small $u_4$};
	\node[shape=circle,draw=black] (N5) at (U5) {\small $u_5$};
	\node[shape=circle,draw=black] (N6) at (U6) {\small $u_6$};
	\node[shape=circle,draw=black] (N7) at (U7) {\small $u_7$};

	\draw
	(N1) edge[-] (N2)
	(N1) edge[-] (N3)
	(N1) edge[-] (N4)
	(N2) edge[-] (N5)
	(N3) edge[-] (N6)
	(N4) edge[-] (N7)
	(N2) edge[-] (N3)
	(N3) edge[-] (N4)
	(N4) edge[-] (N2)
	(N5) edge[-] (N6)
	(N6) edge[-] (N7)
	(N7) edge[-] (N5);
	\draw[blue] (H1) node {{\small$1$}};
	\draw[blue] (H2) node {{\small$2$}};
	\draw[blue] (H3) node {{\small$3$}};
	\draw[blue] (H4) node {{\small$4$}};
	\draw[blue] (H5) node {{\small$5$}};
	\draw[blue] (H6) node {{\small$6$}};
	\draw[blue] (H7) node {{\small$7$}};
\end{tikzpicture}
\hspace{1cm}
\begin{tikzpicture}[scale=.3, every node/.style={scale=0.7}]
	\coordinate (U1) at (0,0);
	\coordinate (U3) at (-1,3.5);
	\coordinate (U4) at (3.5,-1);
	\coordinate (U6) at (-2,7);
	\coordinate (U7) at (7,-2);
	\coordinate (U8) at (-2,-2);
	\coordinate (U9) at (-5,-.5);
	\coordinate (U10) at (-.5,-5);
	\coordinate (U11) at (-9,0);
	\coordinate (U12) at (0,-9);

	\coordinate (H1) at (.9,.9);
	\coordinate (H2) at (-2.1,.5);
	\coordinate (H3) at (.9,-1.9);
	\coordinate (H4) at (1.8,1.8);
	\coordinate (H5) at (-4.3,2);
	\coordinate (H6) at (2,-4.3);
	\coordinate (H7) at (3,3);
	\coordinate (H8) at (-3.5,-3.5);
	
	\node[shape=circle,draw=black] (N1) at (U1) {\small $u_1$};
	\node[shape=circle,draw=black] (N3) at (U3) {\small $u_3$};
	\node[shape=circle,draw=black] (N4) at (U4) {\small $u_4$};
	\node[shape=circle,draw=black] (N6) at (U6) {\small $u_6$};
	\node[shape=circle,draw=black] (N7) at (U7) {\small $u_7$};
	\node[red,shape=circle,draw=red] (N8) at (U8) {\small $u_8$};
	\node[red,shape=circle,draw=red] (N9) at (U9) {\small $u_9$};
	\node[red,shape=circle,draw=red] (N10) at (U10) {\small $\!u_{10}\!$};
	\node[red,shape=circle,draw=red] (N11) at (U11) {\small $\!u_{11}\!$};
	\node[red,shape=circle,draw=red] (N12) at (U12) {\small $\!u_{12}\!$};

	\draw
	(N1) edge[-] (N3)
	(N1) edge[-] (N4)
	(N1) edge[-] (N8)
	(N3) edge[-] (N6)
	(N4) edge[-] (N7)
	(N4) edge[-] (N10)
	(N3) edge[-] (N9)
	(N3) edge[-] (N4)
	(N8) edge[red,-] (N9)
	(N8) edge[red,-] (N10)
	(N12) edge[red,-] (N10)
	(N9) edge[red,-] (N11)
	(N6) edge[-] (N7)
	(N6) edge[-] (N11)
	(N7) edge[-] (N12)
	(N11) edge[red,-] (N12);
	\draw[blue] (H1) node {{\small$1$}};
	\draw[blue] (H2) node {{\small$2$}};
	\draw[blue] (H3) node {{\small$3$}};
	\draw[blue] (H4) node {{\small$4$}};
	\draw[blue] (H5) node {{\small$5$}};
	\draw[blue] (H6) node {{\small$6$}};
	\draw[blue] (H7) node {{\small$7$}};
	\draw[blue] (H8) node {{\small$8$}};
\end{tikzpicture}
\end{center}
\caption{Schlegel diagrams of $P$ (left) and $P'$ (right) from Example~\ref{ex_A2} showing the inequality-vertex incidence information.}\label{3dexschlegel}
\end{figure}

\begin{example} \label{ex_A2}
For some $\delta > 0$ we consider the polytope $P = \{x \in \R^3 \mid A x \leq \one\}$ with data
\begin{equation*}
	\begingroup
	\renewcommand*{\arraystretch}{1.2}
	A=\begin{pmatrix*}[r]
		\frac{\delta}{1+\delta} &\frac{\delta}{1+\delta} & \frac{1}{1+\delta}\\
	 -\frac{\delta}{1+\delta} & 0 & \frac{1}{1+\delta} \\
	 0 & -\frac{\delta}{1+\delta} & \frac{1}{1+\delta} \\
	 1 & 1 & 0 \\
	 -1 & 0 & 0 \\
	 0 & -1 & 0 \\
	 0 & 0 & -1
	\end{pmatrix*},
	\endgroup
\end{equation*}
which is illustrated in Figure~\ref{3dex}. The vertices of $P$ are
\begin{equation*}
	u_1 \!=\!\! \begin{pmatrix*}[c]
		0 \\ 0 \\ 1 + \delta
	\end{pmatrix*} \!\!,\;
	u_2 \!=\!\!\begin{pmatrix*}[r]
		-1 \\ -1 \\ 1
	\end{pmatrix*} \!\!,\;
	u_3 \!=\!\! \begin{pmatrix*}[r]
		-1  \\ 2 \\ 1
	\end{pmatrix*} \!\!,\;
	u_4 \!=\!\! \begin{pmatrix*}[r]
		2  \\ -1 \\ 1
	\end{pmatrix*} \!\!,\;
	u_5 \!=\!\! \begin{pmatrix*}[r]
		-1  \\ -1 \\ -1
	\end{pmatrix*} \!\!,\;
	u_6 \!=\!\! \begin{pmatrix*}[r]
		-1  \\ 2 \\ -1
	\end{pmatrix*} \!\!,\;
	u_7 \!=\!\! \begin{pmatrix*}[r]
		2  \\ -1 \\ -1
	\end{pmatrix*}\!\!.
\end{equation*}
We have
\begin{equation*}
	\begin{array}{llll}
	J_=(u_1) = \{1,2,3\}, &
	J_=(u_2) = \{2,3,5,6\}, &
	J_=(u_3) = \{1,2,4,5\}, &
	J_=(u_4) = \{1,3,4,6\}, \\
	J_=(u_5) = \{5,6,7\}, &
	J_=(u_6) = \{4,5,7\}, &
	J_=(u_7) = \{4,6,7\}, &
	\end{array}
\end{equation*}
compare the first Schlegel diagram in Figure~\ref{3dexschlegel}. Let $\V=\{v_2,\dots,v_7\}$ where we set 
\begin{equation*}
	v_2 = \begin{pmatrix*}[c]
		-1 \\ -1 \\ 1+3 \delta
	\end{pmatrix*}, \quad v_3 = u_3,\quad \dots \quad,  v_7 = u_7.
\end{equation*}
We have
\begin{equation*}
	J_=(v_2) = \{1, 5, 6\}, \qquad J_\geq(v_2) = \{ 1,2,3,5,6\}.
\end{equation*} 
One can easily verify that
\begin{equation*}
	v_2 \in (1+3\delta) P.
\end{equation*}
Thus, for $\eps = 3 \delta$, $\V$ is a strong $\eps$-approximate V-representation of $P$. Note that $v_2$ covers both $u_1$ and $u_2$.
Now add to $A$ the row 
\begin{equation*}
	A_8 = \begin{pmatrix*}[r]
		-4&-4&0
	\end{pmatrix*}
\end{equation*} 
and let $P'$ be the corresponding polyhedron, i.e.\ the intersection of $P$ and the half-space $H_\leq = \{x \in \R^3 \mid A_8 x \leq 1\}$. The vertices of $P'$ are $u_1, u_3, u_4, u_6, u_7$ as defined above as well as the new vertices 
\begin{equation*}
	\begingroup
	\renewcommand*{\arraystretch}{1.2}
	u_8\!=\!\! \begin{pmatrix*}[c]
		- \frac{1}{8} \\ -\frac{1}{8} \\ 1 + \frac{7}{8}\delta
	\end{pmatrix*} \!\!,\;
	u_9 \!=\!\!\begin{pmatrix*}[r]
		-1 \\ \frac{3}{4} \\ 1
	\end{pmatrix*} \!\!,\;
	u_{10} \!=\!\! \begin{pmatrix*}[r]
		\frac{3}{4} \\ -1 \\ 1
	\end{pmatrix*} \!\!,\;
	u_{11} \!=\!\! \begin{pmatrix*}[r]
		-1 \\ \frac{3}{4} \\ -1
	\end{pmatrix*} \!\!,\;
	u_{12} \!=\!\! \begin{pmatrix*}[r]
		\frac{3}{4} \\ -1 \\ -1
	\end{pmatrix*}
	\endgroup
\end{equation*}
with indices
\begin{equation*}
	\begin{array}{lll}
	J_=(u_8) = \{2,3,8\}, &
	J_=(u_9) = \{2,5,8\}, &
	J_=(u_{10}) = \{3,6,8\}, \\
	J_=(u_{11}) = \{5,7,8\}, &
	J_=(u_{12}) = \{6,7,8\},
	\end{array}
\end{equation*}
compare the second Schlegel diagram in Figure~\ref{3dexschlegel}.
An update $\V'$ of the set $\V$ by Algorithm~\ref{alg_basic_cut} contains the points $v_3, v_4, v_6, v_7$. The points $v_2$ and $v_5$ are cut off by the algorithm if and only if $\eps < 7$, thus let us assume $3 \delta = \eps < 7$. A valid choice of  new points added to $\V$ is $v_{11} = u_{11}$, $v_{12} = u_{12}$,
\begin{equation*}
	\begingroup
	\renewcommand*{\arraystretch}{1.2}
	v_9=\begin{pmatrix*}[c]
		-1 \\ \frac{3}{4} \\ 1 + \frac{5}{4} \delta
	\end{pmatrix*} \in \conv\{v_2,v_3\} \cap H_0\quad \text{and}\quad
	v_{10}= \begin{pmatrix*}[c]
		\frac{3}{4} \\ -1 \\ 1 + \frac{5}{4} \delta
	\end{pmatrix*}\in \conv\{v_2,v_4\} \cap H_0. 
	\endgroup
\end{equation*}
For $i \in \{3,4,6,7,11,12\}$ we have $J_\geq(v_i) = J_=(u_i)$ as those points $v_i=u_i$ are vertices of $P'$. Moreover we have
\begin{equation*}
	J_\geq(v_9) = \{1,2,5,8\}, \qquad J_\geq(v_{10})=\{1,3,6,8\}.
\end{equation*}
We see that there is no $v \in \V'$ with $J_=(u_1) = \{1,2,3\} \subseteq J_\geq (v)$, i.e.\ the vertex $u_1$ (and likewise the vertex $u_8$) of $P'$ is not covered by some point in $\V'$. Thus $\V'$ is not a strong $\eps$-approximate V-representation of $P'$.
\end{example}

\section{The approximate double description method}\label{sec_addm}

In this section we define an algorithm, called {\em approximate double description method}, which is based on repeated application of the basic cutting scheme of Algorithm \ref{alg_basic_cut}. We make the following modifications:
\begin{enumerate}[(i)]
	\item We formulate the algorithm using a graph $G=(V,E)$ the reason of which is to have a formulation which is convenient to be compared with another algorithm in the next section. The points in $\R^d$ are expressed by a coordinate function $c:V\to \R^d$.
	\item Instead of $J_\geq(v)$ we use an index set $I(v)$ which is recursively defined.
	\item We allow a more general partitioning of the space $\R^d$, because this will be useful in Section \ref{sec_imprec} where imprecise arithmetic is discussed.
\end{enumerate}

\begin{algorithm}[ht]
\DontPrintSemicolon
\SetKwInOut{Input}{input}\SetKwInOut{Output}{output}
\Input{$A \in \R^{m\times d}$ such that $P \colonequals  \{x \mid A x \leq \one\}$ is a polytope and $S \colonequals  \{x \mid A_{[d+1]} x \leq \one\}$ is a simplex; tolerance $\varepsilon \geq 0$}
\Output{some graph $G=(V,E)$ and a coordinate function $c: V \to \R^d$ such that $P \subseteq \conv c(V) \subseteq (1+\varepsilon) P$}
\Begin{
	initialize the graph $G = G(V,E)$ by $K_{d+1}$ with nodes $V=\{v_1,\dots,v_{d+1}\}$\;
	compute vertices $\{u_1,\dots,u_{d+1}\}$ of $(1+\frac{\varepsilon}{2})S$ and set $c(v_i) \leftarrow u_i$ for $i \in \{1,\dots,d+1\}$\;	
    \For{$i\leftarrow d+2$ {\textbf{to}} $m$}{
		partition $V$ into disjoint sets $V_- \neq \emptyset$, $V_0$, $V_+$ such that
		$V_- \subseteq \{v \in V \mid A_i c(v) < 1 + \frac{\varepsilon}{2} \}$,
		$V_+ \subseteq \{v \in V \mid A_i c(v) > 1 + \frac{\varepsilon}{2} \}$,
		$V_0 \subseteq \{v \in V \mid 1 \leq A_i c(v) \leq 1 + \varepsilon \}$\;	
		\For{$e = uw \in E$ satisfying $u \in V_-$ and $w \in V_+$}{
			add a new node $v=v(u,w)$ to $V_0$ and $V$\;	
			$I(v) \leftarrow I(u)\cap I(w)$\; 
			define $c(v)$ by choosing a point in the line segment between $c(u)$ and $c(w)$ such that $1 \leq A_i c(v) \leq 1 + \varepsilon$\;	
		}
		\For{$u \in V_0$}{
			$I(u) \leftarrow I(u) \cup \{i\}$
		}
		remove $V_+$ from $V$\;
		$E \leftarrow \{ uw \mid\; u,v \in V,\; |I(u) \cap I(w)| \geq d-1 \}$\;			  
    }
}
\caption{Approximate double description method}
\label{addm}
\end{algorithm}

The approximate double description method is defined in Algorithm~\ref{addm}. Since the correctness result for the basic cutting scheme formulated in Theorem \ref{th_basic_cut} of the previous section is of only theoretical nature, we refrain from extending these results to the more general setting. Instead we focus in the next sections on the $2$- and $3$-dimensional case, where correctness can be shown by graph theoretical methods without any impracticable conditions like {\em h-correctness}.

\section{Algorithms for 2- and 3-polytopes} \label{sec_23}

In this section we prove correctness of the approximate double description method  for polytopes in dimensions $2$ and $3$. To this end we introduce another algorithm, called the {\em shortcut algorithm}, which is shown to be related to the approximate double description method in the following sense: If the shortcut algorithm solves the approximate vertex enumeration problem correctly then so does the approximate double description method. So we will prove correctness of the shortcut algorithm and obtain correctness of the approximate double description problem as a corollary. The numerical results in the next section show that the shortcut algorithm is not only of theoretical but also of practical importance. 

The shortcut algorithm is based on a construction of a planar graph. This construction starts with the complete graph $K_{d+1}$ and maintains planarity. Since $K_{d+1}$ is not planar for $d \geq 4$, this method is restricted to dimension $d \leq 3$.

Let us recall some basic concepts from graph theory, where we follow the book by Diestel \cite{Diestel05}. A {\em graph} is a pair $G=(V,E)$, where $E$ (the set of edges) is a subset of $2$-element subsets of $V$ (the set of vertices). For $e = \{u,v\} \in E$ we write $e=uv$.
The graph $G'=(V',E')$ is a {\em subgraph} of $G=(V,E)$ if $V' \subseteq V$ and $E' \subseteq E$. A {\em walk} in a graph $G$ is a non-empty alternating sequence $v_0,e_0,v_1,e_1,\dots,e_{k-1},v_k$ of vertices and edges in $G$ such that $e_i = v_iv_{i+1}$ for all $i<k$. If $v_0 = v_k$, the walk is called {\em closed}. 
A walk with all vertices being distinct is called a {\em path}.
A maximal connected subgraph of $G$ is called a {\em component}.

A {\em plane graph} is a graph $G=(V,E)$ together with an embedding of $G$ into the sphere $S^2$. This means that (i) the vertices are points in $S^2$, (ii) every edge is an arc between two vertices, (iii) different edges have different sets of endpoints, (iv) the interior of an edge contains no vertex and no point of another edge. A plane graph can be seen as a drawing of an (abstract) graph $G$ on the sphere. A graph $G=(V,E)$ is called {\em planar} if such an embedding exists. A plane graph partitions the sphere into regions, called {\em faces} which are bounded by arcs. According to \cite[Lemma 4.2.2]{Diestel05} an arc $uw \in E$ lies on the frontier of either one or two faces. These faces are called {\em incident} with $uw$. A {\em bounding walk} of a face $f$ is a closed walk $W$ in $G$ such that
\begin{enumerate}[(i)]
	\item an edge $e$ is at most twice contained in $W$,
	\item an edge $e$ contained exactly once in $W$ is incident with $f$ and with another face $f'$,
	\item an edge $e$ contained exactly twice in $W$ is incident with $f$ only,
	\item $W$ is maximal with these properties.
\end{enumerate} 
The concept of bounding walk is illustrated in Figure \ref{fig_bw}.

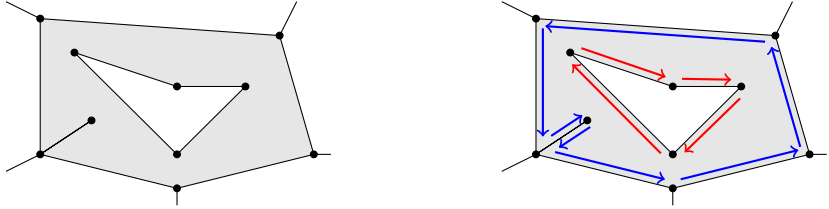
\begin{figure}[ht]
	\begin{center}
\begin{tikzpicture}[scale=.9]
	\coordinate (V1)  at (0,0);
	\coordinate (V01) at (-1/2,-1/4);
	\coordinate (V2)  at (0,2);
	\coordinate (V02) at (-1/2,9/4);
	\coordinate (V3)  at (7/2,7/4);
	\coordinate (V03) at (15/4,9/4);
	\coordinate (V4)  at (4,0);
	\coordinate (V04) at (17/4,0);
	\coordinate (V5)  at (2,-1/2);
	\coordinate (V05) at (2,-3/4);
	\coordinate (V6)  at (3/4,1/2);	
	\coordinate (VA)  at (3,1);
	\coordinate (VB)  at (2,0);
	\coordinate (VC)  at (1/2,3/2);
	\coordinate (VD)  at (2,1);	
	\draw [fill=gray!20!white] (V1) -- (V6) -- (V1) -- (V2) -- (V3) -- (V4) -- (V5) -- (V1);	
	\draw [fill=white] (VA) -- (VB) -- (VC) -- (VD) -- (VA);	
	\draw (V1) -- (V01);
	\draw (V2) -- (V02);
	\draw (V3) -- (V03);
	\draw (V4) -- (V04);
	\draw (V5) -- (V05);
	\draw[fill=black] (V1) circle (0.05);
	\draw[fill=black] (V2) circle (0.05);
	\draw[fill=black] (V3) circle (0.05);	
	\draw[fill=black] (V4) circle (0.05);
	\draw[fill=black] (V5) circle (0.05);
	\draw[fill=black] (V6) circle (0.05);	
	\draw[fill=black] (VA) circle (0.05);
	\draw[fill=black] (VB) circle (0.05);
	\draw[fill=black] (VC) circle (0.05);	
	\draw[fill=black] (VD) circle (0.05);
\end{tikzpicture}
\hspace{2cm}
\begin{tikzpicture}[scale=.9]
	\coordinate (V1)  at (0,0);
	\coordinate (V01) at (-1/2,-1/4);
	\coordinate (V2)  at (0,2);
	\coordinate (V02) at (-1/2,9/4);
	\coordinate (V3)  at (7/2,7/4);
	\coordinate (V03) at (15/4,9/4);
	\coordinate (V4)  at (4,0);
	\coordinate (V04) at (17/4,0);
	\coordinate (V5)  at (2,-1/2);
	\coordinate (V05) at (2,-3/4);
	\coordinate (V6)  at (3/4,1/2);	
	\coordinate (VA)  at (3,1);
	\coordinate (VB)  at (2,0);
	\coordinate (VC)  at (1/2,3/2);
	\coordinate (VD)  at (2,1);	
	\draw [fill=gray!20!white] (V1) -- (V6) -- (V1) -- (V2) -- (V3) -- (V4) -- (V5) -- (V1);	
	\draw [fill=white] (VA) -- (VB) -- (VC) -- (VD) -- (VA);	
	\draw (V1) -- (V01);
	\draw (V2) -- (V02);
	\draw (V3) -- (V03);
	\draw (V4) -- (V04);
	\draw (V5) -- (V05);
	\DoubleLine{3pt}{80}{10pt}{17}{V6}{V1}{->,blue,thick}{}{<-,blue,thick}
	\SingleLine{5pt}{35}{8pt}{21}{V2}{V1}{->,blue,thick}{}
	\SingleLine{5pt}{35}{5pt}{35}{V3}{V2}{->,blue,thick}{}
	\SingleLine{5pt}{35}{5pt}{35}{V4}{V3}{->,blue,thick}{}
	\SingleLine{5pt}{35}{5pt}{35}{V5}{V4}{->,blue,thick}{}
	\SingleLine{8pt}{21}{5pt}{35}{V1}{V5}{->,blue,thick}{}	
	\SingleLine{5pt}{40}{5pt}{35}{VD}{VA}{->,red,thick}{}
	\SingleLine{5pt}{40}{5pt}{35}{VC}{VD}{->,red,thick}{}
	\SingleLine{5pt}{40}{5pt}{35}{VB}{VC}{->,red,thick}{}
	\SingleLine{5pt}{40}{5pt}{35}{VA}{VB}{->,red,thick}{}
	\draw[fill=black] (V1) circle (0.05);
	\draw[fill=black] (V2) circle (0.05);
	\draw[fill=black] (V3) circle (0.05);	
	\draw[fill=black] (V4) circle (0.05);
	\draw[fill=black] (V5) circle (0.05);
	\draw[fill=black] (V6) circle (0.05);	
	\draw[fill=black] (VA) circle (0.05);
	\draw[fill=black] (VB) circle (0.05);
	\draw[fill=black] (VC) circle (0.05);	
	\draw[fill=black] (VD) circle (0.05);
\end{tikzpicture}

\end{center}
\caption{Left: A face $f$ of a plane graph. Right: The two bounding walks of the face $f$.}\label{fig_bw}
\end{figure}

The formulation of the {\em shortcut algorithm} introduced in this section is almost identical for dimension $d=2$ and dimension $d=3$. The only exception is that we have to distinguish between two types of faces, called {\em valid} and {\em invalid faces}, in case of $d=2$, while for $d=3$ all faces are valid. In the shortcut algorithm we use tacitly the following initialization and update rules:
\begin{enumerate}[({I}1)]
	\item[(I1)] One of the two faces of $K_3$ is set to `valid', the other one to `invalid'.
	\item[(I2)] All the four faces of $K_4$ are set to `valid'.
	\item[(U1)] Merging two faces $f_1, f_2$ (by deleting edges) results in a valid face if both $f_1$ and $f_2$ are valid. Otherwise it results into an invalid face.
	\item[(U2)] Splitting an (in)valid face (by adding edges) results in two (in)valid faces.
\end{enumerate}
Thus, in the Algorithms \ref{core_ga} and \ref{ga} below, there will be exactly one invalid face for $d=2$ (invalid faces are not splitted) and all faces are valid for $d=3$.

Algorithm \ref{core_ga} is an abstract variant of the shortcut algorithm. It can be seen as the graph theoretical core of the actual shortcut algorithm, which is introduced later in Algorithm \ref{ga} by specifying a rule for partitioning the set of vertices into three disjoint subsets. The term ``shortcut'' refers to line 10 of Algorithm \ref{core_ga}, where ``shortcuts'' along bounding walks are inserted. The algorithm is illustrated in Figures \ref{ex_core_ga_2} and \ref{ex_core_ga_3}. Algorithm \ref{core_addm} can be seen as the graph theoretical core of the approximate double description method. It is explained by an example in Figure \ref{ex_core_addm}. 

We will show in Theorem \ref{th_rel_core} that Algorithm \ref{core_ga} constructs a subgraph of the graph computed by Algorithm \ref{core_addm}. This connection will be used later in Corollary \ref{cor_rel} to derive correctness of the approximate double description method from correctness of the shortcut algorithm. The latter is proven directly, see Theorems \ref{th_ga_2} and \ref{th_ga_3} below.

\begin{algorithm}[hpt]
\DontPrintSemicolon
\SetKwInOut{Input}{input}\SetKwInOut{Output}{output}
\Input{$d \in \{2,3\}$, $m \in \mathbb{N}$}
\Output{some plane graph $\bar G=(\bar V,\bar E)$}
\Begin{
	initialize the (plane) graph $\bar G = \bar G(\bar V,\bar E)$ by $K_{d+1}$ (taking into account (I1) and (I2)) with nodes $\bar V=\{v_1,\dots,v_{d+1}\}$\;
    \For{$i\leftarrow d+2$ {\textbf{to}} $m$}{
		partition $\bar V$ into disjoint sets $\bar V_- \neq \emptyset$, $\bar V_0$, $\bar V_+$\;
		\For{$e = uw \in \bar E$ satisfying $u \in \bar V_-$ and $w \in \bar V_+$}{
			add a new node $v\colonequals v(u,w)$ to $\bar V_0$ and $\bar V$\;			
			in $\bar E$ replace $uw$ by two new edges $uv$ and $vw$\; 			
		}
		\For{$u_0 u_+, w_0w_+  \in E$ with $u_0,w_0 \in \bar V_0$, $u_0 \neq w_0$, $u_0 w_0 \not\in \bar E$, $u_+,w_+ \in \bar V_+$}{
			\If{there is a walk along a bounding walk of some valid face $f$ from $u_0$ to $w_0$ with all intermediate nodes belonging to $\bar V_+$ or all intermediate nodes belonging to $\bar V_-$}
			{
				$\bar E \leftarrow \bar E \cup \{u_0w_0\}$\;
			} 
		}
		remove all edges incident with some $v \in \bar V_+$ from $\bar E$\;
		remove $\bar V_+$ from $\bar V$\;
		remove multiples of edges from $\bar E$		  
    }
}
\caption{Core of the shortcut algorithm for approximate vertex enumeration}
\label{core_ga}
\end{algorithm}

\begin{algorithm}[hpt]
\DontPrintSemicolon
\SetKwInOut{Input}{input}\SetKwInOut{Output}{output}
\Input{$d \in \{2,3\}$, $m \in \mathbb{N}$}
\Output{some graph $\hat G=(\hat V,\hat E)$}
\Begin{
	initialize the graph $\hat G = \hat G(\hat V,\hat E)$ by $K_{d+1}$ with nodes $\hat V=\{v_1,\dots,v_{d+1}\}$\;
	define an incidence function by setting	$I(v_i) \leftarrow \{1,\dots,d+1\} \setminus \{i\}$ for all $i \in \{1,\dots,d+1\}$\;
    \For{$i\leftarrow d+2$ {\textbf{to}} $m$}{
		partition $\hat V$ into disjoint sets $\hat V_- \neq \emptyset$, $\hat V_0$, $\hat V_+$\;		
		\For{$e = uw \in \hat E$ satisfying $u \in \hat V_-$ and $w \in \hat V_+$}{
			add a new node $v=v(u,w)$ to $\hat V_0$ and $\hat V$\;	
			$I(v) \leftarrow I(u)\cap I(w)$
		}
		\For{$u \in \hat V_0$}{
			$I(u) \leftarrow I(u) \cup \{i\}$
		}
		remove $\hat V_+$ from $\hat V$\;
		$\hat E \leftarrow \{ uv \mid\; u,v \in \hat V,\; |I(u) \cap I(v)| \geq d-1 \}$\;			  
    }
}
\caption{Core of the approximate double description method}
\label{core_addm}
\end{algorithm}

In what follows, we run Algorithms \ref{core_ga} and \ref{core_addm} in parallel and compare the graphs $\bar G$ and $\hat G$ after each (outer) iteration. We assume the {\em same input} for both algorithms. Moreover we use the {\em same partitioning rule}: Whenever $\bar V \subseteq \hat V$, the partitioning rule results in
$$ \bar V_- \subseteq \hat V_-,\quad \bar V_0 \subseteq \hat V_0,\quad \bar V_+ \subseteq \hat V_+.$$ 

\begin{theorem}\label{th_rel_core} For the same input and using the same partitioning rule, Algorithm \ref{core_ga} computes a plane subgraph of the graph computed by
Algorithm \ref{core_addm}.	
\end{theorem}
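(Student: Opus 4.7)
My plan is to prove the theorem by induction on the iteration index $i$, maintaining three invariants after the $i$-th pass through the outer loop: (I1) $\bar V\subseteq\hat V$; (I2) every edge $uw\in\bar E$ satisfies $|I(u)\cap I(w)|\geq d-1$, which is exactly the edge-defining condition for $\hat E$; and, when $d=3$, (I3) each valid face $f$ of the plane graph $\bar G$ carries a distinguished index $j_f\in[i]$ such that $j_f\in I(v)$ for every vertex $v$ on a bounding walk of $f$. The base case $i=d+1$ is immediate: both algorithms produce $K_{d+1}$ with $I(v_k)=[d+1]\setminus\{k\}$, so any two initial vertices share exactly $d-1$ indices; for $d=3$ the four triangular faces of $K_4$ are each opposite a unique vertex $v_k$, so taking $j_f=k$ validates (I3). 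For $d=2$ the face invariant is not needed, as will be clear below.

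For the inductive step I would first verify (I1): since $\bar E\subseteq\hat E$ by induction and the partitioning rule ensures $\bar V_\star\subseteq\hat V_\star$ for $\star\in\{-,0,+\}$, every crossing edge of $\bar G$ is also a crossing edge in $\hat G$, and both algorithms introduce the same new nodes $v(u,w)$ with identical incidence $I(v)=I(u)\cap I(w)$. For (I2) I would split the edges of $\bar E$ after iteration $i$ into three kinds: edges inherited from the previous iteration, new edges $uv$ connecting a $\bar V_-$-vertex to a just-inserted $\bar V_0$-vertex, and walk-condition edges $u_0w_0$ between two $\bar V_0$-vertices. Inherited edges cause no trouble, because the only $I$-update is adjunction of $i$ to vertices in $\bar V_0$ and this cannot decrease $|I(u)\cap I(w)|$. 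A new edge $uv$ with $v=v(u,w')$ satisfies $I(u)\cap I(v)=I(u)\cap I(w')$ (using $i\notin I(u)$ because $i$ is being introduced only now), so the bound transports from $uw'\in\bar E^{(i-1)}$. A walk-condition edge $u_0w_0$ has $i\in I(u_0)\cap I(w_0)$ after the update; for $d=2$ this alone realizes $d-1=1$, and for $d=3$ invariant (I3) supplies a second common index $j_f\neq i$ via the face $f$ furnished by the walk condition.

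The main obstacle is maintaining (I3) for $d=3$. I would track each valid face of $\bar G^{(i-1)}$ through the four operations of iteration $i$. Inserting a node $v(u,w)$ on a boundary edge of $f$ preserves $j_f$ because $I(v)\supseteq I(u)\cap I(w)\ni j_f$. Each walk-condition edge is a chord of some face $f$ whose endpoints lie on the bounding walk of $f$, so it splits $f$ into two sub-faces whose bounding walks are sub-walks of $f$'s and therefore both inherit $j_f$. The delicate case is the simultaneous removal of $\bar V_+$ together with all incident edges, because several ``slivers'' of old faces abutting $\bar V_+$ may coalesce into a single new region. I plan to argue, using planarity of $\bar G$ together with the systematic addition of walk-condition edges, that the bounding walk of every such merged face consists entirely of $\bar V_0$-vertices: whenever the merger would span what used to be several different old faces, the previously inserted walk-condition chords already enclose the affected portion, leaving only $\bar V_0$-vertices on its new boundary. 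The merged face can then be assigned $j_{f'}=i$, which lies in $I(v)$ for every $v\in\bar V_0$ after the update. Once (I3) is reestablished, the induction closes and yields $\bar V\subseteq\hat V$ and $\bar E\subseteq\hat E$ at every iteration, so $\bar G$ is a plane subgraph of $\hat G$.
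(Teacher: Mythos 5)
Your plan follows the paper's own proof essentially verbatim: the same induction with invariants $\bar V\subseteq\hat V$, $\bar E\subseteq\hat E$ (which you recast equivalently as the intersection bound $|I(u)\cap I(w)|\geq d-1$ on edges), and a per-face common-index invariant, which is precisely the paper's $N_f\geq d-2$, trivial for $d=2$ and meaningful only for $d=3$, together with the same casewise tracking of faces through the subdivision/chord-insertion/merging steps of each iteration. In particular your observations that a walk-condition chord $u_0w_0$ gains the new index $i$ and (for $d=3$) borrows $j_f$ from the ambient face, and that a merged face has all boundary vertices in $\bar V_0$ so that $i\in I(v)$ throughout, are exactly the two steps the paper uses to close the induction.
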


\begin{proof} In this proof we compare sets computed by Algorithms \ref{core_ga} and \ref{core_addm}. All these comparisons are made either directly before the outer for loop (referred to as ``after initialization'') or at the end of the outer loop with respect to the same iteration index $i$ (referred to as ``after iteration $i$'').

	We show by induction that after each outer iteration:
	\begin{enumerate}[(i)]
		\item $\bar V \subseteq \hat V$,
		\item $\bar E \subseteq \hat E$,
		\item for the vertices $V(f) \subseteq \bar V$ incident with a face $f$ of $\bar G$, 
	$ N_f\colonequals \left|\bigcap\{I(v)\mid v \in V(f)\}\right| \geq d-2$.
	\end{enumerate}

These three statements are obviously true after initialization. Assume the statements hold after iteration $i$. Let $f$ be a face of $\bar G$ after iteration $i+1$. If the vertices $V(f)$ of $f$ are vertices of a face $f_0$ of $\bar G$ after iteration $i$, then the claim follows as incidence sets $I(v)$ can only get bigger during iteration $i+1$. This case covers all new faces which were created by adding edges to $\bar G$ in the second inner loop of Algorithm \ref{core_ga}. In the next case we consider a face $f$ which arose from a face $f_0$ by adding new vertices in the first inner loop of Algorithm \ref{core_ga}. 
Since (i) holds after iteration $i$, applying the same partitioning rule in iteration $i+1$ yields 
$\bar V_+ \subseteq \hat V_+$ and $\bar V_- \subseteq \hat V_-$. Therefore, if a new vertex $v$ is added to $\bar V_0$ and $\bar V$ in the first inner loop, this vertex is also added to $\hat V_0$ and $\hat V$.
Thus, for every new vertex $v$ we have $I(v) \supseteq I(u) \cap I(w)$, where $u,w \in V(f)$, which yields that $N_f \geq d-2$ is maintained. In the remaining case, the face $f$ arose from deleting edges and vertices after the second inner loop. Then we have $V(f) \subseteq \bar V_0 \subseteq \hat V_0$. Thus $i+1 \in \bigcap\{I(v)\mid v \in V(f)\}$ and hence $N_f \geq 1 \geq d-2$. We have shown that (iii) and (i) hold after iteration $i+1$. 

Let $e=u_0w_0$ be an edge added in iteration $i+1$ to $\bar E$. Then $u_0,w_0$ belong to the same face $f$ of $\bar G$ and thus $|I(u_0) \cap I(w_0)| \geq d-2$ by (iii). We also have $u_0,w_0 \in \bar V_0 \subseteq \hat V_0$. Thus, the index $i+1$ is added to $I(u_0) \cap I(w_0)$ which yields $|I(u_0) \cap I(w_0)| \geq d-1$. Therefore, the edge $u_0w_0$ is also added to $\hat E$.

Since multiples of edges, which can be generated in the part after the second inner loop, are removed, $\bar G$ is a graph. It remains to show that $\bar G$ is planar. This follows by induction because $K_3$ and $K_4$ are planar and new edges are only inserted within faces.
\end{proof}

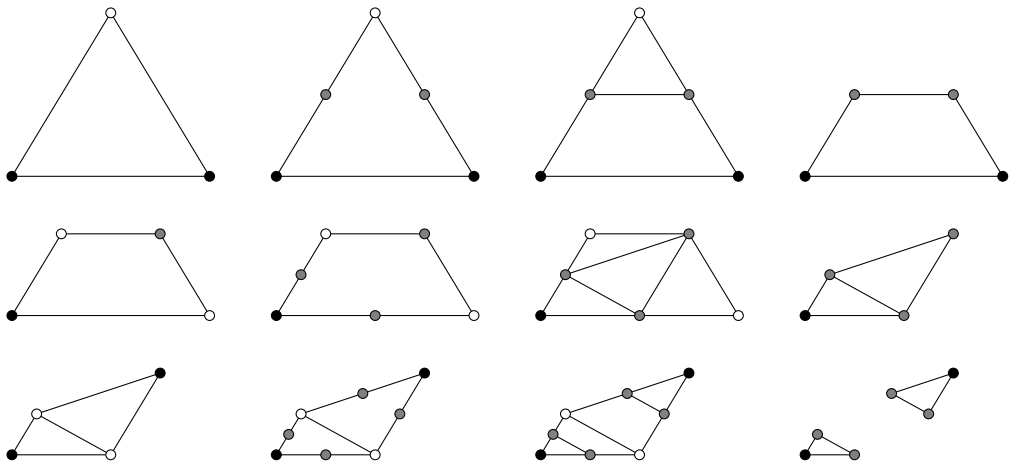
\begin{figure}[hpt]
	\def\scalefactor{1.3}
	\def\spacebetweenpictures{.5cm}
	\begin{center}
\begin{tikzpicture}[scale=\scalefactor]
	\coordinate (V1)  at (0,0);
	\coordinate (V2)  at (2,0);
	\coordinate (V3)  at (1,5/3);	
	\draw (V1) -- (V2) -- (V3) -- (V1);	
	\draw[fill=black] (V1) circle (0.05);
	\draw[fill=black]  (V2) circle (0.05);
	\draw[fill=white] (V3) circle (0.05);	
\end{tikzpicture}
\hspace{\spacebetweenpictures}
\begin{tikzpicture}[scale=\scalefactor]
	\coordinate (V1)  at (0,0);
	\coordinate (V2)  at (2,0);
	\coordinate (V3)  at (1,5/3);	
	\coordinate (V4)  at (1/2,5/6); 
	\coordinate (V5)  at (3/2,5/6); 
	\draw[] (V1) -- (V2) -- (V3) -- (V1);	
	\draw[fill=black] (V1) circle (0.05);
	\draw[fill=black]  (V2) circle (0.05);
	\draw[fill=white] (V3) circle (0.05);	
	\draw[fill=gray]  (V4) circle (0.05);
	\draw[fill=gray]  (V5) circle (0.05);
\end{tikzpicture}
\hspace{\spacebetweenpictures}
\begin{tikzpicture}[scale=\scalefactor]
	\coordinate (V1)  at (0,0);
	\coordinate (V2)  at (2,0);
	\coordinate (V3)  at (1,5/3);	
	\coordinate (V4)  at (1/2,5/6); 
	\coordinate (V5)  at (3/2,5/6); 
	\draw[] (V1) -- (V2) -- (V3) -- (V1);	
	\draw[] (V4) -- (V5);
	\draw[fill=black] (V1) circle (0.05);
	\draw[fill=black]  (V2) circle (0.05);
	\draw[fill=white] (V3) circle (0.05);	
	\draw[fill=gray]  (V4) circle (0.05);
	\draw[fill=gray]  (V5) circle (0.05);
\end{tikzpicture}
\hspace{\spacebetweenpictures}
\begin{tikzpicture}[scale=\scalefactor]
	\coordinate (V1)  at (0,0);
	\coordinate (V2)  at (2,0);
	\coordinate (V3)  at (1,5/3);	
	\coordinate (V4)  at (1/2,5/6); 
	\coordinate (V5)  at (3/2,5/6); 
	\draw[] (V1) -- (V2) -- (V5) -- (V4) -- (V1);	
	\draw[fill=black] (V1) circle (0.05);
	\draw[fill=black]  (V2) circle (0.05);
	\draw[white,fill=white] (V3) circle (0.05);	
	\draw[fill=gray]  (V4) circle (0.05);
	\draw[fill=gray]  (V5) circle (0.05);
\end{tikzpicture}

\vspace{-.5cm}
\begin{tikzpicture}[scale=\scalefactor]
	\coordinate (V1)  at (0,0);
	\coordinate (V2)  at (2,0);
	\coordinate (V3)  at (1,5/3);	
	\coordinate (V4)  at (1/2,5/6); 
	\coordinate (V5)  at (3/2,5/6); 
	
	\draw[] (V1) -- (V2) -- (V5) -- (V4) -- (V1);	
	\draw[fill=black] (V1) circle (0.05);
	\draw[fill=white]  (V2) circle (0.05);
	\draw[white,fill=white] (V3) circle (0.05);	
	\draw[fill=white]  (V4) circle (0.05);
	\draw[fill=gray]  (V5) circle (0.05);
\end{tikzpicture}
\hspace{\spacebetweenpictures}
\begin{tikzpicture}[scale=\scalefactor]
	\coordinate (V1)  at (0,0);
	\coordinate (V2)  at (2,0);
	\coordinate (V3)  at (1,5/3);	
	\coordinate (V4)  at (1/2,5/6); 
	\coordinate (V5)  at (3/2,5/6); 
	\coordinate (V6)  at (1/4,5/12); 
	\coordinate (V7)  at (1,0); 
	\draw[] (V1) -- (V2) -- (V5) -- (V4) -- (V1);	
	\draw[fill=black] (V1) circle (0.05);
	\draw[fill=white]  (V2) circle (0.05);
	\draw[white,fill=white] (V3) circle (0.05);	
	\draw[fill=white]  (V4) circle (0.05);
	\draw[fill=gray]  (V5) circle (0.05);
	\draw[fill=gray]  (V6) circle (0.05);
	\draw[fill=gray]  (V7) circle (0.05);
\end{tikzpicture}
\hspace{\spacebetweenpictures}
\begin{tikzpicture}[scale=\scalefactor]
	\coordinate (V1)  at (0,0);
	\coordinate (V2)  at (2,0);
	\coordinate (V3)  at (1,5/3);	
	\coordinate (V4)  at (1/2,5/6); 
	\coordinate (V5)  at (3/2,5/6); 
	\coordinate (V6)  at (1/4,5/12); 
	\coordinate (V7)  at (1,0); 
	\draw[] (V1) -- (V2) -- (V5) -- (V4) -- (V1);	
	\draw[] (V5) -- (V6) -- (V7) -- (V5);	
	\draw[fill=black] (V1) circle (0.05);
	\draw[fill=white]  (V2) circle (0.05);
	\draw[white,fill=white] (V3) circle (0.05);	
	\draw[fill=white]  (V4) circle (0.05);
	\draw[fill=gray]  (V5) circle (0.05);
	\draw[fill=gray]  (V6) circle (0.05);
	\draw[fill=gray]  (V7) circle (0.05);
\end{tikzpicture}
\hspace{\spacebetweenpictures}
\begin{tikzpicture}[scale=\scalefactor]
	\coordinate (V1)  at (0,0);
	\coordinate (V2)  at (2,0);
	\coordinate (V3)  at (1,5/3);	
	\coordinate (V4)  at (1/2,5/6); 
	\coordinate (V5)  at (3/2,5/6); 
	\coordinate (V6)  at (1/4,5/12); 
	\coordinate (V7)  at (1,0); 
	\draw[] (V1) -- (V7) -- (V5) -- (V6) -- (V1);	
	\draw[] (V7) -- (V6);	
	\draw[fill=black] (V1) circle (0.05);
	\draw[white,fill=white]  (V2) circle (0.05);
	\draw[white,fill=white] (V3) circle (0.05);	
	\draw[white,fill=white]  (V4) circle (0.05);
	\draw[fill=gray]  (V5) circle (0.05);
	\draw[fill=gray]  (V6) circle (0.05);
	\draw[fill=gray]  (V7) circle (0.05);
\end{tikzpicture}

\vspace{-.5cm}
\begin{tikzpicture}[scale=\scalefactor]
	\coordinate (V1)  at (0,0);
	\coordinate (V2)  at (2,0);
	\coordinate (V3)  at (1,5/3);	
	\coordinate (V4)  at (1/2,5/6); 
	\coordinate (V5)  at (3/2,5/6); 
	\coordinate (V6)  at (1/4,5/12); 
	\coordinate (V7)  at (1,0); 
	\draw[] (V1) -- (V7) -- (V5) -- (V6) -- (V1);	
	\draw[] (V7) -- (V6);	
	\draw[fill=black] (V1) circle (0.05);
	\draw[white,fill=white]  (V2) circle (0.05);
	\draw[white,fill=white] (V3) circle (0.05);	
	\draw[white,fill=white]  (V4) circle (0.05);
	\draw[fill=black]  (V5) circle (0.05);
	\draw[fill=white]  (V6) circle (0.05);
	\draw[fill=white]  (V7) circle (0.05);
\end{tikzpicture}
\hspace{\spacebetweenpictures}
\begin{tikzpicture}[scale=\scalefactor]
	\coordinate (V1)  at (0,0);
	\coordinate (V2)  at (2,0);
	\coordinate (V3)  at (1,5/3);	
	\coordinate (V4)  at (1/2,5/6); 
	\coordinate (V5)  at (3/2,5/6); 
	\coordinate (V6)  at (1/4,5/12); 
	\coordinate (V7)  at (1,0); 
	\coordinate (V8)  at (1/2,0); 
	\coordinate (V9)  at (1/8,5/24); 
	\coordinate (V10)  at (5/4,5/12); 
	\coordinate (V11)  at (7/8,15/24); 
	\draw[] (V1) -- (V7) -- (V5) -- (V6) -- (V1);	
	\draw[] (V7) -- (V6);	
	\draw[fill=black] (V1) circle (0.05);
	\draw[white,fill=white]  (V2) circle (0.05);
	\draw[white,fill=white] (V3) circle (0.05);	
	\draw[white,fill=white]  (V4) circle (0.05);
	\draw[fill=black]  (V5) circle (0.05);
	\draw[fill=white]  (V6) circle (0.05);
	\draw[fill=white]  (V7) circle (0.05);
	\draw[fill=gray]  (V8) circle (0.05);
	\draw[fill=gray]  (V9) circle (0.05);
	\draw[fill=gray]  (V10) circle (0.05);
	\draw[fill=gray]  (V11) circle (0.05);
\end{tikzpicture}
\hspace{\spacebetweenpictures}
\begin{tikzpicture}[scale=\scalefactor]
	\coordinate (V1)  at (0,0);
	\coordinate (V2)  at (2,0);
	\coordinate (V3)  at (1,5/3);	
	\coordinate (V4)  at (1/2,5/6); 
	\coordinate (V5)  at (3/2,5/6); 
	\coordinate (V6)  at (1/4,5/12); 
	\coordinate (V7)  at (1,0); 
	\coordinate (V8)  at (1/2,0); 
	\coordinate (V9)  at (1/8,5/24); 
	\coordinate (V10)  at (5/4,5/12); 
	\coordinate (V11)  at (7/8,15/24); 
	\draw[] (V1) -- (V7) -- (V5) -- (V6) -- (V1);	
	\draw[] (V7) -- (V6);	
	\draw[] (V8) -- (V9);
	\draw[] (V10) -- (V11);
	\draw[fill=black] (V1) circle (0.05);
	\draw[white,fill=white]  (V2) circle (0.05);
	\draw[white,fill=white] (V3) circle (0.05);	
	\draw[white,fill=white]  (V4) circle (0.05);
	\draw[fill=black]  (V5) circle (0.05);
	\draw[fill=white]  (V6) circle (0.05);
	\draw[fill=white]  (V7) circle (0.05);
	\draw[fill=gray]  (V8) circle (0.05);
	\draw[fill=gray]  (V9) circle (0.05);
	\draw[fill=gray]  (V10) circle (0.05);
	\draw[fill=gray]  (V11) circle (0.05);
\end{tikzpicture}
\hspace{\spacebetweenpictures}
\begin{tikzpicture}[scale=\scalefactor]
	\coordinate (V1)  at (0,0);
	\coordinate (V2)  at (2,0);
	\coordinate (V3)  at (1,5/3);	
	\coordinate (V4)  at (1/2,5/6); 
	\coordinate (V5)  at (3/2,5/6); 
	\coordinate (V6)  at (1/4,5/12); 
	\coordinate (V7)  at (1,0); 
	\coordinate (V8)  at (1/2,0); 
	\coordinate (V9)  at (1/8,5/24); 
	\coordinate (V10)  at (5/4,5/12); 
	\coordinate (V11)  at (7/8,15/24); 
	\draw[] (V1) -- (V8) -- (V9) --(V1);		
	\draw[] (V5) -- (V10) -- (V11) -- (V5);
	\draw[fill=black] (V1) circle (0.05);
	\draw[white,fill=white]  (V2) circle (0.05);
	\draw[white,fill=white] (V3) circle (0.05);	
	\draw[white,fill=white]  (V4) circle (0.05);
	\draw[fill=black]  (V5) circle (0.05);
	\draw[white,fill=white]  (V6) circle (0.05);
	\draw[white,fill=white]  (V7) circle (0.05);
	\draw[fill=gray]  (V8) circle (0.05);
	\draw[fill=gray]  (V9) circle (0.05);
	\draw[fill=gray]  (V10) circle (0.05);
	\draw[fill=gray]  (V11) circle (0.05);
\end{tikzpicture}

\end{center}
\caption{Example to illustrate Algorithm \ref{core_ga} for $d=2$. Rows correspond to outer iterations. The most left column shows the partitioning (chosen arbitrarily) of the set $\bar V$ in each outer iteration: White, gray and black vertices represent the sets $\bar V_+$, $\bar V_0$ and $\bar V_-$, respectively. The second and third column show the graph $\bar G$ after the first and second inner loop, respectively. In particular, the ``shortcuts'' are inserted in the third column. The last column shows the graph $\bar G$ at the end of an outer iteration. The outer face is invalid.}\label{ex_core_ga_2}
\end{figure}

\begin{figure}[hp]
	\def\scalefactor{1.4}
	\def\spacebetweenpictures{.9cm}
	\begin{center}
\begin{tikzpicture}[scale=\scalefactor]
	\coordinate (V1)  at (0,0);
	\coordinate (V2)  at (2,0);
	\coordinate (V3)  at (1,5/3);	
	\draw (V1) -- (V2) -- (V3) -- (V1);		
	\draw[fill=black] (V1) circle (0.05) node[below] {\tiny $2,3$};
	\draw[fill=black] (V2) circle (0.05) node[below] {\tiny $1,3$};
	\draw[fill=white] (V3) circle (0.05) node[above] {\tiny $1,2$};	
\end{tikzpicture}
\hspace{\spacebetweenpictures}
\begin{tikzpicture}[scale=\scalefactor]
	\coordinate (V1)  at (0,0);
	\coordinate (V2)  at (2,0);
	\coordinate (V3)  at (1,5/3);	
	\coordinate (V4)  at (1/2,5/6); 
	\coordinate (V5)  at (3/2,5/6); 
	\draw[] (V1) -- (V2) -- (V3) -- (V1);	
	\draw[fill=black] (V1) circle (0.05) node[below] {\tiny $2,3$};
	\draw[fill=black] (V2) circle (0.05) node[below] {\tiny $1,3$};
	\draw[fill=white] (V3) circle (0.05) node[above] {\tiny $1,2$};		
	\draw[fill=gray]  (V4) circle (0.05) node[left] {\tiny $2,4$};
	\draw[fill=gray]  (V5) circle (0.05) node[right] {\tiny $1,4$};
\end{tikzpicture}
\hspace{\spacebetweenpictures}
\begin{tikzpicture}[scale=\scalefactor]
	\coordinate (V1)  at (0,0);
	\coordinate (V2)  at (2,0);
	\coordinate (V3)  at (1,5/3);	
	\coordinate (V4)  at (1/2,5/6); 
	\coordinate (V5)  at (3/2,5/6); 
	\draw[] (V1) -- (V2) -- (V5) -- (V4) -- (V1);	
	\draw[fill=black] (V1) circle (0.05) node[below] {\tiny $2,3$};
	\draw[fill=black] (V2) circle (0.05) node[below] {\tiny $1,3$};
	\draw[white,fill=white] (V3) circle (0.05) node[above] {\tiny $1,2$};		
	\draw[fill=gray]  (V4) circle (0.05) node[left] {\tiny $2,4$};
	\draw[fill=gray]  (V5) circle (0.05) node[right] {\tiny $1,4$};
\end{tikzpicture}

\vspace{-1cm}
\begin{tikzpicture}[scale=\scalefactor]
	\coordinate (V1)  at (0,0);
	\coordinate (V2)  at (2,0);
	\coordinate (V3)  at (1,5/3);	
	\coordinate (V4)  at (1/2,5/6); 
	\coordinate (V5)  at (3/2,5/6); 
	\draw[] (V1) -- (V2) -- (V5) -- (V4) -- (V1);
	\draw[fill=black] (V1) circle (0.05) node[below] {\tiny $2,3$};
	\draw[fill=white] (V2) circle (0.05) node[below] {\tiny $1,3$};
	\draw[white,fill=white] (V3) circle (0.05) node[above] {\tiny $1,2$};		
	\draw[fill=white]  (V4) circle (0.05) node[left] {\tiny $2,4$};
	\draw[fill=gray]  (V5) circle (0.05) node[right] {\tiny $1,4$};		
	\draw[fill=black] (V1) circle (0.05);
	\draw[fill=white]  (V2) circle (0.05);
	\draw[white,fill=white] (V3) circle (0.05);	
	\draw[fill=white]  (V4) circle (0.05);
	\draw[fill=gray]  (V5) circle (0.05);
\end{tikzpicture}
\hspace{\spacebetweenpictures}
\begin{tikzpicture}[scale=\scalefactor]
	\coordinate (V1)  at (0,0);
	\coordinate (V2)  at (2,0);
	\coordinate (V3)  at (1,5/3);	
	\coordinate (V4)  at (1/2,5/6); 
	\coordinate (V5)  at (3/2,5/6); 
	\coordinate (V6)  at (1/4,5/12); 
	\coordinate (V7)  at (1,0); 
	\draw[] (V1) -- (V2) -- (V5) -- (V4) -- (V1);	
	\draw[fill=black] (V1) circle (0.05) node[below] {\tiny $2,3$};
	\draw[fill=white] (V2) circle (0.05) node[below] {\tiny $1,3$};
	\draw[white,fill=white] (V3) circle (0.05) node[above] {\tiny $1,2$};		
	\draw[fill=white]  (V4) circle (0.05) node[left] {\tiny $2,4$};
	\draw[fill=gray]  (V5) circle (0.05) node[right] {\tiny $1,4,5$};
	\draw[fill=gray]  (V6) circle (0.05) node[left] {\tiny $2,5$};
	\draw[fill=gray]  (V7) circle (0.05) node[below] {\tiny $3,5$};
\end{tikzpicture}
\hspace{\spacebetweenpictures}
\begin{tikzpicture}[scale=\scalefactor]
	\coordinate (V1)  at (0,0);
	\coordinate (V2)  at (2,0);
	\coordinate (V3)  at (1,5/3);	
	\coordinate (V4)  at (1/2,5/6); 
	\coordinate (V5)  at (3/2,5/6); 
	\coordinate (V6)  at (1/4,5/12); 
	\coordinate (V7)  at (1,0); 
	\draw[] (V1) -- (V7) -- (V5) -- (V6) -- (V1);	
	\draw[] (V7) -- (V6);	
	\draw[fill=black] (V1) circle (0.05) node[below] {\tiny $2,3$};
	\draw[white,fill=white] (V2) circle (0.05) node[below] {\tiny $1,3$};
	\draw[white,fill=white] (V3) circle (0.05) node[above] {\tiny $1,2$};		
	\draw[white,fill=white]  (V4) circle (0.05) node[left] {\tiny $2,4$};
	\draw[fill=gray]  (V5) circle (0.05) node[right] {\tiny $1,4,5$};
	\draw[fill=gray]  (V6) circle (0.05) node[left] {\tiny $2,5$};
	\draw[fill=gray]  (V7) circle (0.05) node[below] {\tiny $3,5$};
\end{tikzpicture}

\vspace{-1cm}
\begin{tikzpicture}[scale=\scalefactor]
	\coordinate (V1)  at (0,0);
	\coordinate (V2)  at (2,0);
	\coordinate (V3)  at (1,5/3);	
	\coordinate (V4)  at (1/2,5/6); 
	\coordinate (V5)  at (3/2,5/6); 
	\coordinate (V6)  at (1/4,5/12); 
	\coordinate (V7)  at (1,0); 
	\draw[] (V1) -- (V7) -- (V5) -- (V6) -- (V1);	
	\draw[] (V7) -- (V6);
	\draw[fill=black] (V1) circle (0.05) node[below left] {\tiny $2,3$};
	\draw[white,fill=white] (V2) circle (0.05) node[below] {\tiny $1,3$};
	\draw[white,fill=white] (V3) circle (0.05) node[above] {\tiny $1,2$};		
	\draw[white,fill=white]  (V4) circle (0.05) node[left] {\tiny $2,4$};
	\draw[fill=black]  (V5) circle (0.05) node[right] {\tiny $1,4,5$};
	\draw[fill=white]  (V6) circle (0.05) node[above left] {\tiny $2,5$};
	\draw[fill=white]  (V7) circle (0.05) node[below right] {\tiny $3,5$};
\end{tikzpicture}
\hspace{\spacebetweenpictures}
\begin{tikzpicture}[scale=\scalefactor]
	\coordinate (V1)  at (0,0);
	\coordinate (V2)  at (2,0);
	\coordinate (V3)  at (1,5/3);	
	\coordinate (V4)  at (1/2,5/6); 
	\coordinate (V5)  at (3/2,5/6); 
	\coordinate (V6)  at (1/4,5/12); 
	\coordinate (V7)  at (1,0); 
	\coordinate (V8)  at (1/2,0); 
	\coordinate (V9)  at (1/8,5/24); 
	\coordinate (V10)  at (5/4,5/12); 
	\coordinate (V11)  at (7/8,15/24); 
	\draw[] (V1) -- (V7) -- (V5) -- (V6) -- (V1);	
	\draw[] (V7) -- (V6);	
	\draw[fill=black] (V1) circle (0.05) node[below left] {\tiny $2,3$};
	\draw[white,fill=white] (V2) circle (0.05) node[below] {\tiny $1,3$};
	\draw[white,fill=white] (V3) circle (0.05) node[above] {\tiny $1,2$};		
	\draw[white,fill=white]  (V4) circle (0.05) node[left] {\tiny $2,4$};
	\draw[fill=black]  (V5) circle (0.05) node[right] {\tiny $1,4,5$};
	\draw[fill=white]  (V6) circle (0.05) node[above left] {\tiny $2,5$};
	\draw[fill=white]  (V7) circle (0.05) node[below right] {\tiny $3,5$};
	\draw[fill=gray]  (V8) circle (0.05) node[below] {\tiny $3,6$};
	\draw[fill=gray]  (V9) circle (0.05) node[left] {\tiny $2,6$};
	\draw[fill=gray]  (V10) circle (0.05) node[right] {\tiny $5,6$};
	\draw[fill=gray]  (V11) circle (0.05) node[above] {\tiny $5,6$};
\end{tikzpicture}
\hspace{\spacebetweenpictures}
\begin{tikzpicture}[scale=\scalefactor]
	\coordinate (V1)  at (0,0);
	\coordinate (V2)  at (2,0);
	\coordinate (V3)  at (1,5/3);	
	\coordinate (V4)  at (1/2,5/6); 
	\coordinate (V5)  at (3/2,5/6); 
	\coordinate (V6)  at (1/4,5/12); 
	\coordinate (V7)  at (1,0); 
	\coordinate (V8)  at (1/2,0); 
	\coordinate (V9)  at (1/8,5/24); 
	\coordinate (V10)  at (5/4,5/12); 
	\coordinate (V11)  at (7/8,15/24); 
	\draw[] (V1) -- (V8) -- (V9) --(V1);		
	\draw[] (V5) -- (V10) -- (V11) -- (V5);
	\draw[] (V8) -- (V10) -- (V9) -- (V11) --(V8);
	\draw[fill=black] (V1) circle (0.05) node[below left] {\tiny $2,3$};
	\draw[white,fill=white] (V2) circle (0.05) node[below] {\tiny $1,3$};
	\draw[white,fill=white] (V3) circle (0.05) node[above] {\tiny $1,2$};		
	\draw[white,fill=white]  (V4) circle (0.05) node[left] {\tiny $2,4$};
	\draw[fill=black]  (V5) circle (0.05) node[right] {\tiny $1,4,5$};
	\draw[white,fill=white]  (V6) circle (0.05) node[above left] {\tiny $2,5$};
	\draw[white,fill=white]  (V7) circle (0.05) node[below right] {\tiny $3,5$};
	\draw[fill=gray]  (V8) circle (0.05) node[below] {\tiny $3,6$};
	\draw[fill=gray]  (V9) circle (0.05) node[left] {\tiny $2,6$};
	\draw[fill=gray]  (V10) circle (0.05) node[right] {\tiny $5,6$};
	\draw[fill=gray]  (V11) circle (0.05) node[above] {\tiny $5,6$};
\end{tikzpicture}
\end{center}
\caption{Example to illustrate Algorithm \ref{core_addm} for $d=2$. We use the same symbols, the same example and the same partitioning rule as in Figure \ref{ex_core_ga_2}. Again, rows correspond to outer iterations and the left column shows the partitioning. The second column shows the graph $\hat G$ after the second inner loop and the last column shows the graph at the end of an outer iteration. The incidence list $I(v)$ is displayed at each vertex $v$. We see in Figure \ref{ex_core_ga_2} that Algorithm \ref{core_ga} has computed a (proper) subgraph of $\hat G$, compare Theorem \ref{th_rel_core}. }\label{ex_core_addm}
\end{figure}
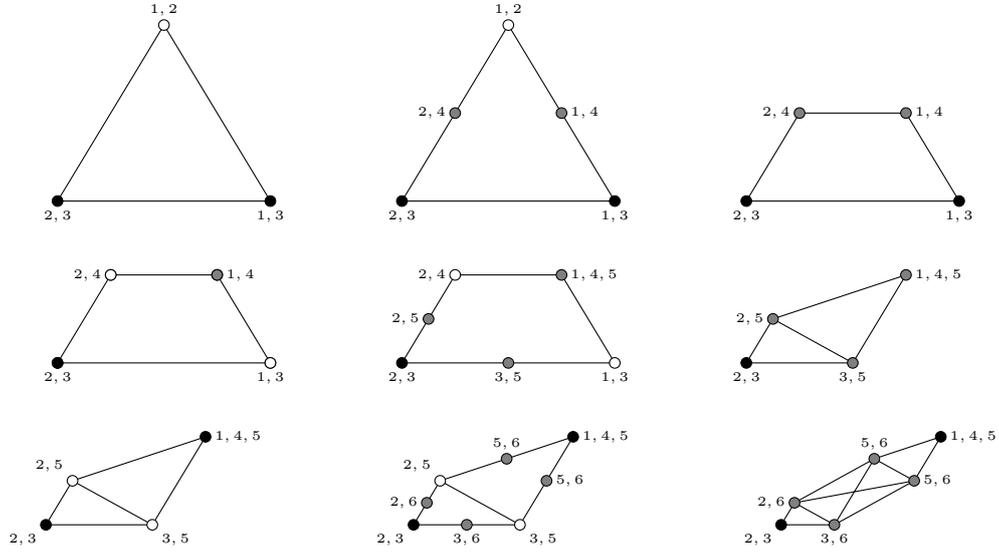

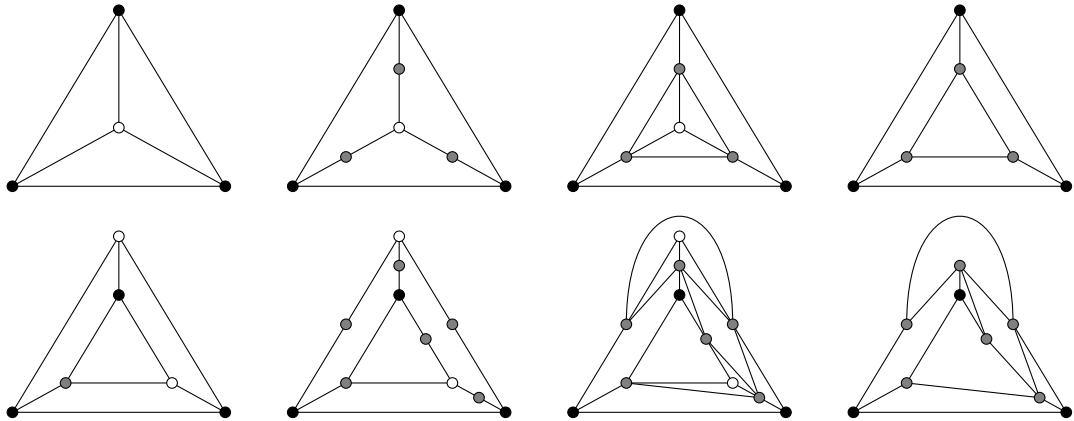
\begin{figure}[hpt]
\def\scalefactor{1.4}
\def\spacebetweenpictures{.5cm}
	\begin{center}
\begin{tikzpicture}[scale=\scalefactor]
	\coordinate (V1)  at (0,0);
	\coordinate (V2)  at (2,0);
	\coordinate (V3)  at (1,5/3);	
	\coordinate (V4)  at (1,5/9);	
	\draw (V1) -- (V2) -- (V3) -- (V1);	
	\draw (V1) -- (V4);
	\draw (V2) -- (V4);	
	\draw (V3) -- (V4);
	\draw[fill=black] (V1) circle (0.05);
	\draw[fill=black] (V2) circle (0.05);
	\draw[fill=black] (V3) circle (0.05);	
	\draw[fill=white] (V4) circle (0.05);
\end{tikzpicture}
\hspace{\spacebetweenpictures}
\begin{tikzpicture}[scale=\scalefactor]
	\coordinate (V1)  at (0,0);
	\coordinate (V2)  at (2,0);
	\coordinate (V3)  at (1,5/3);	
	\coordinate (V4)  at (1,5/9);
	\coordinate (V5)  at (1/2,5/18); 
	\coordinate (V6)  at (3/2,5/18); 
	\coordinate (V7)  at (1,10/9); 
	\draw (V1) -- (V2) -- (V3) -- (V1);	
	\draw (V1) -- (V4);
	\draw (V2) -- (V4);	
	\draw (V3) -- (V4);
	\draw[fill=black] (V1) circle (0.05);
	\draw[fill=black] (V2) circle (0.05);
	\draw[fill=black] (V3) circle (0.05);	
	\draw[fill=white] (V4) circle (0.05);
	\draw[fill=gray] (V5) circle (0.05);
	\draw[fill=gray] (V6) circle (0.05);	
	\draw[fill=gray] (V7) circle (0.05);
\end{tikzpicture}
\hspace{\spacebetweenpictures}
\begin{tikzpicture}[scale=\scalefactor]
	\coordinate (V1)  at (0,0);
	\coordinate (V2)  at (2,0);
	\coordinate (V3)  at (1,5/3);	
	\coordinate (V4)  at (1,5/9);
	\coordinate (V5)  at (1/2,5/18); 
	\coordinate (V6)  at (3/2,5/18); 
	\coordinate (V7)  at (1,10/9); 
	\draw (V1) -- (V2) -- (V3) -- (V1);	
	\draw (V5) -- (V6) -- (V7) -- (V5);
	\draw (V1) -- (V4);
	\draw (V2) -- (V4);	
	\draw (V3) -- (V4);
	\draw[fill=black] (V1) circle (0.05);
	\draw[fill=black] (V2) circle (0.05);
	\draw[fill=black] (V3) circle (0.05);	
	\draw[fill=white] (V4) circle (0.05);
	\draw[fill=gray] (V5) circle (0.05);
	\draw[fill=gray] (V6) circle (0.05);	
	\draw[fill=gray] (V7) circle (0.05);
\end{tikzpicture}
\hspace{\spacebetweenpictures}
\begin{tikzpicture}[scale=\scalefactor]
	\coordinate (V1)  at (0,0);
	\coordinate (V2)  at (2,0);
	\coordinate (V3)  at (1,5/3);	
	\coordinate (V4)  at (1,5/9);
	\coordinate (V5)  at (1/2,5/18); 
	\coordinate (V6)  at (3/2,5/18); 
	\coordinate (V7)  at (1,10/9); 
	\draw (V1) -- (V2) -- (V3) -- (V1);	
	\draw (V5) -- (V6) -- (V7) -- (V5);
	\draw (V1) -- (V5);
	\draw (V2) -- (V6);	
	\draw (V3) -- (V7);
	\draw[fill=black] (V1) circle (0.05);
	\draw[fill=black] (V2) circle (0.05);
	\draw[fill=black] (V3) circle (0.05);	
	\draw[white,fill=white] (V4) circle (0.05);
	\draw[fill=gray] (V5) circle (0.05);
	\draw[fill=gray] (V6) circle (0.05);	
	\draw[fill=gray] (V7) circle (0.05);
\end{tikzpicture}

\vspace{-.2cm}
\begin{tikzpicture}[scale=\scalefactor]
	\coordinate (V1)  at (0,0);
	\coordinate (V2)  at (2,0);
	\coordinate (V3)  at (1,5/3);	
	\coordinate (V4)  at (1,5/9);
	\coordinate (V5)  at (1/2,5/18); 
	\coordinate (V6)  at (3/2,5/18); 
	\coordinate (V7)  at (1,10/9); 
	\draw (V1) -- (V2) -- (V3) -- (V1);	
	\draw (V5) -- (V6) -- (V7) -- (V5);
	\draw (V1) -- (V5);
	\draw (V2) -- (V6);	
	\draw (V3) -- (V7);
	\draw[fill=black] (V1) circle (0.05);
	\draw[fill=black] (V2) circle (0.05);
	\draw[fill=white] (V3) circle (0.05);	
	\draw[white,fill=white] (V4) circle (0.05);
	\draw[fill=gray] (V5) circle (0.05);
	\draw[fill=white] (V6) circle (0.05);	
	\draw[fill=black] (V7) circle (0.05);
\end{tikzpicture}
\hspace{\spacebetweenpictures}
\begin{tikzpicture}[scale=\scalefactor]
	\coordinate (V1)  at (0,0);
	\coordinate (V2)  at (2,0);
	\coordinate (V3)  at (1,5/3);	
	\coordinate (V4)  at (1,5/9);
	\coordinate (V5)  at (1/2,5/18); 
	\coordinate (V6)  at (3/2,5/18); 
	\coordinate (V7)  at (1,10/9); 
	\coordinate (V8)  at (3/2,5/6); 
	\coordinate (V9)  at (1/2,5/6); 
	\coordinate (V10)  at (1,25/18); 
	\coordinate (V11)  at (7/4,5/36); 
	\coordinate (V12)  at (5/4,25/36); 
	\draw (V1) -- (V2) -- (V3) -- (V1);	
	\draw (V5) -- (V6) -- (V7) -- (V5);
	\draw (V1) -- (V5);
	\draw (V2) -- (V6);	
	\draw (V3) -- (V7);
	\draw[fill=black] (V1) circle (0.05);
	\draw[fill=black] (V2) circle (0.05);
	\draw[fill=white] (V3) circle (0.05);	
	\draw[white,fill=white] (V4) circle (0.05);
	\draw[fill=gray] (V5) circle (0.05);
	\draw[fill=white] (V6) circle (0.05);	
	\draw[fill=black] (V7) circle (0.05);	
	\draw[fill=gray] (V8) circle (0.05);
	\draw[fill=gray] (V9) circle (0.05);
	\draw[fill=gray] (V10) circle (0.05);
	\draw[fill=gray] (V11) circle (0.05);
	\draw[fill=gray] (V12) circle (0.05);
\end{tikzpicture}
\hspace{\spacebetweenpictures}
\begin{tikzpicture}[scale=\scalefactor]
	\coordinate (V1)  at (0,0);
	\coordinate (V2)  at (2,0);
	\coordinate (V3)  at (1,5/3);	
	\coordinate (V4)  at (1,5/9);
	\coordinate (V5)  at (1/2,5/18); 
	\coordinate (V6)  at (3/2,5/18); 
	\coordinate (V7)  at (1,10/9); 
	\coordinate (V8)  at (3/2,5/6); 
	\coordinate (V9)  at (1/2,5/6); 
	\coordinate (V10)  at (1,25/18); 
	\coordinate (V11)  at (7/4,5/36); 
	\coordinate (V12)  at (5/4,25/36); 
	\draw (V1) -- (V2);
	\draw (V1) -- (V5);
	\draw (V1) -- (V9);
		
	\draw (V2) -- (V8);		
	\draw (V2) -- (V11);	
	
	\draw (V3) -- (V8);
	\draw (V3) -- (V9);
	\draw (V3) -- (V10);
	
	\draw (V5) -- (V7);
	\draw (V5) -- (V6);
	\draw (V5) -- (V9);
	\draw (V5) -- (V10);
	\draw (V5) -- (V11);
	\draw (V5) -- (V12);
	
	\draw (V6) -- (V11);
	\draw (V6) -- (V12);	
		
	\draw (V7) -- (V10);
	\draw (V7) -- (V12);
	
	\draw (V8) to [out=90,in=90,looseness=3.5] (V9);
	\draw (V8) -- (V10);
	\draw (V8) -- (V11);

	\draw (V9) -- (V10);
	
	\draw (V10) -- (V12);

	\draw (V11) -- (V12);
	
	\draw[fill=black] (V1) circle (0.05);
	\draw[fill=black] (V2) circle (0.05);
	\draw[fill=white] (V3) circle (0.05);	
	\draw[fill=gray] (V5) circle (0.05);
	\draw[fill=white] (V6) circle (0.05);	
	\draw[fill=black] (V7) circle (0.05);	
	\draw[fill=gray] (V8) circle (0.05);
	\draw[fill=gray] (V9) circle (0.05);
	\draw[fill=gray] (V10) circle (0.05);
	\draw[fill=gray] (V11) circle (0.05);
	\draw[fill=gray] (V12) circle (0.05);	
\end{tikzpicture}
\hspace{\spacebetweenpictures}
\begin{tikzpicture}[scale=\scalefactor]
	\coordinate (V1)  at (0,0);
	\coordinate (V2)  at (2,0);
	\coordinate (V3)  at (1,5/3);	
	\coordinate (V4)  at (1,5/9);
	\coordinate (V5)  at (1/2,5/18); 
	\coordinate (V6)  at (3/2,5/18); 
	\coordinate (V7)  at (1,10/9); 
	\coordinate (V8)  at (3/2,5/6); 
	\coordinate (V9)  at (1/2,5/6); 
	\coordinate (V10)  at (1,25/18); 
	\coordinate (V11)  at (7/4,5/36); 
	\coordinate (V12)  at (5/4,25/36); 
	\draw (V1) -- (V2);
	\draw (V1) -- (V5);
	\draw (V1) -- (V9);
		
	\draw (V2) -- (V8);		
	\draw (V2) -- (V11);	
	
	\draw (V5) -- (V7);
	\draw (V5) -- (V9);
	\draw (V5) -- (V10);
	\draw (V5) -- (V11);
	\draw (V5) -- (V12);
		
	\draw (V7) -- (V10);
	\draw (V7) -- (V12);
	
	\draw (V8) to [out=90,in=90,looseness=3.5] (V9);
	\draw (V8) -- (V10);
	\draw (V8) -- (V11);

	\draw (V9) -- (V10);
	
	\draw (V10) -- (V12);

	\draw (V11) -- (V12);
	
	\draw[fill=black] (V1) circle (0.05);
	\draw[fill=black] (V2) circle (0.05);
	\draw[white,fill=white] (V3) circle (0.05);	
	\draw[fill=gray] (V5) circle (0.05);
	\draw[white,fill=white] (V6) circle (0.05);	
	\draw[fill=black] (V7) circle (0.05);	
	\draw[fill=gray] (V8) circle (0.05);
	\draw[fill=gray] (V9) circle (0.05);
	\draw[fill=gray] (V10) circle (0.05);
	\draw[fill=gray] (V11) circle (0.05);
	\draw[fill=gray] (V12) circle (0.05);	
\end{tikzpicture}
\end{center}
\caption{Example to illustrate Algorithm \ref{core_ga} for $d=3$. Rows correspond to outer iterations. The most left column shows the partitioning (chosen arbitrarily) of the set $\bar V$: White, gray and black vertices represent the sets $V_+$, $V_0$ and $V_-$, respectively. The second and the third column show the results of the first and second inner loop, respectively. In particular, the ``shortcuts'' are inserted in the third column. The last column shows the graph at the end of an outer iteration. In contrast to Figure \ref{ex_core_ga_2} all faces are valid.}\label{ex_core_ga_3}
\end{figure}

\begin{algorithm}[ph]
\DontPrintSemicolon
\SetKwInOut{Input}{input}\SetKwInOut{Output}{output}
\Input{$A \in \R^{m\times d}$, $d \in \{2,3\}$ such that $P \colonequals  \{x \mid A x \leq \one\}$ is a polytope and $S \colonequals  \{x \mid A_{[d+1]} x \leq \one\}$ is a simplex; tolerance $\varepsilon \geq 0$}
\Output{some plane graph $G=(V,E)$ and a coordinate function $c: V \to \R^d$ such that $P \subseteq \conv c(V) \subseteq (1+\varepsilon) P$}
\Begin{
	initialize the (plane) graph $G = G(V,E)$ by $K_{d+1}$ (taking into account (I1) and (I2)) with nodes $V=\{v_1,\dots,v_{d+1}\}$\;
	compute vertices $\{u_1,\dots,u_{d+1}\}$ of $(1+\frac{\varepsilon}{2})S$ and set $c(v_i) \leftarrow u_i$ for $i \in \{1,\dots,d+1\}$\;

    \For{$i\leftarrow d+2$ {\textbf{to}} $m$}{
		partition $V$ into disjoint sets $V_- \neq \emptyset$, $V_0$, $V_+$ such that
		$V_- \subseteq \{v \in V \mid A_i c(v) < 1 + \frac{\varepsilon}{2} \}$,
		$V_+ \subseteq \{v \in V \mid A_i c(v) > 1 + \frac{\varepsilon}{2} \}$,
		$V_0 \subseteq \{v \in V \mid 1 \leq A_i c(v) \leq 1 + \varepsilon \}$\;
		\For{$e = uw \in E$ satisfying $u \in V_-$ and $w \in V_+$}{
			add a new node $v=v(u,w)$ to $V_0$ and $V$\;			
			in $E$ replace $uw$ by two new edges $uv$ and $vw$\;
			define $c(v)$ by choosing a point which is on the line segment between $c(u)$ and $c(w)$ such that $1 \leq A_i c(v) \leq 1 + \varepsilon$\;			
		}
		\For{$u_0 u_+, w_0 w_+ \in E$ with $u_0,w_0 \in V_0$, $u_0 \neq w_0$, $u_0 w_0 \not\in E$, $u_+,w_+ \in V_+$}{
			\If{there is a walk along a bounding walk of some valid face $f$ from $u_0$ to $w_0$ with all intermediate nodes belonging to $V_+$ or all intermediate nodes belonging to $V_-$}
			{
				$E \leftarrow E \cup \{u_0w_0\}$\;
			} 
		}
		remove all edges incident with some $v \in V_+$ from $E$\;
		remove $V_+$ from $V$\;
		remove multiples of edges from $E$		  
    }
}
\caption{Shortcut algorithm for approximate vertex enumeration}
\label{ga}
\end{algorithm}

Algorithm \ref{ga} is called the {\em shortcut algorithm}. It arises from Algorithm \ref{core_ga} by specifying a partitioning rule. The partitioning rule is defined by the approximate vertex enumeration problem instance. A coordinate function $c: V \to \R^d$ is iteratively defined and used for the partitioning rule. Note that the approximate double description method in Algorithm \ref{addm} arises from Algorithm \ref{core_addm} by the same partitioning rule. Therefore, Algorithm \ref{ga} computes a subgraph of the graph computed by Algorithm \ref{addm}, as stated by Theorem \ref{th_rel_core}. Moreover, at termination of Algorithms \ref{ga} and \ref{addm}, vertices $v \in  \bar V \subseteq \hat V$ have the same coordinates in both algorithms. This proves the following corollary.

\begin{corollary}\label{cor_rel}
	If Algorithm \ref{ga} is correct then Algorithm \ref{addm} is correct.
\end{corollary}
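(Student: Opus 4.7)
The plan is to couple Algorithms~\ref{ga} and~\ref{addm} by running them simultaneously on the same input $(A,\varepsilon)$ with identical choices in each step where the algorithms permit freedom (the tie-breaking in the partition $V_-,V_0,V_+$ and the selection of the new coordinate on each cut segment). Under this coupling the coordinate function $c$ is defined identically on the shared vertices, and the partitioning rules of both algorithms coincide on the shared vertices because they depend only on $c$. Hence the hypothesis on the partition in Theorem~\ref{th_rel_core} (applied to the abstract cores) is satisfied, yielding $\bar V\subseteq \hat V$ at termination, where $\bar V$ and $\hat V$ denote the final vertex sets of Algorithms~\ref{ga} and~\ref{addm}, respectively.

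Assume now that Algorithm~\ref{ga} is correct, so that $P\subseteq \conv c(\bar V)\subseteq (1+\varepsilon)P$. From $c(\bar V)\subseteq c(\hat V)$ the lower inclusion for Algorithm~\ref{addm} is immediate:
\[
P\subseteq \conv c(\bar V)\subseteq \conv c(\hat V).
\]

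For the upper inclusion I would argue, independently of Algorithm~\ref{ga}, that every coordinate $c(v)$ produced by Algorithm~\ref{addm} already lies in $(1+\varepsilon)P$, so that $\conv c(\hat V)\subseteq (1+\varepsilon)P$ by convexity of $(1+\varepsilon)P$. This is a short induction on the outer iteration index: the initial vertices satisfy $A_{[d+1]} c(v)\leq (1+\tfrac{\varepsilon}{2})\one$; any vertex surviving iteration $i$ either lay in $V_-\cup V_0$ and thus satisfies $A_i c(v)\leq 1+\varepsilon$, or was freshly inserted on the segment between two vertices that already met the invariant $A_{[i-1]}\cdot\,\leq (1+\varepsilon)\one$ and was chosen in the slab $1\leq A_i c(v)\leq 1+\varepsilon$; convexity of each half-space then preserves $A_{[i]} c(v)\leq (1+\varepsilon)\one$ through the loop.

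The only subtle point, and the step most worth double-checking, is the coupling itself: verifying that with identical auxiliary choices the partition produced by Algorithm~\ref{ga} applied to $\hat V$ indeed restricts to the partition of $\bar V$ in the sense required by Theorem~\ref{th_rel_core} ($\bar V_-\subseteq \hat V_-$, $\bar V_0\subseteq \hat V_0$, $\bar V_+\subseteq \hat V_+$). Since the rule uses only $c(v)$ and $c$ agrees on $\bar V\subseteq \hat V$ by the inductive construction, this is immediate, and the rest of the argument is simply Theorem~\ref{th_rel_core} combined with the convexity observation above.
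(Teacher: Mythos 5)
Your proposal is correct and follows essentially the same route as the paper: couple the two algorithms via a common partitioning rule (a deterministic function of the shared coordinate function $c$), invoke Theorem~\ref{th_rel_core} to obtain $\bar V\subseteq \hat V$ with $c$ agreeing on the intersection, and deduce the lower inclusion $P\subseteq\conv c(\hat V)$ from the assumed correctness of Algorithm~\ref{ga}. The one place where you go beyond the paper's one-paragraph justification is the upper inclusion: you observe, correctly, that $\conv c(\bar V)\subseteq\conv c(\hat V)$ points the wrong way for inheriting the upper bound, and you supply the missing direct induction showing that every $c(v)$ produced by Algorithm~\ref{addm} satisfies $A_{[i]}c(v)\leq(1+\varepsilon)\one$ after iteration $i$, hence lies in $(1+\varepsilon)P$ at termination. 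The paper leaves this step implicit (it is the same remark made in the proofs of Theorems~\ref{th_ga_2} and~\ref{th_ga_3} for the GA, transplanted to the ADDM because $V_+$ is deleted each iteration), so your explicit treatment is a genuine improvement in rigor, though not a different method.
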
 

It remains to prove correctness of Algorithm \ref{ga}.
Let $G=(V,E)$ be a plane graph and let $c:V \to \R^d$, $d \in \{2,3\}$ be a {\em coordinate function}.
By defining $c(V)\colonequals \{c(v)\mid v \in V\}$ we obtain a new vertex set in $\R^d$. For every $e \in E$, we define by $c(e) \colonequals \conv\{c(u),c(v)\}$ an associated line segment in $\R^d$.
The set $c(E)\colonequals \{c(e)\mid e \in E\}$ of such line segments defines a new set of edges in $\R^d$. 
We obtain an embedding of $G$ into $\R^d$ which is denoted by $c(G) = (c(V),c(E))$. 
Note that, even for $d=2$, $c(G)$ is not necessarily a plane graph (in the sense of an embedding into the plane). 
Note further that distinct vertices $u,v$ in $V$ can have the same coordinates $c(u),c(v)$ and thus line segments $c(e)$ can reduce to points. 

Let us first consider the case $d=2$. 
Let $r \in \R^2\setminus \{0\}$ be a direction.
Subsequently, we assume that for all $v \in V$, $c(v) \not \in \R \cdot r$ holds.
Let $f$ be a face of $G$ and let $W(f)$ be an associated bounding walk. 
Let $N(f,r)$ be the number of line segments $c(e)$ for edges $e$ in $W(f)$ which have common points with the ray $R \colonequals \R_+\cdot r$. 
For short we say that $N(f,r)$ is the {\em number of boundary crossings} of the polygon $c(W(f))$ with the ray $R$. If an edge $e$ is twice contained in $W$, it is counted twice in case of crossing. 

A plane graph $G=(V,E)$ together with a coordinate function $c : V \to \R^2$, $(G,c)$ for short, is called {\em regular} if  
\begin{enumerate}[(A{$_2$})]
	\item \label{ass_A2} For every $e \in E$ there exists an index $\kappa(e) \in \{1,\dots,m\}$ such that the line segment $c(e)$ belongs to the half-plane 
		\begin{equation*}
	  		B_+(e) \colonequals \{x \mid A_{\kappa(e)} x \geq 1\}.
	  \end{equation*}
	\item \label{ass_B2} For a dense subset of directions $r$ in $\R^2\setminus\{0\}$, the number $N(r) \colonequals \sum_{f \in F} N(f,r)$, where $F$ denotes the set of all valid faces of $G$, is odd.
\end{enumerate}

\begin{theorem} \label{th_ga_2}
	Algorithm \ref{ga} is correct for $d=2$.
\end{theorem}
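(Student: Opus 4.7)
The plan is to maintain two invariants across the outer iterations of Algorithm~\ref{ga}: (I) every current vertex $v$ satisfies $c(v) \in (1+\varepsilon)P^{(i)}$, where $P^{(i)}$ is the polytope cut out by the inequalities already processed; and (II) the pair $(G, c)$ is regular, i.e.\ satisfies (A$_2$) and (B$_2$). At termination these invariants will yield $\conv c(V) \subseteq (1+\varepsilon) P$ (from~(I)) and $P \subseteq \conv c(V)$ (from~(II), via a mod-$2$ winding-number argument explained below). The base case $K_3$ with $c(v_i)$ at the vertices of $(1+\tfrac{\varepsilon}{2})S$ is immediate: (I) is trivial, (A$_2$) holds with $\kappa(v_iv_j)$ chosen as the remaining index in $\{1,2,3\}\setminus\{i,j\}$ (the edge lies on the hyperplane $A_\kappa x = 1+\tfrac{\varepsilon}{2}$), and (B$_2$) holds because the single valid face is the bounded triangle containing $0$, so every generic ray from $0$ crosses its boundary exactly once, giving $N(r)=1$.

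For the inductive step I analyse the three substages of iteration~$i$. Splitting an edge $uw$ at a new vertex $v \in V_0$ preserves (I) because $c(v)$ is a convex combination of $c(u), c(w) \in (1+\varepsilon)P^{(i-1)}$ chosen with $1 \leq A_i c(v) \leq 1+\varepsilon$; it preserves (A$_2$) with $\kappa(uv) = \kappa(vw) = \kappa(uw)$; and it leaves (B$_2$) untouched since $c(uv) \cup c(vw) = c(uw)$ as subsets of $\R^2$. Adding a new chord $u_0w_0$ with both endpoints in $V_0$ preserves (A$_2$) with $\kappa = i$, because $A_i c(u_0), A_i c(w_0) \geq 1$ implies $A_i x \geq 1$ on the whole segment by convexity; it splits a valid face $f$ into two subfaces, both valid by rule~(U2), and each crossing of $c(u_0w_0)$ by a generic ray is counted twice in the new $N(r)$, an even contribution. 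Deleting the set $V_+$ and its incident edges is by far the subtlest step and constitutes the main obstacle of the proof.

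The main obstacle is to show that the cutoff step preserves (B$_2$). Removing an edge incident to a valid and to an invalid face merges the two into an invalid face by rule~(U1), which drops out of the sum $N(r)$; in general a cascade of such merges can occur as $V_+$-incident edges are pulled. The walk condition in the second inner loop---that $u_0$ and $w_0$ be connectable along a bounding walk of some valid face by intermediate vertices lying in $V_+$ (or at least not in $V_-$)---is engineered so that, before deletion, every place where the cutoff would tear the valid region apart has already been sealed by a new chord $u_0w_0$. I plan to formalize this via a local-accounting lemma: along any generic ray, the crossings lost by deleting $V_+$-incident edges equal modulo~$2$ the crossings contributed by the newly added chords. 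This reduces to a careful bookkeeping of how bounding walks of valid faces are transformed through the three substages, relying on planarity of $G$ and on the precise four-part definition of a bounding walk given earlier in the paper; the combinatorial condition on the walk in the second inner loop is what ensures the bookkeeping closes up.

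Granted~(II), the inclusion $P \subseteq \conv c(V)$ follows as follows. By (A$_2$) every segment $c(e)$ lies in the closed half-plane $\{x \mid A_{\kappa(e)} x \geq 1\}$, which is disjoint from $\inter P$; hence $\inter P$ lies in a single connected component of $\R^2 \setminus c(E)$. For a generic ray direction in the dense set provided by (B$_2$), the mod-$2$ intersection number with the union of bounding walks of valid faces is locally constant off that union, so the parity computed from any $x \in \inter P$ agrees with the parity computed from $0$, which by (B$_2$) is odd. Odd parity forces $x$ to be enclosed by some valid face boundary, and every such boundary consists of line segments between points of $c(V)$, hence $x \in \conv c(V)$. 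Taking closures yields $P \subseteq \conv c(V)$, completing the correctness proof.
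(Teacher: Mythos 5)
Your overall framework is the one the paper uses: an induction on the outer iterations maintaining the regularity conditions (\ref{ass_A2}$_2$) and (\ref{ass_B2}$_2$), followed by a mod-$2$ crossing-number argument to deduce $P \subseteq \conv c(V)$. Your base case and the treatment of the edge-splitting and chord-adding substeps are all correct and match the paper's. Your concluding step differs slightly in flavor (a winding-number / enclosure argument rather than the paper's direct identification of a crossing point $\mu r \in c(e)$ with $\mu \geq 1$ via (\ref{ass_A2}$_2$) and $c(e) \cap \inter P = \emptyset$), but both routes are sound.

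However, there is a genuine gap at the crux of the proof: the argument that the cutoff step (deleting $V_+$ and its incident edges) preserves the parity invariant (\ref{ass_B2}$_2$). You explicitly defer this with ``I plan to formalize this via a local-accounting lemma'' and ``this reduces to a careful bookkeeping,'' so the hardest claim is sketched rather than proven. Moreover, the lemma as you state it---crossings lost by deletion equal mod $2$ the crossings contributed by the new chords---is not the right shape: the new chords are each counted twice in the bounding walks of the two subfaces they create, so their contribution to $N(r)$ is trivially even, and this says nothing about the deletions. The paper instead proves the deletion step directly by a four-way case analysis on the faces incident with the deleted edge. Cases (i) and (ii) give a decrease by $2$, case (iii) gives no change, and case (iv)---merging a valid face $f_1$ with an invalid one---is where the essential geometry enters: because all vertices of $W(f_1)$ lie in $V_0 \cup V_+$, their coordinates lie in the half-plane $\{x \mid A_{i+1}x \geq 1\}$, so the number of ray crossings $N(f_1,r)$ equals the number of \emph{line} crossings of the closed polygon $c(W(f_1))$ with $\R\cdot r$, which is even; hence dropping $f_1$ from the sum preserves parity. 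Your sketch invokes ``planarity'' and ``the four-part definition of a bounding walk'' but does not identify this half-plane observation, which is the actual geometric ingredient that makes the bookkeeping close. Without it, the parity-preservation claim is unsupported.
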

\begin{proof}
	First, we show by induction that $(G,c)$ is regular at termination of Algorithm \ref{ga}. 
	
After initialization (i.e.\ directly before the outer loop), $G$ is the complete graph $K_3$ and $c(G)$ is a triangle with zero in its interior defined by three inequalities $A_{[3]} x \leq (1+\frac{\varepsilon}{2}) \one$. 
There is exactly one valid face $f$ with a bounding walk $W(f) = (v_1,e_3,v_2,e_1,v_3,e_2,v_1)$. 
In every vertex $c(v_i)$, $i=1,2,3$, two of the three inequalities are active (i.e.\ hold with equality). 
This implies property (\ref{ass_A2}$_2$). 
For a dense subset of directions $r$ we have $c(v_i) \not \in \R \cdot r$, $i=1,2,3$ and thus $N(f,r)=1$. 
Since $f$ is the only valid face, property (\ref{ass_B2}$_2$) follows. 
Thus $(G,c)$ is regular after initialization. 
	
	Assume now that $(G,c)$ is regular after iteration $i$. An edge which is added in iteration $i+1$ satisfies property (\ref{ass_A2}$_2$) after iteration $i+1$: either for $\kappa(uv) \colonequals \kappa(uw)$, $\kappa(vw) \colonequals \kappa(uw)$ if $uv$ and $vw$ arose from $uw$ in the first inner loop of $\kappa(e) \colonequals i+1$ if $e$ was added in the second inner loop. So let us consider property (\ref{ass_B2}$_2$). An edge $e=uv$ added to $E$ in iteration $i+1$ splits a valid face into two new valid faces. If $c(e)$ crosses the ray $R$, $N(r)$ is increased by $2$, otherwise it is not changed. Thus $N(r)$ remains odd. If an edge $e \in E$ with $c(e)$ crossing $R$ is deleted in iteration $i+1$, we distinguish four cases. (i) If $e$ is incident with two valid faces, $N(r)$ is decreased by $2$. (ii) If $e$ is incident with exactly one valid face, it occurs twice in a bounding walk of this face and thus $N(r)$ is decreased by $2$. (iii) If $e$ is incident with exactly one invalid face, deleting it has no influence to $N(r)$ because only valid faces are involved. (iv) If $e$ is incident with a valid face $f_1$ and an invalid face $f_2$, these faces are merged into an invalid face $f$. For an edge $e=uv$ to be deleted we have $u,v \in V_0 \cup V_+$ since $u$ (or $v$) belongs to $V_+$ and by the first inner loop of the algorithm, $v$ (or $u$) cannot belong to $V_-$. Moreover, by the second inner loop of Algorithm \ref{ga}, all vertices of the bounding walk $W(f_1)$ belong to $V_0 \cup V_+$. This implies that for every vertex $v$ in $W(f_1)$, $c(v)$ belongs to the half-plane $\{x \in \R^2 \mid  A_{i+1} x \geq 1\}$. Thus $N(f_1,r)$ coincides with the number of line segments $c(e)$ for edges $e$ in $W(f_1)$ which have common points with the line $\R\cdot r$. The polygon in $\R^2$ defined by $W(f_1)$ and $c$ has an even number $k$ of crossing points with the line $\R\cdot r$. Since the new face $f$ is invalid, deleting $e$ reduces $N(r)$ exactly by this even number $k$. This shows that property (\ref{ass_B2}$_2$) is maintained.
	
Secondly, we show the inclusions $P \subseteq \conv c(V) \subseteq (1+\varepsilon) P$. The second inclusion is obviously satisfied, since $V_+$ is deleted at the end of each outer iteration. So let us prove the first inclusion. At termination of the algorithm, let $r \in P \setminus \{0\}$ such that (\ref{ass_B2}$_2$) holds. Since $N(r)$ is odd, there exists a face $f$ such that $N(f,r) > 0$. Hence there exists an edge $e \in E$ and some $\mu > 0$ such that $\mu\cdot r \in c(e)$. By (\ref{ass_A2}$_2$) we have $c(e) \in B_+(e)$ and thus $c(e) \cap \inter P = \emptyset$. We obtain $\mu \geq 1$. We conclude: For a dense subset of directions $r \in P \setminus \{0\}$ there exists $\mu \geq 1$ with $\mu r \in \conv c(V)$. Since $P$ and $\conv c(V)$ are convex polytopes, we obtain the inclusion $P \subseteq \conv c(V)$. 
\end{proof}

Now let us consider the case $d=3$.
Let $r \in \R^3\setminus \{0\}$ be a direction.
For a vector $a \in \R^3$, linearly independent of $r$, the set 
$$M \colonequals M(r,a) \colonequals \R \cdot r + \R_+ \cdot a$$
defines a half-plane in $\R^3$ whose relative boundary is the line $\R\cdot r$. Subsequently, we assume that for all $v \in V$, $c(v) \not \in M$ and for all edges $e \in E$, $c(e) \cap \R \cdot r = \emptyset$.

Let $f$ be a face of $G$ and let $W(f)$ be an associated bounding walk. Let
$N(f,r,a)$ be the number of line segments $c(e)$ for edges $e$ in $W(f)$ which have a common point with the half-plane $M=M(r,a)$. 
For short we say that $N(f,r,a)$ is the {\em number of boundary crossings} of the skew polygon $c(W(f))$ with the half-plane $M$. If an edge $e$ is twice contained in $W$, it is counted twice in case of crossing. We start with a statement which is used below in the proof of the correctness result.

\begin{proposition} \label{prop_ind}
	The property of $N(f,r,a)$ being odd is independent of the choice of the vector $a$.
\end{proposition}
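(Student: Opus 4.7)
The plan is to argue by continuous deformation of $a$. Since $\R^3 \setminus \R\cdot r$ is path-connected (a line has codimension~$2$ in $\R^3$), any two admissible choices $a_0, a_1$ can be joined by a continuous path $t \mapsto a(t)$, $t\in[0,1]$. After a small generic perturbation of this path I may assume that at all but finitely many parameters $t_0$ the half-plane $M(r,a(t))$ contains no vertex $c(v)$ of $W(f)$, at each exceptional $t_0$ it contains exactly one such vertex and no whole edge, and all intersections $c(e) \cap M(r,a(t))$ are transversal. Between consecutive critical values no crossing can appear or disappear, so $N(f,r,a(t))$ is locally constant. It therefore suffices to show that the change in $N(f,r,a(t))$ across each critical $t_0$ is even.

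To carry this out, I would project along $r$ onto a plane transverse to $r$, giving a linear map $\pi\colon \R^3 \to \R^2$ with $\ker\pi = \R\cdot r$. Under $\pi$ the half-plane $M(r,a(t))$ becomes a ray $R(t)$ from $0$ whose direction rotates with $t$; each edge $c(e)$ becomes a segment $\pi(c(e))$ that avoids $0$ (by the standing assumption $c(e)\cap\R\cdot r=\emptyset$); and each vertex $c(v)$ becomes a point $\pi(c(v))\neq 0$. At the critical $t_0$ there is a vertex $c(v)$ with $\pi(c(v)) \in R(t_0)$. Consider a single appearance of $v$ in the walk $W(f)$, flanked by edges $u_1 v$ and $vu_2$. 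A direct angular calculation, using that each segment $[\pi(c(v)),\pi(c(u_i))]$ subtends an angular interval of width strictly less than $\pi$ at $0$ (because the segment does not pass through $0$), shows that as $R(t)$ sweeps past the direction of $\pi(c(v))$, each of the two incident projected segments gains or loses exactly one crossing with $R(t)$; the two changes combine to $0$ or $\pm 2$, depending on whether $\pi(c(u_1))$ and $\pi(c(u_2))$ lie on the same or opposite angular sides of $R(t_0)$, and are in particular always even.

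The main obstacle, as is typical for such combinatorial topology arguments, is the bookkeeping of multiplicities rather than the underlying geometry: a vertex $v$ may appear several times in the bounding walk $W(f)$, an edge may be traversed twice in $W(f)$, and the perturbed path $a(t)$ must also avoid configurations in which $M(r,a(t))$ meets two vertices of $W(f)$ simultaneously or contains an entire projected edge. However, each repeated appearance of $v$ contributes its own even change by the local argument above, each doubly-traversed edge contributes $0$ or $2$ to $N(f,r,a(t))$ consistently in $t$, and the bad configurations for the path $a(t)$ form a finite union of lower-dimensional subsets of the admissible parameter space $\R^3 \setminus \R\cdot r$, so they can be avoided by a generic perturbation. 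Once these genericity issues are dispatched, summing the even contributions over all appearances of the critical vertex $v$ shows that $N(f,r,a(t)) \bmod 2$ is unchanged across each critical $t_0$, hence constant in $t$, which is the statement of the proposition.
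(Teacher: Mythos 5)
Your argument is correct in outline, but it takes a substantially more laborious route than the paper's. You use a homotopy argument: join $a_0$ to $a_1$ by a generic path $a(t)$ in $\R^3\setminus\R\cdot r$, show $N(f,r,a(t))$ is locally constant away from finitely many critical parameters, and then, by projecting along $r$ and doing a local angular analysis at each critical parameter (where the rotating ray sweeps past the projection of a vertex), show the change in $N$ is always even. This requires careful genericity bookkeeping (avoiding simultaneous vertex hits, doubly traversed edges, etc.), which you acknowledge and dispatch, but it is real work.

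The paper instead uses a direct global parity argument with no deformation at all: given two admissible choices $a$ and $a'$ with $M=M(r,a)\neq M'=M(r,a')$, the set $M\cup M'$ separates $\R^3\setminus(\R\cdot r)$ into two regions (and the skew polygon $c(W(f))$ avoids $\R\cdot r$ by the standing assumption). Since $c(W(f))$ is a closed curve avoiding $M\cap M'=\R\cdot r$ and meeting $M\cup M'$ transversally (again by the standing assumptions on vertices and edges), the total number of crossings $N(f,r,a)+N(f,r,a')$ with the separating set is even, which immediately gives $N(f,r,a)\equiv N(f,r,a')\pmod 2$. What the paper's argument buys is brevity and avoidance of all genericity-of-the-path and multiplicity bookkeeping: only the two given choices $a,a'$ are compared, so one never needs to worry about intermediate configurations, and the "even number of crossings of a closed curve with a separating surface" fact absorbs all the case analysis in one stroke. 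Your approach is more general-purpose (it is essentially the proof that the mod-2 intersection number is a homotopy invariant), but for this particular claim it is overkill.
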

\begin{proof}
Let $a, a' \in \R^3$ be two different choices with corresponding sets $M=M(r,a)$ and $M'=M(r,a')$ and corresponding numbers $N(f,r,a)$ and $N(f,r,a')$. If $M = M'$ the statement is obvious. Otherwise the set $M \cup M'$ partitions the space $\R^3$ into two regions.
We assumed that for no edge $e$ of the bounding walk $W(f)$, $c(e)$ crosses the line $M \cap M' = \R \cdot r$ and for no vertex $v$ of the bounding walk $W(f)$, $c(v)$ belongs to $M \cup M'$. Since a skew polygon defined by $W(f)$ and $c$ is a closed curve in $\R^3$, the number $m+m'$ of edges crossing the set $M \cup M'$ is even. Thus if $m$ is odd then so is $m'$.
\end{proof}

A plane graph $G=(V,E)$ together with a coordinate function $c : V \to \R^3$, $(G,c)$ for short, is called {\em regular} if  
\begin{enumerate}[(A{$_3$})]
	\item \label{ass_A3} For every face $f$ of $G$ there exists an index $\kappa(f) \in \{1,\dots,m\}$ such that for every edge $e$ of a bounding walk $W(f)$, the line segment $c(e)$ belongs to the half-plane 
		\begin{equation*}
	  		B_+(f) \colonequals \{x \mid A_{\kappa(f)} x \geq 1\}.
	  \end{equation*}
	\item \label{ass_B3} For a dense subset of the set of two linearly independent vectors $r,a \in \R^3$, the number $N(r,a) \colonequals \sum_{f \in F(r)} N(f,r,a)$, where $F(r)$ denotes the set of all faces of $G$ satisfying $A_{\kappa(f)} r > 0$, is odd.
\end{enumerate}

\begin{theorem} \label{th_ga_3}
	Algorithm \ref{ga} is correct for $d=3$.
\end{theorem}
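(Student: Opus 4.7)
The plan is to follow the template of the proof of Theorem~\ref{th_ga_2}: first establish by induction on the outer loop counter that $(G,c)$ is regular at termination in the sense of (\ref{ass_A3}$_3$) and (\ref{ass_B3}$_3$), and then use regularity to derive $P \subseteq \conv c(V)$. The opposite inclusion $\conv c(V) \subseteq (1+\varepsilon) P$ is immediate because $V_+$ is removed in every outer iteration, so every surviving vertex satisfies $A_i c(v) \leq 1 + \varepsilon$ for all~$i$.

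For the base of the induction, initialization produces the tetrahedron $(1+\varepsilon/2)S$ with zero in its interior and four (all valid) triangular faces. Each face $f$ has bounding walk contained in the hyperplane $\{A_{\kappa(f)} x = 1 + \varepsilon/2\} \subseteq B_+(f)$, giving (\ref{ass_A3}$_3$). For (\ref{ass_B3}$_3$) and a generic $r$, the ray $\mathbb{R}_+ r$ exits the tetrahedron through exactly one facet $f^*$, which belongs to $F(r)$ (as $A_{\kappa(f^*)} r > 0$) and contributes $N(f^*, r, a) = 1$; the antipodal facet has $A_{\kappa(\cdot)} r < 0$ and lies outside $F(r)$, while the remaining two facets are missed by $\mathbb{R} r$ and contribute $0$, so $N(r,a)=1$. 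In the inductive step, (\ref{ass_A3}$_3$) is tracked face-by-face: edge subdivisions in the first inner loop place new points on pre-existing segments of $B_+(f)$; the edge $u_0 w_0$ added in the second inner loop splits a face $f$ into $f_1$ (set $\kappa(f_1) \colonequals i+1$, valid since $V_+ \cup V_0 \subseteq \{A_{i+1} x \geq 1\}$) and $f_2$ (inherit $\kappa(f_2) \colonequals \kappa(f)$, valid since $c(u_0), c(w_0) \in B_+(f)$); and the merger after deletion of $V_+$ and its incident edges produces a face whose bounding walk, by the sealing effect of the second inner loop, contains only $V_0$-vertices and is assigned $\kappa = i+1$. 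Property (\ref{ass_B3}$_3$) is then maintained by a case analysis parallel to the four-case argument in the proof of Theorem~\ref{th_ga_2}, tracking how $N(r,a)$ changes mod~$2$ under each operation while the sign of $A_{\kappa(\cdot)} r$ controls which faces belong to $F(r)$.

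For the deduction step, pick a generic $r \in P \setminus \{0\}$. By (\ref{ass_B3}$_3$) there is a face $f \in F(r)$ with $N(f, r, a)$ odd, and Proposition~\ref{prop_ind} makes this parity intrinsic to $(f,r)$. Let $\pi$ denote the projection $\mathbb{R}^3 \to \mathbb{R}^3/\mathbb{R} r \cong \mathbb{R}^2$, so that $\pi(M(r,a)) = \mathbb{R}_+ \pi(a)$, and let $\gamma \colonequals \pi \circ c(W(f))$. Then intersections of $c(W(f))$ with $M(r,a)$ biject with intersections of $\gamma$ with the ray through $\pi(a)$, so $\gamma$ crosses every ray from the origin an odd number of times and therefore has odd winding number about the origin. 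Consequently $0 \in \conv \gamma = \pi(\conv c(W(f)))$, giving a point $p = \mu r \in \conv c(W(f)) \cap \mathbb{R} r$. By (\ref{ass_A3}$_3$), $\mu\, A_{\kappa(f)} r \geq 1$; combined with $A_{\kappa(f)} r \in (0,1]$ (positive because $f \in F(r)$, at most~$1$ because $r \in P$), this forces $\mu \geq 1$. Since $0 \in \conv c(V)$ is maintained throughout the run, the convex combination $r = (1/\mu)\cdot \mu r + (1-1/\mu)\cdot 0$ lies in $\conv c(V)$; density of the chosen directions and closedness of $\conv c(V)$ then give $P \subseteq \conv c(V)$.

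The main obstacle will be the parity bookkeeping for (\ref{ass_B3}$_3$) during the deletion of $V_+$ and its incident edges, where several pre-merge faces carrying possibly distinct values of $\kappa$ (some inherited from earlier iterations, one freshly assigned $\kappa = i+1$) collapse into a single face, and each individual face's $F(r)$-membership depends on the sign of the corresponding $A_{\kappa(\cdot)} r$. Establishing that the mod-$2$ sum of crossings over the old in-$F(r)$ faces equals the sum over the new in-$F(r)$ faces must exploit the containment of each bounding walk in its half-space $B_+(\cdot)$ together with the closedness of every walk, analogously to how the $d=2$ proof uses that an invalid face's polygon has even intersection with the whole line $\mathbb{R} r$ to discount its contribution even when $\mathbb{R}_+ r$ crosses it oddly.
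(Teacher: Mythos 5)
Your overall template matches the paper's: show by induction that $(G,c)$ stays regular in the sense of (\ref{ass_A3}$_3$) and (\ref{ass_B3}$_3$), then deduce $P\subseteq\conv c(V)$ via parity. The base case and the deduction step are fine (your winding-number argument via the quotient $\R^3/\R r$ is a legitimate alternative to the paper's device of picking crossing points with $M(r,a)$ and $M(r,-a)$ and taking their convex combination on $\R r$; both are correct, and both already lean on Proposition~\ref{prop_ind}).

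The gap is in the inductive maintenance of (\ref{ass_B3}$_3$), which you explicitly flag but do not resolve, and your choice of $\kappa$-assignment for the split step actively creates the difficulty you are worried about. You propose to set $\kappa(f_1)\colonequals i+1$ on the ``sealed'' side of the new edge $u_0w_0$ and let the other side inherit $\kappa(f_2)\colonequals\kappa(f_0)$. Both assignments are admissible for (\ref{ass_A3}$_3$), but they wreck the parity bookkeeping: when $c(u_0w_0)$ crosses $M$ and $\operatorname{sign}(A_{i+1}r)\neq\operatorname{sign}(A_{\kappa(f_0)}r)$, exactly one of the two children lands in $F(r)$, so the split changes $N(r,a)$ by $\pm N(f_1,r,a)$, which is not obviously even. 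The paper avoids this entirely by letting \emph{both} children inherit $\kappa(f_0)$ (valid for (\ref{ass_A3}$_3$) since $c(u_0),c(w_0)\in B_+(f_0)$ and half-spaces are convex), so the two children have the same $F(r)$-membership as the parent and adding the edge changes $N(r,a)$ by $0$ or $2$. The re-assignment to $\kappa=i+1$ is made \emph{only} for faces whose bounding walk lies entirely in $V_0\cup V_+$ — exactly the faces incident with the $V_+$-adjacent edges that are about to be deleted. Then every face touched by a deletion carries the same value $\kappa=i+1$, so $F(r)$-membership is uniform across a merge and the deletion changes $N(r,a)$ by $0$ or $2$.

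There is one step the paper leaves implicit that your ``main obstacle'' paragraph gestures at but does not nail: the re-assignment of $\kappa(f)$ from its old value $j$ to $i+1$ must itself preserve the parity of $N(r,a)$. The reason is that such a face $f$ has $c(W(f))\subseteq\{A_j x\geq 1\}\cap\{A_{i+1}x\geq 1\}$, a convex set; if $\operatorname{sign}(A_j r)\neq\operatorname{sign}(A_{i+1}r)$ (generically these are nonzero), that convex set is disjoint from the line $\R r$, hence (by a separating-hyperplane argument and Proposition~\ref{prop_ind}) $N(f,r,a)$ is even, so toggling $f$ in or out of $F(r)$ does not change the parity of $N(r,a)$. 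Without this observation — and without choosing $\kappa$ uniformly across split children — your parity bookkeeping does not close.

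One smaller inaccuracy: you appeal to a ``four-case argument in the proof of Theorem~\ref{th_ga_2}'', but the $d=2$ proof has four deletion cases only because of valid/invalid faces; for $d=3$ every face is valid and the paper's deletion analysis has just two cases. Also, in the base case the two facets of $K_4$ not pierced by $\R r$ contribute an \emph{even} number of crossings with $M(r,a)$, not necessarily zero.
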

\begin{proof}
	First, we show by induction that $(G,c)$ is regular at termination of Algorithm \ref{ga}. 
	
After initialization (i.e.\ directly before the outer loop), $G$ is the complete graph $K_4$ and $c(G)$ is a simplex with zero in its interior defined by four inequalities $A_{[4]} x \leq (1+\frac{\varepsilon}{2}) \one$. In every vertex $c(v_i)$, $i=1,\dots,4$, three of the four inequalities are active (i.e.\ hold with equality). This implies property (\ref{ass_A3}$_3$). 

For a dense subset of the set of two linearly independent vectors $r,a \in \R^3$ we have $c(v) \not \in M(r,a)$ and $c(e) \cap \R_+\cdot r =\emptyset$ for any vertex $v$ and any edge $e$ which occur in the algorithm. Let such vectors $r$ and $a$ be fixed. There is exactly one face $f_+$ of $K_4$ such that $\R_+ \cdot r$ crosses the relative interior of the facet $c(W(f_+))$ of the simplex $c(K_4)$. And there is exactly one face $f_- \neq f_+$ of $K_4$ such that $\R_- \cdot r$ crosses the relative interior of the facet $c(W(f_-))$ of $c(K_4)$. We have $N(f_+,r,a)=N(f_-,r,a)=1$. Clearly, $f_+$ belongs to $F(r)$ and $f_-$ does not. For the remaining two faces $f$ of $K_4$, $N(f,r,a)$ is even. Consequently, $N(r,a)$ is odd and hence $(G,c)$ is regular after initialization. 

	Assume now that $(G,c)$ is regular after iteration $i$. Let $f$ be a face of $G$ which is created in iteration $i+1$. Adding new vertices $v$ in the first inner loop, maintains property (\ref{ass_A3}$_3$) since $c(v)$ is chosen on the line between $c(u),c(w)$ for $u,w$ belonging to the same face.
	If $f$ is created in the second inner loop of iteration $i+1$ by adding an edge then another face $f_0$ were split into two new faces $f$ and $f_1$. Property (\ref{ass_A3}$_3$) holds for the parent face $f_0$ and consequently also for the new faces $f$ and $f_1$. In the last case, $f$ is created by merging two faces $f_1$ and $f_2$ into a new face $f$ as a consequence of deleting an edge $e=uv$ after the second inner loop. In this situation, we have $u,v \in V_0 \cup V_+$. Moreover, by the second inner loop, all vertices of the bounding walks $W(f_1)$ and $W(f_2)$ belong to $V_0 \cup V_+$. Hence, the 
	vertices of the bounding walk $W(f)$ belong to $V_0 \cup V_+$, which yields that for every vertex $v$ in $W(f)$, $c(v)$ belongs to the half-space $\{x \in \R^2 \mid  A_{i+1} x \geq 1\}$. This means that (\ref{ass_A3}$_3$) holds for such faces with $\kappa(f) = i+1$. Consequently, (\ref{ass_A3}$_3$) is maintained in iteration $i+1$.
	
	So let us consider property (\ref{ass_B3}$_3$). An edge $e=uv$ added to $E$ in iteration $i+1$ splits a face $f$ into two new faces. If $c(e)$ crosses the half-plane $M$ and $f\in F(r)$, $N(r,a)$ is increased by $2$, otherwise it is not changed. Thus $N(r,a)$ remains odd.

	If an edge $e \in E$ with $c(e)$ crossing $M$ is deleted in iteration $i+1$, we distinguish two cases. (i) Let $e$ be incident with exactly one face $f$. As already seen, we have $\kappa(f) = i+1$. If $A_{i+1} r > 0$, $N(r,a)$ is decreased by $2$, otherwise it is not changed. (ii) Let $e$ be incident with exactly two faces $f_1$ and $f_2$ which are merged to a new face $f$ by deleting $e$. Again, we have $\kappa(f_1)=\kappa(f_2)=i+1$. If $A_{i+1} r > 0$, both $f_1$ and $f_2$ belong to $F(r)$, otherwise both do not. Thus $N(r,a)$ is either decreased by $2$ or is left unchanged.
This shows that property (\ref{ass_B3}$_3$) is maintained.
	
Secondly, we show the inclusions $P \subseteq \conv c(V) \subseteq (1+\varepsilon) P$. The second inclusion is obviously satisfied, since $V_+$ is deleted at the end of each outer iteration. Let us prove the first inclusion. At termination of the algorithm, let $r \in P \setminus \{0\}$ and $a \in \R^3$, linearly independent of $r$ such that (\ref{ass_B3}$_3$) holds. Since $N(r,a)$ is odd, there exists a face $f$ such that $N(f,r,a)$ is odd. By Proposition \ref{prop_ind}, $N(f,r,a)$ is odd for a dense subset of all possible choices of $a$. Thus, without loss of generality, $N(f,r,-a)$ is odd. This implies that
$N(f,r,a)>0$ and $N(f,r,-a)>0$. Hence there are edges $e_+,e_-$ in $W(f)$ and crossing points $x_+ \in c(e_+) \cap M(r,a)$ and $x_- \in c(e_-) \cap M(r,-a)$. We have $H \colonequals M(r,-a) = -M(r,a)$ and thus $M(r,a)\cup M(r,-a)$ is a plane. Every convex combination $x$ of $x_+$ and $x_-$ belongs to $H$ and there is one such convex combination $x$ which belongs to the line $\R\cdot r$. By property (\ref{ass_A3}$_3$) we have $A_{\kappa(f)} x_+ \geq 1$ and $A_{\kappa(f)} x_- \geq 1$ and thus $A_{\kappa(f)} x \geq 1$. Since $A_{\kappa(f)} r > 0$, there exists $\mu > 0$ such that $x = \mu r$. By $A_{\kappa(f)} \mu r \geq 1$ and $A_{\kappa(f)} r \leq 1$, we conclude $\mu \geq 1$. Since $x_+$ and $x_-$ are convex combinations of $c(v)$ for vertices $v$ of $W(f)$, $x$ has the same property and thus $x=\mu r$ belongs to $\conv c(V)$. We conclude: For a dense subset of directions $r \in P \setminus \{0\}$ there exists $\mu \geq 1$ with $\mu r \in \conv c(V)$. Since $P$ and $\conv c(V)$ are convex polytopes, we obtain the inclusion $P \subseteq \conv c(V)$. 
\end{proof}

\section{Using imprecise arithmetic}\label{sec_imprec}

In this section we show that specific variants of both Algorithms \ref{addm} and \ref{ga} from the previous section (i.e.\ $d\leq 3$) are still correct if imprecise arithmetic (such as floating point arithmetic) is used and the imprecision of computations is not too high. 

The specifications (which are the same in both algorithms) are as follows:
\begin{enumerate}[(i)]
	\item For a vertex $v$ added to $V$ in the outer iteration $i$, define $c(v)$ such that $A_i c(v) = 1 + \frac{\varepsilon}{2}$ (which specifies the condition $1 \leq A_i c(v) \leq  1 + \varepsilon$ in both algorithms)
	\item Partition $V$ into sets $V_-$, $V_0$, $V_+$ in the outer iteration $i$ by setting 
	\begin{enumerate}[(a)]
		\item $V_- \colonequals \{v \in V\mid A_i c(v) < 1 +\frac{\varepsilon}{4}\} \subseteq \{v \in V\mid A_i c(v) < 1 +\frac{\varepsilon}{2}\}$
		\item $V_0 \colonequals \{v \in V\mid 1+ \frac{\varepsilon}{4} \leq A_i c(v) \leq 1 +\frac{3}{4}\varepsilon\}\subseteq \{v \in V\mid 1 \leq A_i c(v) \leq 1 + \varepsilon\}$,
		\item $V_+ \colonequals \{v \in V\mid A_i c(v) > 1 +\frac{3}{4}\varepsilon\} \subseteq \{v \in V\mid A_i c(v) > 1 +\frac{\varepsilon}{2}\}$
	\end{enumerate}
\end{enumerate}
Clearly, Algorithms \ref{addm} and \ref{ga} are still correct with these specifications.

The idea now is to formulate a condition which guaranties that the computation of $c(v)$ with imprecise arithmetic still satisfies the original conditions in Algorithms \ref{addm} and \ref{ga} if the specific rules are used in the code. For instance in (i) we define $c(v)$ such that $A_i c(v) = 1 + \frac{\varepsilon}{2}$ holds but in Algorithms \ref{addm} and \ref{ga} it is only required that $1 \leq A_i c(v) \leq  1 + \varepsilon$. Likewise, in (ii)(a), we have $A_i c(v) < 1 +\frac{\varepsilon}{4}$ but only $A_i c(v) < 1 +\frac{\varepsilon}{2}$ is required. We have similar situations in (ii) (b) and (c). 

Let us extend algorithms Algorithms \ref{addm} and \ref{ga} by a second coordinate function. Assume the first coordinate function $c:V \to \R^d$ is computed using imprecise arithmetic and the second coordinate function $\bar c:V \to \R^d$ is computed by exact arithmetic. Assume further that only $c$ is used to partition the set $V$. The above specifications are used in both cases.

By comparing the specifications with the requirements in the algorithms, we see that the correctness results still hold for imprecise arithmetic (using the specifications in the code) if the following condition holds for all $v \in V$ that occur in Algorithm \ref{ga} (those which occur in Algorithm \ref{addm} but not in Algorithm \ref{ga} are not relevant for the correctness results):
\begin{equation}\label{eq_err}
	\max_{i \in [m]} |A_i c(v) - A_i \bar c(v)| \leq \frac{\eps}{4}. 
\end{equation}
By the  Cauchy-Schwartz inequality we see that 
\begin{equation}\label{eq_err1}
	 \max_{i \in [m]} \|A_i^T\| \cdot \|c(v)- \bar c(v)\| \leq  \frac{\eps}{4}
\end{equation}	 
is sufficient for \eqref{eq_err} to hold, where $\|.\|$ denotes the Euclidian norm.

For a special class of polytopes the condition to guaranty correctness of Algorithms \ref{addm} and \ref{ga} can even be simplified. Assume that a ball around the origin with radius $\delta > 0$ is contained in the given polytope $P$. Then we have $\|A_i^T\| \leq \frac{1}{\delta}$ for all $i$. Thus the condition 
\begin{equation}\label{eq_err2}
 \|c(v)- \bar c(v)\| \leq  \frac{\eps \cdot \delta}{4}
\end{equation}	 
implies \eqref{eq_err1} and hence \eqref{eq_err}. Let us summarize these results.
\begin{corollary} Consider the approximate vertex enumeration problem for a polytope $P$ containing a ball around the origin with radius $\delta > 0$. Let 
$$ E \colonequals \max \{\|c(v)- \bar c(v)\| \mid \text{$v$ occurs during a run of Algorithm \ref{ga}} \}$$
denote the maximum error for the coordinate function caused by using imprecise arithmetic in Algorithm \ref{ga}. Then Algorithms \ref{addm} and \ref{ga} are correct if 
$$ E \leq \frac{\eps \cdot \delta}{4}.$$
\end{corollary}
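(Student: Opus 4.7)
The plan is to verify that the hypothesis $E \leq \frac{\eps\delta}{4}$ forces condition \eqref{eq_err} to hold for every vertex $v$ that occurs during a run of Algorithm \ref{ga}. Once this is in place, the discussion immediately preceding the corollary already establishes correctness of both Algorithms \ref{addm} and \ref{ga}: condition \eqref{eq_err} is exactly what is needed so that the specific partitioning rules (i)--(ii) computed via the imprecise coordinate function $c$ still satisfy the relaxed requirements of the original algorithms when interpreted via the exact coordinate function $\bar c$.

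The single nontrivial step is to bound $\|A_i^T\|$ in terms of $\delta$. First I would observe that, since the ball $B(0,\delta) = \{x \in \R^d \mid \|x\| \leq \delta\}$ is contained in $P = \{x \mid A x \leq \one\}$, the particular point $x = \delta\, A_i^T/\|A_i^T\|$ lies in $P$ for every $i$ such that $A_i \neq 0$. Evaluating the $i$-th constraint at this point yields $\delta\,\|A_i^T\| \leq 1$, hence $\|A_i^T\| \leq \tfrac{1}{\delta}$. This is exactly the inequality implicitly invoked in the paragraph preceding the statement.

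Combining this with Cauchy--Schwarz and the hypothesis $E \leq \tfrac{\eps\delta}{4}$, for every $v$ occurring in Algorithm \ref{ga} I obtain
\begin{equation*}
\max_{i\in[m]} |A_i c(v) - A_i \bar c(v)|
\leq \max_{i\in[m]} \|A_i^T\|\cdot \|c(v)-\bar c(v)\|
\leq \frac{1}{\delta}\cdot E \leq \frac{1}{\delta}\cdot \frac{\eps\delta}{4} = \frac{\eps}{4}.
\end{equation*}
This is precisely inequality \eqref{eq_err}. Invoking the correctness argument already given (the partitioning rule (ii)(a) produces $V_-$ satisfying $A_i \bar c(v) < 1+\tfrac{\eps}{2}$; likewise for $V_0$ and $V_+$, and the point chosen in (i) satisfies $1 \leq A_i \bar c(v) \leq 1+\eps$) completes the proof.

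I do not anticipate a real obstacle: the conceptual content has been absorbed into the preceding derivation, and the corollary merely packages it. The only point requiring care is to restrict the maximum $E$ to vertices that actually occur in Algorithm \ref{ga}, since vertices generated only by Algorithm \ref{addm} but not by Algorithm \ref{ga} are irrelevant for the correctness proof of Section \ref{sec_23} (correctness of Algorithm \ref{addm} is deduced from that of Algorithm \ref{ga} via Corollary \ref{cor_rel}, so it suffices to control the error on the graph-algorithm side).
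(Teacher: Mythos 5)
Your proof is correct and follows essentially the same route as the paper: it is precisely the chain $E\leq \eps\delta/4 \Rightarrow \eqref{eq_err2} \Rightarrow \eqref{eq_err1} \Rightarrow \eqref{eq_err}$ laid out in the paragraph preceding the corollary, with the only addition being an explicit (and correct) verification of $\|A_i^T\|\leq 1/\delta$ via the point $\delta A_i^T/\|A_i^T\|\in B(0,\delta)\subseteq P$, which the paper states without proof. Your closing remark about restricting $E$ to vertices of Algorithm~\ref{ga} also matches the paper's parenthetical justification via Corollary~\ref{cor_rel}.
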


Of course, the error $E$ caused by imprecise computations is difficult to quantify in practice, where computations with exact arithmetic shall be avoided. Nevertheless the result can help to evaluate the reliability of computational results. The larger the approximation error $\varepsilon> 0$ is chosen and the larger the radius $\delta>0$ of a ball around the origin inside of $P$, the more reliable the computational results are.

\section{Numerical results} \label{sec_num}

We present in this section numerical results for two examples of dimension $3$. In particular, we use them to compare the shortcut algorithm (SCA) with the approximate double description method (ADDM). 

The shortcut algorithm was implemented using a {\em half-edge data structure}, also known as {\em doubly connected edge list}, see e.g. \cite{CompGeom}, in order to store the planar graph. Most of the implementation is straightforward. To implement the second inner loop of Algorithm \ref{ga}, we iterate over the edges of the graph and store edges with one endpoint in $V_0$ and the other endpoint in $V_+$ in a queue. Then we remove an edge from the queue and walk around the incident face in order to insert new edges inside this face. If we meet a member of the queue we remove it from the queue. We repeat this procedure until the queue is empty.

Both algorithms were implemented in Python. The computations were made on a desktop computer with 2,6 GHz CPU clock speed. The examples were generated by {\tt bensolve tools} \cite{bt, bt-paper}.

\begin{example}\label{ex1}
	 Consider the linear image $P = \{M h \mid h \in H\}$ of a hypercube $H=[-1,1]\times ...\times [-1,1]$ of dimension $5^3=125$, where the linear mapping is given by the uniquely defined matrix $M \in \{-2,-1,0,1,2\}^{3 \times 125}$ with pairwise different columns. Figure \ref{ave_fig_2a} right shows (a good approximation of) the polytope. 
\end{example}

\begin{figure}[hpt]
	\begin{center}
	\includegraphics[width =.22\textwidth]{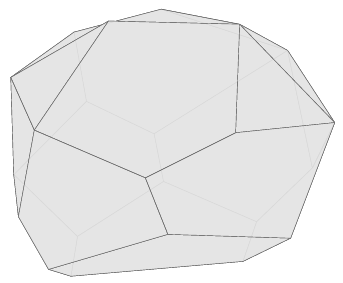}
	\includegraphics[width =.22\textwidth]{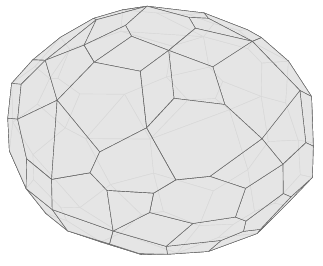}
	\includegraphics[width =.22\textwidth]{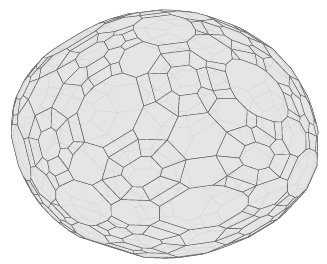}
	\includegraphics[width =.22\textwidth]{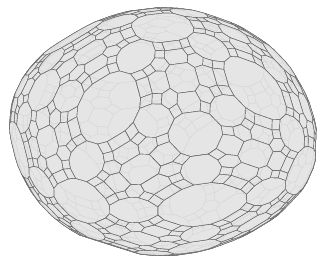}
	\end{center}
\caption{The polytope of Example \ref{ex1} computed by the shortcut algorithm for different tolerances $\varepsilon \in \{10^0,10^{-1},10^{-2},10^{-3}\}$ (from left to right). For tolerances smaller than $10^{-3}$, the pictures are ``almost identical'' to the right one.}\label{ave_fig_2a}
\end{figure}

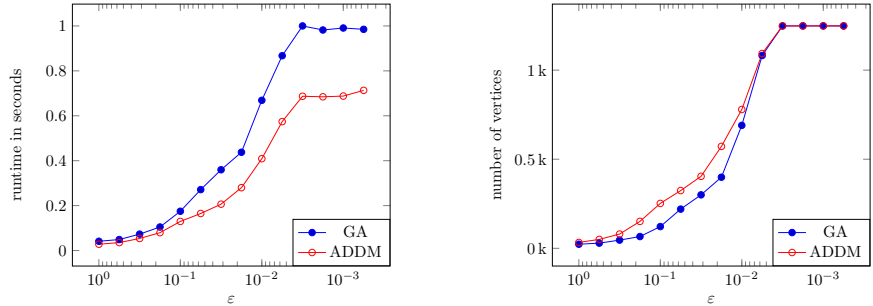
\begin{figure}[hpt]
\begin{center}
\begin{tikzpicture}[scale=.61]
\begin{axis}[xmode=log, x dir=reverse,
  xlabel=$\varepsilon$,legend style={at={(1,0)},anchor=south east},ylabel=runtime in seconds]
\addplot table [x=eps, y=time_ga, col sep=comma] {ave_fig_3.csv};
\addlegendentry{SCA}
\addplot [color=red,mark=o,red] table [x=eps, y=time_addm, col sep=comma] {ave_fig_3.csv};
\addlegendentry{ADDM}
\end{axis}
\end{tikzpicture}
\hspace{1cm}
\begin{tikzpicture}[scale=.61]
\begin{axis}[xmode=log, x dir=reverse,
  xlabel=$\varepsilon$,legend style={at={(1,0)},anchor=south east},ylabel=number of vertices, yticklabel = {
    \pgfmathparse{\tick/1000}
    \pgfmathprintnumber{\pgfmathresult}\,k
}]
\addplot table [x=eps, y=verts_ga, col sep=comma] {ave_fig_4.csv};
\addlegendentry{SCA}
\addplot [color=red,mark=o,red] table [x=eps, y=verts_addm, col sep=comma] {ave_fig_4.csv};
\addlegendentry{ADDM}
\end{axis}
\end{tikzpicture}
\end{center}
\caption{Left: Runtime of the shortcut algorithm (SCA) compared to the runtime of the approximate double description method (ADDM) for Example \ref{ex1} in dependence of the tolerances. Right: Number of vertices computed by the shortcut algorithm (SCA) compared to the number of vertices computed by the approximate double description method (ADDM) for Example \ref{ex1} in dependence of the tolerances. For $\varepsilon \in \{10^{-3},10^{-3.25},10^{-3.5},\dots,10^{-14}\}$ (not displayed here) the runtime of both algorithms is nearly constant and the number of vertices is exactly constant.}\label{ave_fig_3}
\end{figure}

In Figure \ref{ave_fig_2a} we see that larger tolerances $\varepsilon$ can leads to simpler approximations. Figure \ref{ave_fig_3} left shows a runtime comparison of both algorithms. For Example \ref{ex1}, the approximate double description method is slightly faster than the shortcut algorithm. We also see in Figure \ref{ave_fig_3} left that the coarser the approximation is the less computational time is required by both algorithms. An advantage of the shortcut algorithm can be seen in Figure \ref{ave_fig_3} right. For larger tolerances it computes strictly less vertices than the approximate double description method (recall that the vertices computed by the shortcut algorithm are always a subset of the vertices computed by the approximate double description method).

\begin{figure}[hpt]
	\begin{center}
		\includegraphics[width =.22\textwidth]{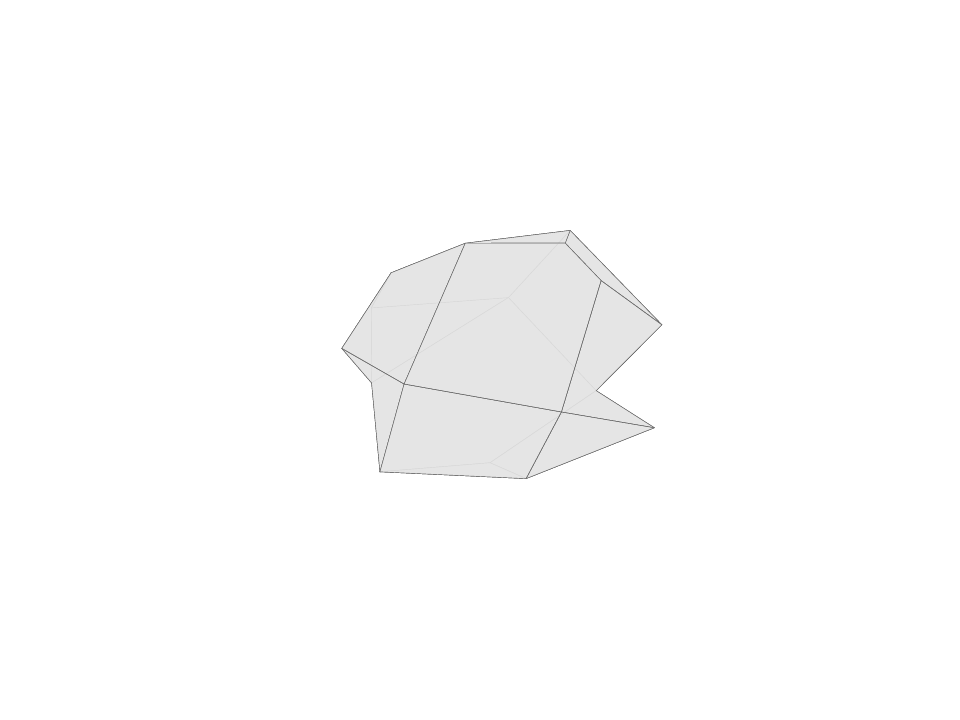}
		\hspace{.5cm}
		\includegraphics[width =.22\textwidth]{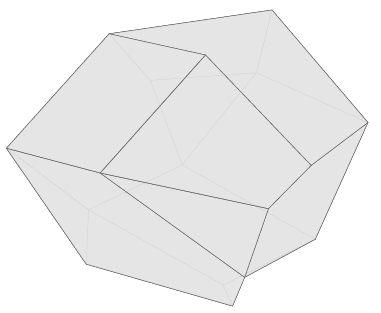}
		\hspace{.5cm}
		\includegraphics[width =.22\textwidth]{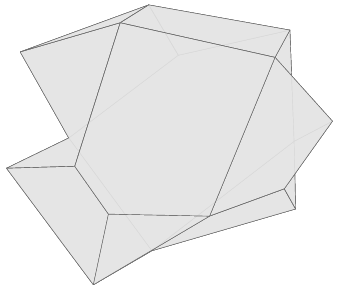}
	\end{center}
\caption{The polytope of Example \ref{ex1} computed by the shortcut algorithm with a huge tolerance $\varepsilon=2$ from different viewpoints in order to visualize the non-convex nature of the construction.}\label{ave_fig_2b}
\end{figure}

Figure \ref{ave_fig_2b} visualizes the fact that faces computed by the shortcut algorithm are bounded by skew polygons, i.e.\ they do not necessarily belong to a plane. In general, the algorithm does not produce (convex) polytopes, not even after a triangulation of the faces. Nevertheless the vertices computed by the shortcut algorithm provide an approximate V-representation. Moreover, the visualization of the non-convex objects makes sense for practical reasons as they give us an impression of both the approximation and the original polytope. For small tolerances the resulting objects of the shortcut algorithm are ``close to'' the given convex polytopes.

Let us turn to the second example, which provides a sequence of polytopes with increasing complexity. The polar of a polytope $P$ is defined as $P^\circ \colonequals \{y \in \R^d\mid \forall x \in P:y^T x \leq 1\}$.

\begin{example}\label{ex2}
	A sequence of polytopes $P_i$ is defined recursively. $P_0$ is a regular simplex in $\R^3$ with edge length $1$ symmetrically placed around the origin. $P_i$ is defined as the Minkowski sum of $P_{i-1}$ and the polar of $P_{i-1}$. Figure \ref{ave_fig_7} shows some of the polytopes and Figure \ref{ave_fig_1} shows some  approximations computed by the shortcut algorithm. 
\end{example}

\begin{figure}[hpt]
	\begin{center}
	\includegraphics[width =.22\textwidth]{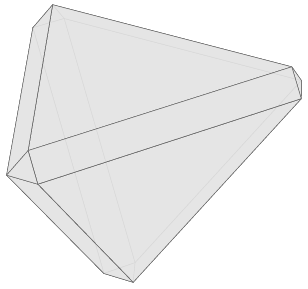}
	\includegraphics[width =.22\textwidth]{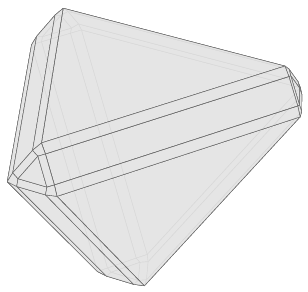}
	\includegraphics[width =.22\textwidth]{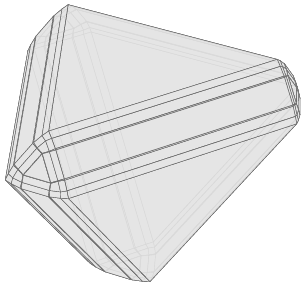}
	\includegraphics[width =.22\textwidth]{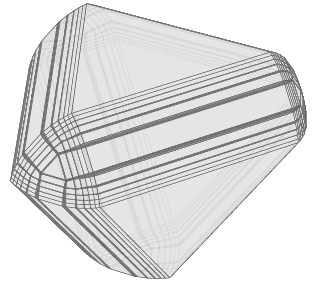}
	\end{center}
\caption{The polytopes $P_1$, $P_2$, $P_3$ and $P_6$ (from left to right) from Example \ref{ex2}. Approximations of $P_6$ can be seen in Figure \ref{ave_fig_1}}\label{ave_fig_7}
\end{figure}

\begin{figure}[hpt]
	\begin{center}
	\includegraphics[width =.22\textwidth]{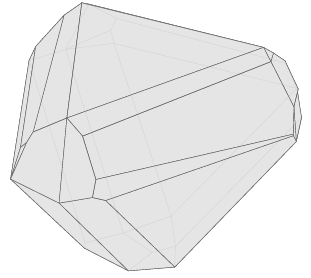}
	\includegraphics[width =.22\textwidth]{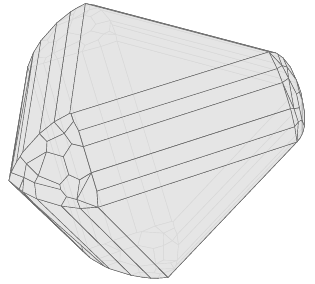}
	\includegraphics[width =.22\textwidth]{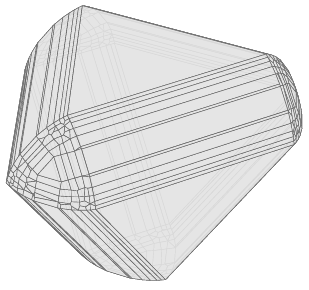}
	\includegraphics[width =.22\textwidth]{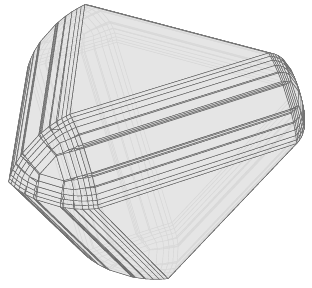}
	\end{center}
\caption{Approximations of $P_6$ from Example \ref{ex2} computed by the shortcut algorithm. The tolerance was chosen as $\varepsilon \in \{10^{-1},10^{-2},10^{-3},10^{-4}\}$ (from left to right).}\label{ave_fig_1}
\end{figure}

In Figure \ref{ave_fig_6} left we see that for Example\ref{ex2} the approximate double description method is slower than the shortcut algorithm for certain tolerances. Also, the approximate double description method computes way too many vertices for some tolerances, see Figure \ref{ave_fig_6} center. While the shortcut algorithm works well for arbitrary tolerances, the approximate double description method produces satisfactory results only for sufficiently small tolerances.

Finally, in Figure \ref{ave_fig_6} right we observe the runtime of both algorithms in case of increasing complexity. Since $\varepsilon >0$ was chosen small enough, the approximate double description method performs slightly better than the shortcut algorithm. 

A further advantage of the shortcut algorithm is that the results can easily be visualized by using the planar graph stored as a half-edge data structure and the coordinate function while further computational steps are necessary for a similar task for the approximate double description method.

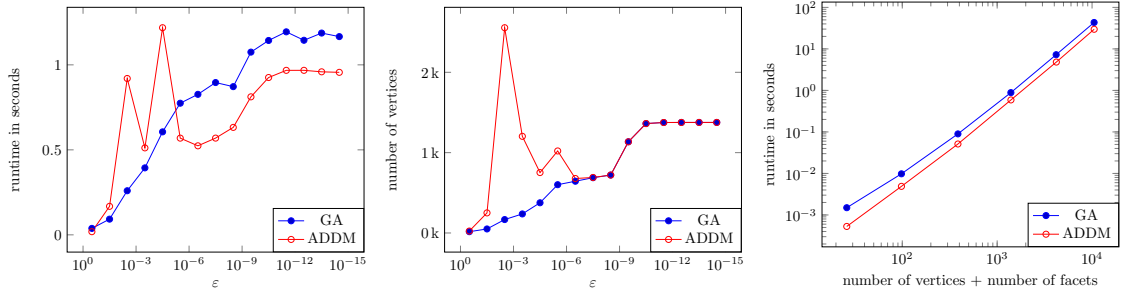
\begin{figure}[hpt]
	\begin{center}
\begin{tikzpicture}[scale=.57]
\begin{axis}[xmode=log, x dir=reverse,
  xlabel=$\varepsilon$,legend style={at={(1,0)},anchor=south east},ylabel=runtime in seconds]
\addplot table [x=eps, y=time_ga, col sep=comma] {ave_fig_6a.csv};
\addlegendentry{SCA}
\addplot [color=red,mark=o,red] table [x=eps, y=time_addm, col sep=comma] {ave_fig_6b.csv};
\addlegendentry{ADDM}
\end{axis}
\end{tikzpicture}
\begin{tikzpicture}[scale=.57]
\begin{axis}[xmode=log, x dir=reverse,
  xlabel=$\varepsilon$,legend style={at={(1,0)},anchor=south east},ylabel=number of vertices, yticklabel = {
    \pgfmathparse{\tick/1000}
    \pgfmathprintnumber{\pgfmathresult}\,k
}]
\addplot table [x=eps, y=verts_ga, col sep=comma] {ave_fig_6a.csv};
\addlegendentry{SCA}
\addplot [color=red,mark=o,red] table [x=eps, y=verts_addm, col sep=comma] {ave_fig_6b.csv};
\addlegendentry{ADDM}
\end{axis}
\end{tikzpicture}
\begin{tikzpicture}[scale=.57]
\begin{axis}[xmode=log,ymode=log,
  xlabel=number of vertices + number of facets,legend style={at={(1,0)},anchor=south east},ylabel=runtime in seconds]
\addplot table [x=verts_ga, y=time_ga, col sep=comma] {ave_fig_5.csv};
\addlegendentry{SCA}
\addplot [color=red,mark=o,red] table [x=verts_addm, y=time_addm, col sep=comma] {ave_fig_5.csv};
\addlegendentry{ADDM}
\end{axis}
\end{tikzpicture}
\end{center}
\caption{Computational results for Example \ref{ex2}. Left: Runtime comparison for the polytope $P_4$ with various tolerances. The shortcut algorithm (SCA) performs better for some (larger) tolerances. The approximate double description method is very slow for certain tolerances. For instance, for $\varepsilon=10^{-4}$, which is not displayed here as we have chosen $\varepsilon \in \{10^{-0.5},10^{-1.5},\dots,10^{-14.5}\}$, it took $5013$ seconds and the result had $810$ vertices. The reason for this long runtime is that extremely many vertices were computed in intermediate steps. The approximate double description method worked well for sufficiently small tolerances. Center: Comparison of the number of vertices computed by both algorithms for the polytope $P_4$ with various tolerances. The approximate double description method computes way too many vertices for some (larger) tolerances. For instance, for $\varepsilon=10^{-3}$, which is not displayed here, the result had $9456$ vertices and was computed in $3.4$ seconds. Right: The runtime of both algorithms for $\varepsilon = 10^{-9}$ is shown in dependence of the ``size'' (number of vertices + number of faces) of the polytopes $P_1,P_2,\dots,P_6$ (from left to right).}\label{ave_fig_6}
\end{figure}

\section{Conclusions, open questions and comments}

The approximate vertex enumeration problem was introduced and motivated. Two solution methods for dimension $d \in \{2,3\}$ were introduced, were shown to be correct and tested by numerical examples, the approximate double description method and the shortcut algorithm. Both methods remain correct when imprecise arithmetic is used and the computational error is sufficiently small.

While the approximate double description method was formulated for arbitrary dimension, the shortcut algorithm makes sense only for dimension $d \in \{2,3\}$, because planarity of the constructed graph is utilized.

The approximate double description method was shown to be correct also for dimension $d \geq 4$ if an  impracticable assumption is satisfied in each iteration. The following questions remain open:
\begin{enumerate}[(1)]
	\item  What is the smallest dimension $d$ (if there is any) such that the approximate double description method fails (without any additional assumption)? 
	\item Is there any (other) ``practically relevant'' (compare Remark \ref{rem_trivial}) solution method for the approximate vertex enumeration problem for dimension $d\geq 4$?
	\item  How reliable are vertex enumeration methods using imprecise arithmetic for dimension $d\geq 4$?
\end{enumerate}

\bigskip\noindent {\bf Acknowledgements.} The author thanks Michael Joswig for inspiring to this research and Benjamin Wei{\ss}ing and David Hartel for the interesting discussions on the subject.


\end{document}